\numberwithin{equation}{section}
\numberwithin{figure}{section}
\newtheorem{theorem}{Theorem}[section]
\newtheorem{proposition}[theorem]{Proposition}
\newtheorem{lemma}[theorem]{Lemma}
\newtheorem{cor}[theorem]{Corollary}
\theoremstyle{definition}
\newtheorem{remark}[theorem]{Remark}
\definecolor{myblue}{rgb}{0.6, 0.9, 1}
\newcommand{\Rmnum}[1]{\expandafter\@slowromancap\romannumeral #1@}
\definecolor{myblue}{rgb}{0.6, 0.9, 1}
\definecolor{mygreen}{rgb}{0,0,1}
\definecolor{purple}{rgb}{0.6,0.2,1}
\definecolor{orange}{rgb}{0.8,0,0.2}
\newcommand{\bC}{\mathbb{C}}
\newcommand{\bP}{\mathbb{P}}
\newcommand{\C}{\mathbb{C}}
\newcommand{\bZ}{\mathbb{Z}}
\newcommand{\bQ}{\mathbb{Q}}
\newcommand{\bR}{\mathbb{R}}
\newcommand{\R}{\mathbb{R}}
\newcommand{\bN}{\mathbb{N}}
\newcommand{\mcD}{\mathcal{D}}
\newcommand{\mcB}{\mathcal{B}}
\newcommand{\mcE}{\mathcal{E}}
\newcommand{\ord}{\operatorname{ord}}
\newcommand{\eps}{\varepsilon}
\newcommand{\la}{\lambda}
\newcommand{\pr}{\mathbb{P}}
\newcommand{\Tor}{\operatorname{Tor}}
\newcommand{\Gal}{\operatorname{Gal}}
\newcommand{\Kbar}{\overline{K}}
\newcommand{\Qbar}{\overline{\bQ}}
\newcommand{\kbar}{\overline{k}}
\newcommand{\supp}{\operatorname{supp}}
\newcommand{\<}{\langle}
\renewcommand{\>}{\rangle}
\newcommand\Lbar {\overline{L}}
\newcommand\Mbar {\overline{M}}
\newcommand\Dbar {\overline{D}}
\newcommand{\rank}{\operatorname{rank}}
\newcommand\volhat{\widehat{\mathrm{vol}}}
\newcommand\iso{\simeq}
\newcommand\essmin{\mathrm{ess.min.}}
\newcommand\Stilde{\widetilde{S}}
\newcommand\Hyp{\mathbb{H}}
\newcommand\an{\mathrm{an}}
\newcommand\vol{\operatorname{vol}}
\newcommand\deghat{\widehat{\deg}}
\begin{document}
	\title{%Elliptic surfaces and arithmetic equidistribution for $\bR$-divisors 
	Elliptic surfaces and intersections of adelic $\R$-divisors}
	
	\author{Laura De Marco and Niki Myrto Mavraki}
	\email{demarco@math.harvard.edu}
	\email{mavraki@math.harvard.edu}
	
	\date{\today}
	
\begin{abstract}
Suppose $\mcE \to B$ is a non-isotrivial elliptic surface defined over a number field, for smooth projective curve $B$.  Let $k$ denote the function field $\Qbar(B)$ and $E$ the associated elliptic curve over $k$.  In this article, we construct adelically metrized $\R$-divisors $\Dbar_X$ on the base curve $B$ over a number field, for each $X \in E(k)\otimes \R$.  We prove non-degeneracy of the Arakelov-Zhang intersection numbers $\Dbar_X\cdot\Dbar_Y$, as a biquadratic form on $E(k)\otimes \R$.  As a consequence, we have the following Bogomolov-type statement for the N\'eron-Tate height functions on the fibers $E_t(\Qbar)$ of $\mcE$ over $t \in B(\Qbar)$:   given points $P_1, \ldots, P_m \in E(k)$ with $m\geq 2$, there exist an infinite sequence $\{t_n\}\subset B(\Qbar)$  and small-height perturbations $P_{i,t_n}' \in E_{t_n}(\Qbar)$ of specializations $P_{i,t_n}$ so that the set $\{P_{1, t_n}', \ldots, P_{m,t_n}'\}$ satisfies at least {\em two} independent linear relations for all $n$, if and only if the points $P_1, \ldots, P_m$ are linearly dependent in $E(k)$.  This gives a new proof of results of Masser and Zannier \cite{Masser:Zannier, Masser:Zannier:2} and of Barroero and Capuano \cite{Barroero:Capuano} and extends our earlier results \cite{DM:variation}.  In the Appendix, we prove an equidistribution theorem for adelically metrized $\R$-divisors on projective varieties (over a number field) using results of Moriwaki \cite{Moriwaki:Memoir}, extending the equidistribution theorem of Yuan \cite{Yuan:equidistribution}.
\end{abstract}
	
\maketitle
	%\tableofcontents

	%%%%%%%%
\bigskip
\section{Introduction}
Suppose $\mcE \to B$ is an elliptic surface defined over a number field $K$.  That is, $\mcE$ is a projective surface, $B$ is a smooth projective curve, and there exists a section $O: B\to \mcE$, all defined over $K$, so that all but finitely many fibers $E_t$, for $t\in B(\overline{K})$, are smooth elliptic curves with zero $O_t$. We say that the elliptic surface $\mcE \to B$ is isotrivial if all of the smooth fibers $E_t$ are isomorphic over $\Kbar$.  Let $k$ denote the function field $\Kbar(B)$; we also view the surface $\mcE$ as an elliptic curve $E$ over the field $k$.  
	
In this article, we study the geometry and arithmetic of the set $E(k)$ of rational points over the function field $k$ when $\mcE \to B$ is not isotrivial.  To this end, we consider height functions associated to adelically metrized $\R$-divisors on the base curve $B$ over the number field $K$.   We study the Arakelov-Zhang intersection of these metrized $\R$-divisors and prove that it induces a non-degenerate biquadratic form on $E(k) \otimes \R$.  We relate this theorem to existing results, and provide, for example, a new proof of results of Masser and Zannier and of Barroero and Capuano on linear relations between specializations of independent sections.

\subsection{Heights and the Arakelov-Zhang intersection of points in $E(k)$}  Assume that $\mcE\to B$ is not isotrivial.  Let $\hat{h}_E$ denote the  N\'eron-Tate canonical height on $E(\kbar)$, associated to the choice of divisor $O$ on $E$; let $\hat{h}_{E_t}$ denote the corresponding canonical height on the smooth fibers $E_t(\Kbar)$ for (all but finitely many) $t\in B(\Kbar)$.  By non-isotriviality, a point $P \in E(k)$ satisfies $\hat{h}_E(P) =0$ if and only if it is torsion on $E$.  We denote the specializations of $P$ by $P_t$ in the fiber $E_t$.  Tate showed in \cite{Tate:variation} that the canonical height function 
\begin{equation} \label{height function}
	h_P(t) := \hat{h}_{E_t}(P_t)
\end{equation}
is a Weil height on the base curve $B(\Kbar)$, up to a bounded error.  More precisely, there exists a $\bQ$-divisor $D_P$ on $B$ of degree equal to $\hat{h}_E(P)$ so that $h_P(t) = h_{D_P}(t) + O(1)$, where $h_{D_P}$ is a Weil height on $B(\Kbar)$ associated to $D_P$. In \cite{DM:variation}, we showed that we can also understand the small values of the function \eqref{height function}, from the point of view of equidistribution.  Assume that $\hat{h}_E(P)>0$ (so that the function $h_P$ is nontrivial) and that, as a section, $P: B\to \mcE$ is defined over the number field $K$.   Building on work of Silverman \cite{Silverman:VCHI, Silverman:VCHII, Silverman:VCHIII}, we showed that $h_P$ is the height induced by an ample line bundle on $B$ (with divisor $D_P$) equipped with a continuous, adelic metric of non-negative curvature defined over $K$, denoted by $\Dbar_P$ and satisfying 
	$$\Dbar_P \cdot \Dbar_P = 0$$
for the Arakelov-Zhang intersection number introduced in \cite{Zhang:adelic}.  In particular, we can then apply the equidistribution theorems of \cite{ChambertLoir:equidistribution, Thuillier:these, Yuan:equidistribution} to deduce that the $\Gal(\Kbar/K)$-orbits of points $t_n\in B(\Kbar)$ with height $h_P(t_n) \to 0$ are uniformly distributed on $B(\bC)$ with respect to the curvature distribution $\omega_P$ for $\Dbar_P$ at an archimedean place of $K$.  A similar equidistribution occurs at each place $v$ of $K$ to a measure $\omega_{P,v}$ on the Berkovich analytification $B_v^{\an}$ \cite[Corollary 1.2]{DM:variation}. 
	
As a consequence of our main result in \cite{DM:variation}, and combined with the results of Masser and Zannier \cite{Masser:Zannier, Masser:Zannier:2}, we have 
\begin{eqnarray} \label{nondegenerate}
	\Dbar_P \cdot \Dbar_Q \geq 0 &\mbox{ for all } &  P, Q \in E(k), \mbox{ and } \\
	\Dbar_P \cdot \Dbar_Q = 0 
	&\iff& \mbox{either } P \mbox{ or } Q \mbox{ is torsion, or }  \nonumber \\
	&&  \exists \; \alpha >0 \mbox{ such that } h_P(t) = \alpha \, h_Q(t) \mbox{ for all } t \in B(\Kbar) \nonumber \\
	&\iff&  \exists \; (n,m) \in \bZ^2 \setminus\{(0,0)\} \mbox{ such that } n P = m Q  \nonumber
\end{eqnarray}
In particular, as the N\'eron-Tate bilinear form $\<P,Q\>_E := \frac12 \left( \hat{h}_E(P+Q) - \; \hat{h}_E(P) - \hat{h}_E(Q)\right)$ is positive definite on $E(k)\otimes \R$, we have 
\begin{equation}\label{NT 0}
	\Dbar_P \cdot \Dbar_Q = 0 \quad \iff \quad \hat{h}_E(P) \hat{h}_E(Q) = \<P,Q\>_E^2
\end{equation}
for all $P, Q\in E(k)$.  
	
The main result of this article is the proof of a stronger version of \eqref{NT 0}:
	
\begin{theorem}  \label{Lambda}  
Let $\mcE \to B$ be a non-isotrivial elliptic surface defined over a number field $K$.  Let $E$ be the corresponding elliptic curve over the field $k = \Kbar(B)$.  There exists a constant $c >0$ so that 
	$$c \left( \hat{h}_E(P) \hat{h}_E(Q) - \<P,Q\>_E^2\right) \; \leq \; \Dbar_P \cdot \Dbar_Q \; \leq \; c^{-1} \left( \hat{h}_E(P) \hat{h}_E(Q) - \<P,Q\>_E^2\right)$$
for all $P, Q \in E(k)$, where $\< \cdot, \cdot \>_E$ is the N\'eron-Tate bilinear form on $E(k)$.   
\end{theorem}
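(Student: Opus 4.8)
The plan is to view both sides as continuous biquadratic forms on the finite-dimensional real vector space $V:=E(k)\otimes\R$ and to compare them by a compactness argument on $\bP(V)\times\bP(V)$. Set $\Psi(P,Q):=\hat h_E(P)\hat h_E(Q)-\<P,Q\>_E^2$ and $\Phi(P,Q):=\Dbar_P\cdot\Dbar_Q$. First I would record that $X\mapsto\Dbar_X$ is \emph{quadratic} — it satisfies $\Dbar_{mX}=m^2\Dbar_X$ and the parallelogram law, since the fibral heights $h_X(t)=\hat h_{E_t}(X_t)$ are exactly quadratic in $X$ and the metrics are canonical — so it extends to a quadratic map on all of $V$; then, by the continuity of the Arakelov--Zhang pairing for adelically metrized $\R$-divisors established earlier, $\Phi$ is a continuous biquadratic form on $V\times V$, as is $\Psi$ trivially. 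Because both forms are biquadratic, it suffices to prove the inequality on $E(k)\otimes\bQ$, and hence, by density and continuity, it is equivalent to prove it on all of $V$.

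Next I would identify the common zero locus. Non-isotriviality makes $\hat h_E$ positive definite on $V$, so $\Psi\ge 0$ with $\Psi(P,Q)=0$ precisely when $P\wedge Q=0$ (Cauchy--Schwarz). By \eqref{nondegenerate}--\eqref{NT 0} the same holds for $\Phi$ on $E(k)\times E(k)$; to upgrade this to $V\times V$ I would use the equality case of the arithmetic Hodge index theorem for integrable metrized $\R$-divisors on $B$: the degree-zero class $\hat h_E(Q)\Dbar_P-\hat h_E(P)\Dbar_Q$ has self-intersection $-2\,\hat h_E(P)\hat h_E(Q)\,\Phi(P,Q)$, so $\Phi(P,Q)=0$ forces $\hat h_E(Q)h_P(t)=\hat h_E(P)h_Q(t)$ for all $t\in B(\Kbar)$, and the rigidity of proportional fibral height functions (the content of \eqref{NT 0}, extended to $V$) then yields $P\wedge Q=0$. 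Thus $\{\Phi=0\}=\{\Psi=0\}=\{P\wedge Q=0\}$ in $V\times V$. This is the step I expect to be the main obstacle: for irrational $P$ one cannot simply pass to the limit from rational points, since the positive values $\Phi(P_j,Q_j)>0$ along independent rational $P_j,Q_j$ could a priori degenerate to $0$, and controlling this requires the Hodge index rigidity together with a careful analysis of when fibral height functions remain proportional in a limit.

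Granting the above, for each $P\ne 0$ the quadratic forms $Q\mapsto\Phi(P,Q)$ and $Q\mapsto\Psi(P,Q)$ on $V$ are positive semidefinite with radical exactly $\R P$, hence descend to positive definite forms on $W_P:=V/\R P$. Letting $c_-(P)\le c_+(P)$ be the extreme generalized eigenvalues of this pair, we obtain $c_-(P)\,\Psi(P,Q)\le\Phi(P,Q)\le c_+(P)\,\Psi(P,Q)$ for all $Q$, with $c_\pm$ depending only on $[P]\in\bP(V)$. To finish I would show $[P]\mapsto c_\pm(P)$ is continuous — the forms $\Phi(P,\cdot)$, $\Psi(P,\cdot)$ and their common radical $\R P$ vary continuously with $P$, and trivializing the family by the $\hat h_E$-orthogonal complement of $\R P$ reduces to the standard continuity of generalized eigenvalues of pairs of positive definite forms — so by compactness of $\bP(V)$ the quantities $\sup_{[P]}c_+(P)$ and $\inf_{[P]}c_-(P)$ are finite and positive, and $c:=\min\!\bigl(1,\inf_{[P]}c_-(P)\bigr)$ works. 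This last step is soft once the $\R$-divisor intersection formalism and the quadraticity of $X\mapsto\Dbar_X$ from the earlier sections are available.
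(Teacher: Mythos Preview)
Your reduction to non-degeneracy on $V=E(k)\otimes\R$ via compactness is exactly the paper's strategy (this is the equivalence $(1)\Leftrightarrow(4)$ in Theorem~\ref{TFAE}), and your use of the Hodge index theorem to pass from $\Phi(X,Y)=0$ to $\hat h_E(Y)\,h_X(t)=\hat h_E(X)\,h_Y(t)$ for all $t$ is also correct. The gap is precisely the step you flag as ``the main obstacle'': you assert that ``the rigidity of proportional fibral height functions (the content of \eqref{NT 0}, extended to $V$)'' yields $X\wedge Y=0$, but you give no mechanism for this extension. The statement \eqref{NT 0} for $P,Q\in E(k)$ rests on Masser--Zannier, which is a transcendence/o-minimality result about genuine sections; it does not pass to $E(k)\otimes\R$ by continuity, since---as you yourself note---positive values $\Phi(P_j,Q_j)$ along rational approximants can degenerate to $0$. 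Saying it ``requires a careful analysis of when fibral height functions remain proportional in a limit'' identifies the difficulty but does not resolve it.

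In the paper this step is the entire content of Theorem~\ref{measures}: from $\Dbar_X\cdot\Dbar_Y=0$ and Hodge index one gets equality of the archimedean curvature measures $\omega_X=\omega_Y$, and then $X=\pm Y$ is forced by a transcendence argument that has no analogue in your outline. Concretely, one writes $\omega_X=dx_X\wedge dy_X$ in Betti coordinates, applies the holomorphic--antiholomorphic trick of Andr\'e--Corvaja--Zannier to the resulting identity to produce a polynomial relation among the holomorphic lifts $\xi_{P_j},\xi_{P_j}',\tau,\tau'$ of a basis of $E(k)\otimes\R$, and then invokes Bertrand's algebraic-independence theorem to conclude the relation is trivial, hence the real coefficient vectors of $X$ and $Y$ agree up to sign. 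Without some input of this kind---transcendence of periods, or an equivalent rigidity statement for the Betti map---your argument stalls at ``$h_X$ and $h_Y$ are proportional'' and cannot reach $X\wedge Y=0$.
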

	
\noindent
The upper bound on $\Dbar_P \cdot \Dbar_Q$ in Theorem \ref{Lambda} is relatively straightforward.  The difficulty lies in the lower bound; in Section \ref{equivalences}, we observe that this is equivalent to proving that $\Dbar_X \cdot \Dbar_Y > 0$ for all independent $X, Y \in 
E(k)\otimes \R$.

\subsection{Motivation and context}
Theorem \ref{Lambda} was inspired by the statements and proofs of the Bogomolov Conjecture \cite{Ullmo:Bogomolov, Zhang:Bogomolov, Szpiro:Ullmo:Zhang}, extending Raynaud's theorem that settled the Manin-Mumford Conjecture \cite{Raynaud:1}, and the ``Mordell-Lang plus Bogomolov" results of Poonen \cite{Poonen:bogplus} and Zhang \cite{Zhang:bogplus}, in the spirit of the conjectures of Pink \cite{Pink} and Zilber \cite{Zilber}.   Moreover, as we will explain in Section \ref{equivalences}, we view Theorem \ref{Lambda} as an analog of Zhang's Conjecture \cite[\S4]{Zhang:ICM}; the conjecture was formulated for families of abelian varieties $\mathcal{A} \to B$ of relative dimension $>1$ and does not hold as stated for elliptic surfaces \cite[\S4 Remark 3]{Zhang:ICM}.  (See \cite{Zannier:book} for background and additional references.)
	
Theorem \ref{Lambda} can be seen as a Bogomolov-type bound.  The intersection number $\Dbar_P \cdot \Dbar_Q$ is related to the small values of the heights $\hat{h}_{E_t}(P_t) + \hat{h}_{E_t}(Q_t)$ in the fibers $E_t(\Kbar)$.  Indeed, as a consequence of Zhang's Inequality \cite[Theorem 1.10]{Zhang:adelic} applied to the sum $\Dbar_P + \Dbar_Q$, and the fact that $h_P(t) \geq 0$ at all points $t\in B(\Kbar)$ for every $P \in E(k)$ \cite[Proposition 4.3]{DM:variation}, we have 
\begin{equation} \label{Zhang for the sum}
	\frac12 \; \essmin \left( h_P + h_Q\right) \; \leq \;  \frac{ \Dbar_P \cdot \Dbar_Q }{\hat{h}_E(P)+\hat{h}_E(Q)} \; \leq  \;  \essmin \left( h_P + h_Q\right)
\end{equation} 
for every pair of non-torsion $P, Q \in E(k)$.  The essential minimum is defined by $\essmin(f) = \sup_F \inf_{x \in B\setminus F} f(x)$ over all finite sets $F$ in $B(\Kbar)$. Bogomolov-type bounds have found many applications in problems of unlikely intersections.  In Section \ref{equivalences}, we explain that Theorem \ref{Lambda} is equivalent to the following:
	
\begin{theorem}  \label{scheme} 
Let $\mcE\to B$ be a non-isotrivial elliptic surface defined over a number field, and let $\pi: \mathcal{E}^m \to B$ be its $m$-th fibered power with $m\geq 2$.  Let $\mathcal{E}^{m,\{2\}}$ denote the union of flat subgroup schemes of $\mathcal{E}^m$ of codimension at least 2, and consider the tubular neighborhood
	$$T(\mcE^{m,\{2\}},\epsilon)=\left\{P\in\mathcal{E}^m(\Qbar) ~:~ \exists P'\in \mathcal{E}^{m,\{2\}}(\overline{\mathbb{Q}}) \mbox{ with }\pi(P) = \pi(P') \mbox{ and } \hat{h}_{\mcE^m_{\pi(P)}}(P-P') \leq \epsilon\right\}.$$
Then, for any irreducible curve $C$ in $\mathcal{E}^m$, defined over a number field and dominating $B$, there exists $\epsilon>0$ such that 
	$$C\; \cap \; T(\mathcal{E}^{m,\{2\}},\epsilon)$$ 
is contained in a finite union of flat subgroup schemes of positive codimension in $\mcE^m$.
\end{theorem}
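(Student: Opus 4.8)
We prove Theorem \ref{scheme} granting Theorem \ref{Lambda}; the reverse implication is analogous, using the equidistribution theorem for adelically metrized $\R$-divisors proved below. If $C$ is contained in a flat subgroup scheme of $\mathcal{E}^m$ of positive codimension there is nothing to prove, so assume it is not. Passing to the normalization of $C$ and base changing, the inclusion $C\hookrightarrow\mathcal{E}^m$ over $B$ produces an $m$-tuple $P_1,\dots,P_m\in E(k_C)$, where $k_C:=\Qbar(C)$ and (abusing notation) $E$ now denotes the base change of the original $E$ to $k_C$, a non-isotrivial elliptic curve over a number field. Since $\mathrm{End}(E)=\bZ$ (by non-isotriviality), our assumption amounts to the images of $P_1,\dots,P_m$ in $E(k_C)\otimes\R$ being $\R$-linearly independent, as any positive-codimension flat subgroup scheme is contained in one of codimension one, and $C$ lies in a codimension-one flat subgroup scheme precisely when some nonzero integer combination of the $P_i$ is torsion. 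As every positive-codimension flat subgroup scheme meets the irreducible curve $C$ in a finite set, it is enough to prove that $C\cap T(\mathcal{E}^{m,\{2\}},\epsilon)$ is finite for some $\epsilon>0$; the passage from this finiteness to the stated form is by the arguments of Masser--Zannier and Barroero--Capuano \cite{Masser:Zannier,Masser:Zannier:2,Barroero:Capuano}.

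\textbf{Extracting small-height real sections.} Suppose for contradiction that $C\cap T(\mathcal{E}^{m,\{2\}},\epsilon)$ is infinite for every $\epsilon>0$. Choose $P_n\in C(\Qbar)$ with $t_n:=\pi(P_n)$ pairwise distinct and $\epsilon_n\to0$, and $P_n'\in\mathcal{E}^{m,\{2\}}(\Qbar)$ over $t_n$ with $\hat h_{E_{t_n}^m}(P_n-P_n')\le\epsilon_n$. Let $\Lambda_n\subset\bZ^m$ be the rank-$\ge2$ lattice of integral relations cutting out a codimension-$\ge2$ flat subgroup scheme through $P_n'$, take $a^{(n)}$ a shortest nonzero vector of $\Lambda_n$ and $b^{(n)}$ a shortest vector of $\Lambda_n$ independent of $a^{(n)}$, so that $\|a^{(n)}\|\le\|b^{(n)}\|$ and, by the standard reduction estimate $|\langle a^{(n)},b^{(n)}\rangle|\le\tfrac12\|a^{(n)}\|^2$, the angle between them is at least $60^\circ$. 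Writing $X_n:=\sum_i a^{(n)}_iP_i$, $Y_n:=\sum_i b^{(n)}_iP_i$ in $E(k_C)$ and $h_X(t):=\hat h_{E_t}(X_t)$ as in \eqref{height function}, the relation $\sum_i a^{(n)}_i(P_n')_i=O_{t_n}$, the triangle inequality for the N\'eron--Tate norm, and Cauchy--Schwarz give
\[
 h_{X_n}(P_n)=\hat h_{E_{t_n}}\!\Big(\sum_i a^{(n)}_i\big((P_n)_i-(P_n')_i\big)\Big)\ \le\ \|a^{(n)}\|^2\,\hat h_{E_{t_n}^m}(P_n-P_n')\ \le\ \|a^{(n)}\|^2\epsilon_n,
\]
and similarly $h_{Y_n}(P_n)\le\|b^{(n)}\|^2\epsilon_n$. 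Since $h$ is a quadratic form on the fibers, after normalizing we get $h_{\bar X_n}(P_n)\le\epsilon_n$ and $h_{\bar Y_n}(P_n)\le\epsilon_n$, where $\bar X_n:=X_n/\|a^{(n)}\|$ and $\bar Y_n:=Y_n/\|b^{(n)}\|$.

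\textbf{Passing to the limit and conclusion.} After passing to a subsequence, $a^{(n)}/\|a^{(n)}\|\to\bar a$ and $b^{(n)}/\|b^{(n)}\|\to\bar b$ are unit vectors with $|\langle\bar a,\bar b\rangle|\le\tfrac12$, hence linearly independent; since $P_1,\dots,P_m$ are $\R$-independent, $\bar X:=\sum_i\bar a_iP_i$ and $\bar Y:=\sum_i\bar b_iP_i$ are independent, nonzero elements of $E(k_C)\otimes\R$, with $\bar X_n\to\bar X$ and $\bar Y_n\to\bar Y$. The construction carried out in this paper attaches to each $Z\in E(k_C)\otimes\R$ an adelically metrized $\R$-divisor $\Dbar_Z$ with $\Dbar_Z\cdot\Dbar_Z=0$, $\deg\Dbar_Z=\hat h_E(Z)$, inducing the height $h_Z$, and with $Z\mapsto\Dbar_Z$ continuous; hence $h_{\bar X_n}\to h_{\bar X}$ and $h_{\bar Y_n}\to h_{\bar Y}$ uniformly on $C(\Qbar)$, so $(h_{\bar X}+h_{\bar Y})(P_n)\to0$. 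As the $P_n$ are distinct and $h_{\bar X}+h_{\bar Y}\ge0$, this forces $\essmin(h_{\bar X}+h_{\bar Y})=0$. On the other hand, Zhang's inequality for metrized $\R$-divisors applied to $\Dbar_{\bar X}+\Dbar_{\bar Y}$, together with $\Dbar_{\bar X}\cdot\Dbar_{\bar X}=\Dbar_{\bar Y}\cdot\Dbar_{\bar Y}=0$, yields $\essmin(h_{\bar X}+h_{\bar Y})\ge(\Dbar_{\bar X}\cdot\Dbar_{\bar Y})/(\hat h_E(\bar X)+\hat h_E(\bar Y))$ exactly as in \eqref{Zhang for the sum}, and by Theorem \ref{Lambda} this lower bound is $\ge c\big(\hat h_E(\bar X)\hat h_E(\bar Y)-\langle\bar X,\bar Y\rangle_E^2\big)>0$ since $\bar X,\bar Y$ are independent---a contradiction. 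The heart of the matter is thus Theorem \ref{Lambda}, equivalently the strict positivity $\Dbar_{\bar X}\cdot\Dbar_{\bar Y}>0$ for independent $\R$-sections, together with the $\R$-divisor machinery---construction of $\Dbar_Z$, continuity in $Z$, and Zhang's inequality in this generality, resting on \cite{Moriwaki:Memoir}---that makes it meaningful on curves; within the deduction itself, the only delicate point is choosing the two relations as successive minima of $\Lambda_n$, which is precisely what keeps the limiting sections $\bar X,\bar Y$ independent and the final inequality strict.
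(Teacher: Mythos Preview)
Your approach is essentially the paper's own: base-change so that $C$ becomes a section $(P_1,\dots,P_m)$, extract two independent integral relations along a hypothetical infinite sequence, pass to limiting real points $\bar X,\bar Y\in E(k_C)\otimes\R$, and contradict Theorem~\ref{Lambda} via Zhang's inequality for $\Dbar_{\bar X}+\Dbar_{\bar Y}$. The paper packages this as Proposition~\ref{almost 2 relations} inside the proof of Theorem~\ref{TFAE}; your device of taking $a^{(n)},b^{(n)}$ as successive minima of the relation lattice $\Lambda_n$ (so that $|\langle a^{(n)},b^{(n)}\rangle|\le\tfrac12\|a^{(n)}\|^2$ and hence $|\langle\bar a,\bar b\rangle|\le\tfrac12$ in the limit) is a clean alternative to the paper's coordinate-relabeling for guaranteeing that the limits $\bar X,\bar Y$ remain independent.

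There is, however, a genuine gap in your passage to the limit. The assertion that $h_{\bar X_n}\to h_{\bar X}$ \emph{uniformly on $C(\Qbar)$} is false: by formula~\eqref{associated height}, $h_Z(t)$ is a quadratic polynomial in the coordinates of $Z$ with coefficients $\hat h_{E_t}(P_{i,t})$ and $\hat h_{E_t}(P_{i,t}+P_{j,t})$, and these are unbounded as $t$ ranges over $B(\Qbar)$ (indeed they grow like a Weil height by Tate/Silverman). So continuity of $Z\mapsto\Dbar_Z$ does not by itself give $h_{\bar X_n}(t_n)-h_{\bar X}(t_n)\to 0$. What is missing is the observation that the sequence $\{t_n\}$ has \emph{bounded} Weil height. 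This does follow from your setup: setting $s_{i,n}:=P_{i,t_n}-(P_n')_i$, the points $P_{i,t_n}-s_{i,n}=(P_n')_i$ satisfy an exact integral relation in $E_{t_n}$ and $\max_i\hat h_{E_{t_n}}(s_{i,n})\le\epsilon_n$ is bounded, so Lemma~\ref{one relation} (an application of Silverman specialization) gives $\sup_n h(t_n)<\infty$. Once this is in hand, \cite[Theorem~A]{Silverman:specialization} bounds $\hat h_{E_{t_n}}(P_{i,t_n})$ and $\hat h_{E_{t_n}}(P_{i,t_n}+P_{j,t_n})$ uniformly in $n$, and then the coefficient convergence $\bar X_n\to\bar X$, $\bar Y_n\to\bar Y$ yields $h_{\bar X}(t_n)+h_{\bar Y}(t_n)\to 0$ as you want. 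This is exactly how the paper handles the analogous step in the proofs of Propositions~\ref{small points exist} and~\ref{regulatorvsreal}.
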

	
\noindent
See, e.g., \cite[Lemma 2.2]{Barroero:Capuano} for definitions and a classification of flat subgroup schemes.  Our main result in \cite{DM:variation} treated the intersections of $C$ with the smaller tube $T(\mathcal{E}^{m,\{m\}},\epsilon)$, the torsion subgroups.

The conclusion of Theorem \ref{scheme} with $\epsilon=0$ is a result of Barroero and Capuano \cite[Theorem 2.1]{Barroero:Capuano}:  using techniques involving o-minimality and transcendence theory, similar to those of \cite{Masser:Zannier, Masser:Zannier:2} (which treated the intersections of curves $C$ with  $T(\mathcal{E}^{m,\{m\}},0)$), they show that  $C\,\cap \, T(\mathcal{E}^{m, \{2\}},0)$ is contained in a finite union of flat subgroup schemes of positive codimension.  Thus Theorem \ref{scheme} may be seen as a Bogomolov-type extension of \cite[Theorem 2.1]{Barroero:Capuano}, while providing a new proof of results in \cite{Barroero:Capuano, Masser:Zannier, Masser:Zannier:2}.  The result in \cite{Barroero:Capuano} is extended in \cite{Barroero1,Barroero2} who proved Pink's conjecture \cite[Conjecture 6.1]{Pink} for curves in $\mathcal{E}^m$.  We may also view Theorem \ref{scheme} as a Bogolomolov-type extension of a special case of Pink's conjecture \cite[Conjecture 6.1]{Pink}.  However, Pink's conjecture also considers algebraic subgroups of codimension at least $2$ within fibers having complex multiplication, which we do not treat here, having restricted our study to flat subgroup schemes.

If the elliptic surface $\mathcal{E}\to B$ is isotrivial, the conclusion of Theorem \ref{scheme} with $\epsilon =0$ was established by Viada \cite{ViadaANT} and Galateau \cite{Galateau}.  Moreover, in this isotrivial setting, Viada proved the analogue of Theorem \ref{scheme} (for positive effective $\epsilon$) in \cite[Theorem 1.4]{ViadaIMRN}, \cite[Theorem 1.2]{ViadaANT}, providing in particular new proofs of instances of earlier results by Poonen \cite{Poonen:bogplus} and Zhang \cite{Zhang:bogplus} and extending the work in \cite{Remond:Viada}.  It is worth pointing out that the aforementioned results invoked a different Bogomolov-type bound than the one in Theorem \ref{Lambda}, established by Galateau \cite{Galateau}.  In the case $\epsilon=0$, an analogous statement for curves in constant abelian varieties is established in \cite{HabeggerPila}. The authors use, amongst others, techniques from o-minimality.  In the setting of the multiplicative group $\mathbb{G}^n_m$, Habegger established results of this flavor in arbitrary dimension \cite{Habegger:bogomolov}, generalizing a result of Bombieri-Masser-Zannier \cite{BMZ:1999}.  
	
We remark that the analogues of Theorems \ref{Lambda} and \ref{scheme} can be formulated for arbitrary fiber products of elliptic surfaces over a given base curve $B$, as we did in \cite[Theorem 1.4]{DM:variation}.  For example, Theorem \ref{Lambda} would assert that $\overline{D}_{\mathcal{E},P}\cdot \overline{D}_{\mathcal{F},Q}$ is comparable with $\hat{h}_{E}(P)\hat{h}_{F}(Q)$ if the two non-isotrivial elliptic surfaces $\mathcal{E}\to B$ and $\mathcal{F}\to B$ are not isogenous.  Theorem \ref{scheme} would read exactly the same upon replacing $\mathcal{E}^m$ in the statement with the fibered product $\mathcal{E}_1\times_B\cdots\times_B\mathcal{E}_m$ of any $m\ge 2$ non-isotrivial elliptic surfaces $\mathcal{E}_i\to B$.  Our methods here would yield these results and, in particular, \cite[Theorem 1.1]{BC2} of Barroero and Capuano. We omit them in this article to simplify our exposition.

\subsection{Metrized $\R$-divisors on curves and proof strategy}
For each $t \in B(\Kbar)$ with $E_t$ smooth, the canonical height $\hat{h}_{E_t}$ induces a positive definite quadratic form on $E_t(\Kbar) \otimes \bR$; see e.g. \cite[Ch. VIII, Prop. 9.6]{Silverman:Elliptic}.  The height functions $h_P$ on $B(\Kbar)$, defined by \eqref{height function} for $P \in E(k)$, therefore make sense for elements of the finite-dimensional vector space $E(k)\otimes \bR$.  In Theorem \ref{real point divisor theorem}, we prove that every nonzero element $X \in E(k) \otimes \R$ gives rise to a continuous, adelic, semi-positive metrization $\Dbar_X$ of an ample $\R$-divisor on the base curve $B$, defined over a number field $K$, with height function $h_X(t) = \hat{h}_{E_t}(X_t)$ for $t\in B(\Kbar)$ when $E_t$ is smooth, satisfying $\Dbar_X \cdot \Dbar_X = 0$. 
	
Consequently, we are able to employ results of Moriwaki \cite{Moriwaki:Memoir} in our proof of Theorems \ref{Lambda} and \ref{scheme}.  Specifically, we use his arithmetic Hodge index theorem for adelically metrized $\R$-divisors on curves defined over a number field \cite[Corollary 7.1.2]{Moriwaki:Memoir} to deduce that $\Dbar_X \cdot \Dbar_Y = 0$ for $X, Y \in E(k) \otimes\R$ implies that $\Dbar_X \iso \Dbar_Y$.  As we will show in Section \ref{equivalences}, the proofs of Theorems \ref{Lambda} and \ref{scheme} are then reduced to showing which points $X, Y$ give rise to isomorphic metrized $\R$-divisors on $B$.  
	
To complete the proofs of Theorems \ref{Lambda} and \ref{scheme}, we examine the curvature distributions for $\Dbar_X$.  Fix an embedding of the number field $K$ into $\C$.  In \cite{DM:variation}, the curvature measure $\omega_P$ for the metrized divisor $\Dbar_P$ of $P \in E(k)$, at the given archimedean place, is computed as the pullback by $P$ of a certain $(1,1)$-form on $\mcE(\C)$, via a dynamical construction.  In \cite{CDMZ}, it is shown that $\omega_P = db_1\wedge db_2$ in the Betti coordinates $(b_1, b_2)$ of $P$.   We explain in Section \ref{measure section} that elements $X \in E(k) \otimes \R$ are also represented by holomorphic curves in the surface $\mcE$, and the Betti coordinates of $X$ are real linear combinations of the Betti coordinates of points $P_i \in E(k)$.   We use this to prove that the measure $\omega_X$, at a single archimedean place of the number field $K$, is enough to uniquely determine the pair of points $X$ and $-X$:  
	
\begin{theorem} \label{measures}
Fix $X$ and $Y$ in $E(k) \otimes \R$ and an archimedean place of the number field $K$.  Let $\omega_X$ and $\omega_Y$ denote the curvature distributions on $B(\C)$ at this place for the adelically metrized $\R$-divisors $\Dbar_X$ and $\Dbar_Y$.  Then 
	$$\omega_X = \omega_Y \iff X = \pm Y .$$  
\end{theorem}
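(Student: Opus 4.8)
The plan is to use the description of the curvature measures in terms of Betti coordinates. By the results recalled in Section \ref{measure section} (following \cite{CDMZ}), if $X = x_1 P_1 + \cdots + x_m P_m \in E(k)\otimes\R$ with $P_i \in E(k)$, then $X$ is represented by a holomorphic multisection of $\mcE \to B$ over (the universal cover of) a Zariski-open $U \subset B(\C)$, and its Betti coordinates $(\beta_1, \beta_2)$ are the corresponding real linear combinations $\beta_j = \sum_i x_i b_{i,j}$ of the Betti coordinates $(b_{i,1}, b_{i,2})$ of the $P_i$. The curvature distribution is then $\omega_X = d\beta_1 \wedge d\beta_2$ on $U$, extended by the intersection-theoretic data at the finitely many bad points. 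So the statement $\omega_X = \omega_Y$ becomes an identity of $2$-forms $d\beta_1^X \wedge d\beta_2^X = d\beta_1^Y \wedge d\beta_2^Y$ on $B(\C)$, away from finitely many points, and I must deduce $X = \pm Y$.

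First I would reduce to a local statement. The Betti map, on a simply connected piece of $U$ in a neighborhood of a chosen basepoint $t_0$, gives a real-analytic map $\beta^X = (\beta_1^X, \beta_2^X): V \to \R^2$, and similarly $\beta^Y$; the equality of curvature forms says the Jacobians agree up to sign pointwise, i.e. $\det D\beta^X = \pm \det D\beta^Y$ on $V$ (the sign is constant on $V$ by continuity and the fact that these Jacobians vanish only on a proper analytic subset — this uses non-isotriviality, which forces the Betti map to be a submersion on a dense open set, cf. \cite{CDMZ}). Next I would exploit that both $\beta^X$ and $\beta^Y$ are \emph{affine-linear combinations, with the $x_i$ resp. $y_i$ as coefficients, of the same finite family of multivalued functions $b_{i,j}$}. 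Concretely, writing $b = (b_{1,1}, b_{1,2}, \ldots, b_{m,1}, b_{m,2})$ as a map to $\R^{2m}$ and letting $L_X, L_Y: \R^{2m}\to\R^2$ be the linear maps induced by the coefficient vectors $(x_i)$ and $(y_i)$, we have $\beta^X = L_X \circ b$ and $\beta^Y = L_Y\circ b$. The key input is that the image of $db$ spans a large enough subspace: by the monodromy/transcendence properties of the Betti coordinates of independent points (the same non-degeneracy that underlies, e.g., Theorem \ref{Lambda} and the results of Masser–Zannier), the differentials $db_{i,j}$ at a generic point are "as independent as possible" subject to the relations coming from linear dependences among the $P_i$ in $E(k)$. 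From this, the equality of pulled-back area forms $L_X^*(dz_1\wedge dz_2) = \pm L_Y^*(dz_1\wedge dz_2)$ as forms on the image of $b$ forces $\Lambda^2 L_X = \pm \Lambda^2 L_Y$ as functionals on $\Lambda^2(\mathrm{im}\, db)$, and hence — again using that $\mathrm{im}\, db$ is large — $\Lambda^2 L_X = \pm \Lambda^2 L_Y$ on all of $\Lambda^2 \R^{2m}$ modulo the relation lattice. One then checks that two surjective linear maps $\R^n \to \R^2$ with $\Lambda^2 L_X = \pm\Lambda^2 L_Y$ (equivalently, the same kernel and the same induced area form on $\R^n/\ker$) must be related by $L_X = \pm L_Y$; translated back through the basis $P_1, \ldots, P_m$ this says exactly $X = \pm Y$ in $E(k)\otimes\R$.

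I expect the main obstacle to be making precise the statement that "the differentials of the Betti coordinates are as independent as possible." This is where the non-isotriviality and the structure of $E(k)$ genuinely enter: one needs that if $P_1,\ldots,P_m$ are chosen to be a basis of a free part of the relevant subgroup, then the associated Betti coordinate functions $b_{i,j}$ have differentials that are linearly independent over $\R$ as germs of real-analytic functions at a generic $t_0 \in B(\C)$ (so that pulling back forms along $b$ is injective on the relevant exterior powers). I would establish this either by invoking the Ax–Schanuel / monodromy results used in \cite{CDMZ} and in the Masser–Zannier circle of ideas, or — more in the spirit of this paper — by a bootstrapping argument: if such a linear relation $\sum c_{i,j}\, db_{i,j} \equiv 0$ held, then integrating would give a nonzero element $Z \in E(k)\otimes\R$ whose Betti coordinates are (locally) constant, forcing $h_Z \equiv 0$ on an open set and hence $\Dbar_Z\cdot\Dbar_Z = 0$ with $\omega_Z = 0$, which by Theorem \ref{real point divisor theorem} and the ampleness of $D_Z$ (for $Z\neq 0$) is a contradiction. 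Once this independence is in hand, the rest of the argument is linear algebra on exterior powers and the passage between the "generic holomorphic piece" and the global curvature distribution (handling the finitely many singular fibers, where the masses of $\omega_X$ are determined by local intersection numbers that are themselves quadratic in the coefficients $x_i$ and are pinned down by the smooth part by continuity).
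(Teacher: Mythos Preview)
Your strategy is in the right spirit—reduce $\omega_X=\omega_Y$ to an independence statement for Betti data and then do linear algebra—but the crucial independence step has a genuine gap, and your proposed bootstrap does not close it.

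First, the bootstrap argument is flawed as stated. A relation $\sum_{i,j} c_{i,j}\,db_{i,j}\equiv 0$ says only that the single real-analytic function $\sum c_{i,j} b_{i,j}$ is locally constant. This is \emph{not} the Betti map of any $Z\in E(k)\otimes\R$: for $Z=\sum z_iP_i$ the Betti coordinates are $(\sum z_i b_{i,1},\,\sum z_i b_{i,2})$, so producing a $Z$ with constant Betti map requires two relations with the \emph{same} coefficient vector $(z_i)$, one among the $b_{i,1}$'s and one among the $b_{i,2}$'s. A generic relation $\sum c_{i,j}\,db_{i,j}=0$ is of neither form, so no contradiction with ampleness of $D_Z$ follows. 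Second, even if you did obtain $\R$-linear independence of the germs $db_{i,j}$, this does not imply independence of the wedge products $db_{i,1}\wedge db_{j,2}$ (which is what your $\Lambda^2$ argument actually consumes): on a real $2$-dimensional base, independence of $1$-form germs does not propagate to $2$-form germs.

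The paper supplies exactly the missing ingredients. The identity $dx_1\wedge dy_1=dx_2\wedge dy_2$ is rewritten, via $b_{i,j}=$ (rational expressions in $\xi_i,\bar\xi_i,\tau,\bar\tau$), as a polynomial relation mixing holomorphic and antiholomorphic data. The holomorphic--antiholomorphic trick of Andr\'e--Corvaja--Zannier then separates variables: the resulting function $F(z,w)$ is holomorphic on $\Stilde\times\Stilde$ and vanishes on $\{w=\bar z\}$, hence identically, so specializing $w=w_0$ yields a purely holomorphic polynomial relation among $\xi_{P_1},\dots,\xi_{P_m},\xi_{P_1}',\dots,\xi_{P_m}',\tau,\tau'$. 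Bertrand's transcendence theorem \cite[Th\'eor\`eme 5]{Bertrand} forces this relation to be trivial (since the $P_i$ are independent), and reading off one coefficient gives $\xi_1\xi_1'\equiv\xi_2\xi_2'$, whence $a_{1,j}a_{1,\ell}=a_{2,j}a_{2,\ell}$ for all $j,\ell$ and $X=\pm Y$. Your Ax--Schanuel alternative would in principle substitute for Bertrand, but you would still need the holomorphic--antiholomorphic separation (or an equivalent device) to get from a real-analytic identity on a curve to a usable algebraic relation; this step is not visible in your sketch.
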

	
We are grateful to Lars K\"uhne for helping us with the proof of Theorem \ref{measures}; we use the holomorphic-antiholomorphic trick of Andr\'e, Corvaja, and Zannier \cite[\S5]{ACZ:Betti} and a transcendence result of Bertrand \cite[Th\'eor\`eme 5]{Bertrand}.   A special case of Theorem \ref{measures} was proved by a different method in \cite[Proposition 1.9]{DWY:Lattes}.

\subsection{Small points}
In the Appendix, we show that heights associated to semipositive metrized $\R$-divisors satisfy an equidistribution law.  As we shall see, Corollary \ref{equidistribution} applies to sequences in the base curve $B$ where the specializations of points in $E(k)$ satisfy non-trivial linear relations.  For example, generalizing \cite[Corollary 1.2]{DM:variation},  we obtain:
	
\begin{theorem} \label{geometry of relations}
Let $\mcE \to B$ be a non-isotrivial elliptic surface defined over a number field $K$, and let $E$ be the corresponding elliptic curve over the field $k = \Kbar(B)$.   Suppose that $P_1, \ldots, P_m$ is a collection of $m\geq 1$ linearly independent points in $E(k)$, also defined over $K$ as sections of $\mcE\to B$.  Suppose that $\{t_n\}\subset B(\Kbar)$ is a non-repeating sequence where
\begin{equation} \label{relationX}
		a_{1,n} P_{1,t_n} + a_{2,n} P_{2,t_n} + \cdots + a_{m,n} P_{m,t_n} = O_{t_n}
\end{equation}
for $a_{i,n} \in \bZ$, with $[a_{1,n}:\cdots:a_{m,n}]\to [x_1:\cdots:x_m]$ in $\mathbb{RP}^{m-1}$ as $n\to\infty$.  Set 
	$$X = x_1P_1 + \cdots + x_n P_n \;  \in \;  E(k)\otimes \R.$$
Then 
	$$h_X(t_n) \to 0$$
for the height function associated to the metrized $\R$-divisor $\Dbar_X$.  Moreover, for each place $v$ of $K$, the $\Gal(\Kbar/K)$-orbits of $t_n$ in $B(\Kbar)$ are uniformly distributed on $B_v^{\an}$ with respect to the probability measure
$$\mu_{X,v} \; := \; \frac{1}{\hat{h}_E(X)} \, \omega_{X,v} \; =  \;  \frac{1}{\hat{h}_E(X)}\left(\sum_i \left(x_i^2 - \sum_{j\not=i} x_ix_j \right)  \omega_{P_i,v} + \sum_{i < j} x_ix_j \; \omega_{P_i + P_j, v}\right).  $$
\end{theorem}
	
\noindent
A sequence $\{t_n\}_{n\geq 0}$ is said to be non-repeating if $t_n \not= t_m$ for all $n\not= m$.

\begin{remark}
For nonzero $X \in E(k)\otimes \R$, the height $h_X$ will have only finitely many zeros unless a positive real multiple $cX$ is represented by an element of $E(k)$; see Proposition \ref{finite zeros}.  On the other hand, there is always an infinite sequence $\{t_n\}$ for which \eqref{relationX} is satisfied so that $\essmin (h_X) = 0$; see Proposition \ref{small points exist}. 
\end{remark}

\subsection{Example}
Let $E_t$ be the Legendre elliptic curve defined by 
	$$y^2 = x(x-1)(x-t)$$
for $t \in \Qbar\setminus\{0,1\}$.  By filling in the family over $t = 0,1,\infty$, we obtain an elliptic surface $\mcE \to B$ with $B = \bP^1$ defined over $\bQ$.  Here $k = \Qbar(t)$.  It is easy to see that $\rank E(k) = 0$.  However, by choosing any collection of $m$ distinct points $x_1, x_2, \ldots, x_m \in \bP^1(\Qbar)\setminus\{0,1,\infty\}$, we can construct an elliptic surface $\mcE'\to B'$ with $\rank E'(k') \geq m$ where $k' = \Qbar(B')$.  Indeed, we let $P_{x_i}$ be a point with constant $x$-coordinate equal to $x_i$.  As the points $x_i$ are distinct, the structure of the field extensions $k_i/k$, determined by each $P_{x_i}$, implies that the points must be independent.  We pass to a branched cover $B' \to B$ so that each $P_{x_i}$ defines a section over $B'$ and set $k' = \Qbar(B')$.  These examples were considered in \cite{Masser:Zannier} and the associated measures $\omega_{P_{x_i}}$ on $B'(\C)$ (or rather, their projections to $B= \bP^1$) were computed in \cite{DWY:Lattes}.

\subsection{Outline of the article}  In Section \ref{R-divisors}, we fix some notation and introduce metrizations on $\R$-divisors on curves defined over a number field, and we examine their intersection numbers. In Section \ref{R points}, we prove that each nonzero element $X \in E(k) \otimes \R$ induces a continuous, adelic, semipositive metrization $\Dbar_X$ on an ample $\R$-divisor on the base curve $B$.  In Section \ref{small section}, we study the sequences of small points for the height function $h_X$ on $B(\Qbar)$ associated to $\Dbar_X$.  In Section \ref{biquadratic section} we lay out the basic properties of the intersection number $(X,Y) \mapsto \Dbar_X \cdot \Dbar_Y$ as a biquadratic form on the vector space $E(k) \otimes \R$.  In Section \ref{equivalences}, we analyze the significance of $\Dbar_X \cdot \Dbar_Y = 0$ for nonzero $X, Y \in E(k) \otimes \R$, and we explain how to relate Theorems \ref{Lambda} and \ref{scheme}.  We provide a list of equivalent formulations of these theorems in Theorem \ref{TFAE}, including one inspired by Zhang's Conjecture in \cite{Zhang:ICM}.  Section \ref{measure section} contains a proof of Theorem \ref{measures}, and we complete the proofs of Theorems \ref{Lambda} and \ref{scheme} in Section \ref{final proofs}.  In the Appendix, we provide a proof of equidistribution results for heights associated to $\R$-divisors on projective varieties.

\subsection{Acknowledgements}  We spoke with many people about this work, and we are grateful to all of them for helpful discussions, including Fabrizio Barroero, Daniel Bertrand, Laura Capuano, Gabriel Dill, Philipp Habegger, Harry Schmidt, Xinyi Yuan, and Umberto Zannier.  We are especially thankful for the assistance from Lars K\"uhne in our proof of Theorem \ref{measures}.  We'd also like to express our profound gratitude to the anonymous referees for their very useful feedback and suggestions, encouraging us to simplify our presentation and generalize the result in the Appendix to arbitrary dimension.

%%%%%%%%
\bigskip
\section{$\R$-divisors on curves and arithmetic intersection}
\label{R-divisors}
	
In this section, we introduce metrizations on $\R$-divisors on curves, following Moriwaki \cite{Moriwaki:Memoir}, and their intersection numbers.

\subsection{Notation}
Here, and throughout this article, $K$ denotes a number field.  We let $M_K$ denote its set of places, with absolute values $|\cdot|_v$ satisfying the product formula:
\begin{equation} \label{product formula}
	\prod_{v\in M_K} |x|_v^{[K_v: \bQ_v]} = 1
\end{equation}
for all nonzero $x$ in $K$.   Here $K_v$ denotes the completion of $K$ with respect to $|\cdot|_v$.  We set 
\begin{equation} \label{rv}
	r_v := \frac{[K_v:\bQ_v]}{[K:\bQ]}.
\end{equation}
For each place $v \in M_K$, we let $\C_v$ denote the completion of an algebraic closure of $K_v$.
	
We let $B$ denote a smooth projective curve defined over a number field $K$.  For each $v\in M_K$, we let $B_v^{\an}$ denote the Berkovich analytification of $B$ over the field $\C_v$.
	
We let $\mathrm{Div}_{\bZ}(B)$ denote the group of divisors on $B$.  
	
Throughout, $k$ denotes the function field $\Kbar(B)$.  Its places are in one-to-one correspondence with the elements $t\in B(\Kbar)$, with absolute values given by $|f|_t = \exp(-\ord_t(f))$ for each nonzero $f\in \Kbar(B)$.  %For each place $v$ of $k$, we let $k_v$ denote the completion, and $\C_v$ the completion of an algebraic closure of $k_v$.  

\subsection{Metrizations of $\R$-divisors on curves}  \label{metrizations}
Let $B$ be a smooth projective curve defined over a number field $K$.  Let $D = \sum_i a_i D_i$ be an ample $\R$-divisor on $B$, with $a_i \in \R$ and $D_i \in \mathrm{Div}_{\bZ}(B)$ with support in $B(\Kbar)$, invariant under the action of $\Gal(\Kbar/K)$.  By rewriting the sum if necessary, we may assume that each $D_i$ is associated to an ample line bundle $L_i$ that extends over the Berkovich analytification $B^{\an}_v$ for each place $v$ of $K$. 
	
A {\bf continuous, adelic metrization} for $D$ is a collection of continuous functions 
	$$g_v : B^{\an}_v \setminus \supp D \to \R$$
for $v \in M_K$, such that 
\begin{enumerate}
	\item For each $v$, the locally-defined function $\psi_v := g_v + \sum_i a_i \log|f_i|_v$ extends continuously to the support of $D$, where $f_i$ is a local defining equation for $D_i$ defined over $K$; 
	\item there exists a model $(\mcB, \mcD)$ of $(B,D)$ over the ring of integers $O_K$  so that $g_v$ is the associated model function for all but finitely many $v$, or equivalently, the function $\psi_v \equiv 0$ at all but finitely many places $v$ for the associated $\{f_i\}$ near each element of $\supp D$.
\end{enumerate}
See \cite[\S0.2]{Moriwaki:Memoir} and \cite[\S1.3.2]{ChambertLoir:survey} for the definition of model functions.  We denote this data by $\Dbar = (D, \{g_v\}_{v \in M_K})$.

The metrization is {\bf semipositive} if each $g_v$ is subharmonic on $B^{\an}_v \setminus \supp D$.    An $\R$-divisor $D$ on $B$ and collection of continuous functions $g_v : B_v^{\an}\setminus\supp D \to \R$, for $v \in M_K$, is said to be {\bf integrable} if $D = D_1 - D_2$ and $g_v = g_{v,1} - g_{v,2}$ for two adelic, semipositive metrizations on ample $\R$-divisors $\Dbar_i = (D_i, \{g_{i,v}\})$.  We write $\Dbar = \Dbar_1 - \Dbar_2$.  An associated height function is given by $h_{\Dbar} = h_{\Dbar_1}- h_{\Dbar_2}$.

Moriwaki calls a semipositive $\Dbar$ a {\em relatively nef} adelic arithmetic $\R$-divisor on $B$ \cite{Moriwaki:Memoir}.  This extends Zhang's notion of an adelic, semipositive metric on a line bundle to $\R$-divisors \cite{Zhang:adelic}. Indeed, for $D$ an ample divisor on $B$ associated to a line bundle $L$, equipped with an adelic metric $\{\|\cdot \|_v\}_{v\in M_K}$, and $s$ a meromorphic section of $L$ with $(s) = D$, we put $g_v = -\log \|s\|_v$ at each place $v$ of the number field $K$. 
	
For any integrable $\Dbar$, we let $\omega_{\Dbar, v}$ denote its {\bf curvature distribution} on $B_v^{\an}$; by definition, this is a (signed) measure of total mass $\deg D$, equal to the Laplacian of $g_v$ away from $\supp D$.  See, for example, \cite{BRbook} for more information about the distribution-valued Laplacian on Berkovich curves.  For semipositive $\Dbar$, the measure $\omega_{\Dbar_v}$ is positive, and its associated probability measure is denoted by 
	$$\mu_{\Dbar,v} := \frac{1}{\deg D}  \; \omega_{\Dbar,v}.$$

There is an associated {\bf height function} on $B(\Kbar)$ defined by
\begin{equation} \label{R height}
	h_{\Dbar}(x) :=  \sum_{v\in M_K} \frac{r_v}{|\Gal(\Kbar/K) \cdot x|} \sum_{x' \in \Gal(\Kbar/K)\cdot x} g_v(x'),
\end{equation}
for $x \not\in \supp D$.  Recall that $r_v$ was defined in \eqref{rv}.  For any rational function $\phi$ on $B$ defined over $K$, and for any real $a\in \R$, note that 
	$$h_{\Dbar}(x) =  \sum_{v \in M_K} \frac{r_v}{|\Gal(\Kbar/K) \cdot x|} \sum_{x' \in \Gal(\Kbar/K)\cdot x} \left(g_v - a \log|\phi|_v\right)(x')$$
away from $(\supp D)  \cup  (\supp (\phi))$, from the product formula \eqref{product formula}.  This allows definition \eqref{R height} to extend to the points $x \in \supp D$, by choosing any $\phi$ so that $x \in \supp (\phi)$ and $a$ so that $g_v - a \log|\phi|_v$ extends continuously at $x$ for every $v$.  For an $\R$-divisor $D' = \sum_i b_i \, [x_i]$ with support in $B(\Kbar)$, we will write 
	$$h_{\Dbar}(D') := \sum_i b_i \, h_{\Dbar}(x_i).$$
	
\subsection{Intersection} 
For divisors $D_1, D_2\in \mathrm{Div}_{\bZ}(B)$ associated to line bundles $L_1$ and $L_2$, respectively, equipped with continuous, adelic metrics $\Dbar_1$ and $\Dbar_2$, the arithmetic intersection number is defined in \cite{Zhang:adelic} (see also \cite{ChambertLoir:survey}) as 
\begin{eqnarray} \label{line bundle pairing}
		\quad \Dbar_1 \cdot \Dbar_2 &:=& h_{\Dbar_1}((s_2)) +  \sum_{v \in M_K} r_v \int_{B_v^{\an}} (-\log\| s_2 \|_{\Dbar_2,v}) \, d\omega_{\Dbar_1,v} \\
		&=& h_{\Dbar_1}((s_2)) + h_{\Dbar_2}((s_1)) + \sum_{v \in M_K} r_v \int_{B_v^{\an}} (-\log\| s_2\|_{\Dbar_2,v}) \, (d\omega_{\Dbar_1,v}-\delta_{(s_1)}) \nonumber \\ 
		&=& h_{\Dbar_1}((s_2)) + h_{\Dbar_2}((s_1)) + 
		\sum_{v \in M_K} r_v \int_{B_v^{\an}} (-\log\| s_2 \|_{\Dbar_2,v}) \, \Delta( -\log\| s_1 \|_{\Dbar_1,v})  \nonumber \\	
		&=& \Dbar_2 \cdot \Dbar_1,   \nonumber
\end{eqnarray}
where $s_i$ is a meromorphic section of $L_i$ defined over $K$, for $i = 1, 2$, with divisors $(s_1)$ and $(s_2)$ of disjoint support.  For the continuous, adelic metrizations of $\R$-divisors, we extend by $\R$-linearity, so that 
\begin{equation} \label{pairing}
		\Dbar_1 \cdot \Dbar_2 = h_{\Dbar_1}(D_2) + \sum_{v \in M_K} r_v \int_{B_v^{\an}} g_{\Dbar_2,v} \, d\omega_{\Dbar_1,v} = \Dbar_2 \cdot \Dbar_1.
\end{equation}
	
\begin{remark}\label{intersectionvsdeg}
The intersection number \eqref{pairing} coincides with $\widehat\deg(\Dbar_1\, \Dbar_2)$ of \cite{Moriwaki:Memoir}.  Indeed, \cite[Theorem 4.1.3]{Moriwaki:Memoir} states that each $\Dbar$ can be uniformly approximated by metrizations associated to models, and it is known that the intersection numbers coincide for these model metrics \cite[Proposition 2.1.1]{Moriwaki:Dirichlet}.  
\end{remark}
	
Now suppose that $D$ is an ample $\R$-divisor on $B$.  We say $\Dbar$ is {\bf normalized} if its self-intersection number satisfies
	$$\Dbar \cdot \Dbar = 0.$$
Note that any continuous, adelic metrization on the ample $D$ can be normalized by adding a constant to $g_v$ at some place. 
	
For each $a\in \R$ and $\Dbar = (D, \{g_v\})$, we write $a \Dbar$ for the pair $(aD, \{ag_v\})$.  Normalized metrized divisors $\Dbar_1$ and $\Dbar_2$ on $B$ are {\bf isomorphic} (written as $\Dbar_1 \iso \Dbar_2$) if $\Dbar_1 - \Dbar_2$ is principal, meaning that there are rational functions $\phi_1, \ldots, \phi_m \in K(B)$ and real numbers $a_1, \ldots, a_m$, so that 
	$$\Dbar_1 - \Dbar_2 = \sum_{i=1}^m a_i \, \big( (\phi_i), \{-\log|\phi_i|_v\}_{v\in M_K}\big).$$
Note that by the product formula the height function $h_{\Dbar}$ depends only on the isomorphism class of $\Dbar$.    
	
We will make use of Moriwaki's arithmetic Hodge-index theorem in the following form:
	
\begin{theorem} \cite[Corollary 7.1.2]{Moriwaki:Memoir} \label{isomorphic} Suppose $\Dbar_1$ and $\Dbar_2$ are normalized continuous semipositive adelic metrizations on ample $\R$-divisors with $\deg D_1 = \deg D_2$.  Then $\Dbar_1\cdot \Dbar_2 \geq 0$, and $\Dbar_1\cdot \Dbar_2 = 0$ if and only if $\Dbar_1$ and $\Dbar_2$ are isomorphic.
\end{theorem}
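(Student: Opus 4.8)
The plan is to reduce Theorem~\ref{isomorphic} to the arithmetic Hodge index theorem for $\bQ$-divisors (equivalently, line bundles) on an arithmetic surface, which is classical, and then to promote it to $\R$-coefficients using Moriwaki's continuity results. First, by $\R$-bilinearity of the pairing \eqref{pairing} together with the normalization $\Dbar_1\cdot\Dbar_1=\Dbar_2\cdot\Dbar_2=0$, setting $\Dbar:=\Dbar_1-\Dbar_2$ (an admissible metrized $\R$-divisor whose underlying divisor $D:=D_1-D_2$ has degree $0$) gives $\Dbar\cdot\Dbar=-2\,\Dbar_1\cdot\Dbar_2$. So the theorem is equivalent to: for every admissible metrized $\R$-divisor $\Dbar=(D,\{g_v\})$ of degree $0$ that is a difference of semipositive metrizations on ample $\R$-divisors, $\Dbar\cdot\Dbar\le 0$, with equality exactly when $\Dbar$ is principal. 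Since heights and intersection numbers are insensitive to replacing $K$ by a finite extension, and adding a constant to each $g_v$ does not change the self-intersection of a degree-$0$ divisor (and moves $\Dbar$ within its isomorphism class), we may freely assume $\supp D\subset B(K)$ and impose a convenient mean-zero normalization on the $g_v$ against a fixed admissible reference measure at each place.

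For the inequality, I would use Moriwaki's approximation theorem \cite[Theorem~4.1.3]{Moriwaki:Memoir} to write each $\Dbar_i$ as a uniform limit $\Dbar_i=\lim_n\Dbar_i^{(n)}$ of semipositive metrizations on ample $\bQ$-divisors; after subtracting additive constants $c_{i,n}\to 0$ and a harmless rescaling so that $\deg D_1^{(n)}=\deg D_2^{(n)}$, we may take them normalized with equal degree. The arithmetic Hodge index theorem for $\bQ$-divisors on a curve over a number field (the Faltings--Hriljac form of the Néron pairing, or \cite[Theorem~1.10]{Zhang:adelic} and its consequences) yields $\Dbar_1^{(n)}\cdot\Dbar_2^{(n)}\ge 0$, and passing to the limit is legitimate because the pairing \eqref{pairing}, and $h_{\Dbar}$ on any fixed finite set of points, are continuous under uniform convergence of semipositive metrics; hence $\Dbar_1\cdot\Dbar_2\ge 0$, i.e. $\Dbar\cdot\Dbar\le 0$. (The upper bound on $\Dbar_P\cdot\Dbar_Q$ referred to in Remark~\ref{lower bound} is of the same elementary nature, via Zhang's inequality \eqref{Zhang for the sum}.)

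The main obstacle is the equality case, since $\Dbar\cdot\Dbar=0$ is not a closed condition and does not obviously survive $\bQ$-divisor approximation. Here the clean route is the Faltings--Hriljac decomposition on the arithmetic surface: after a finite base extension, write the degree-$0$ admissible $\Dbar$ as $\Dbar=\Dbar_{\mathrm{harm}}+\Dbar^{\perp}$, where $\Dbar_{\mathrm{harm}}$ is the canonical (harmonic) adelic metrization of $D$, with canonical/Arakelov curvature at each place, and $\Dbar^{\perp}=(0,\{\varphi_v\})$ with each $\varphi_v$ of mean zero against those canonical measures; arranging the reference normalization so that $\Dbar_{\mathrm{harm}}\cdot\Dbar^{\perp}=0$ gives
$$\Dbar\cdot\Dbar \;=\; \Dbar_{\mathrm{harm}}\cdot\Dbar_{\mathrm{harm}} \;+\; \Dbar^{\perp}\cdot\Dbar^{\perp}.$$
Now $\Dbar^{\perp}\cdot\Dbar^{\perp}=-\sum_v r_v\int_{B_v^{\an}}\varphi_v\,\Delta\varphi_v\le 0$, an energy that vanishes iff every $\varphi_v$ is constant, while $\Dbar_{\mathrm{harm}}\cdot\Dbar_{\mathrm{harm}}=-c\,\langle[D],[D]\rangle_{\mathrm{NT}}\le 0$, where $\langle\cdot,\cdot\rangle_{\mathrm{NT}}$ is the Néron--Tate height pairing on $\Pic^0(B)(\Kbar)\otimes\R$ and $[D]$ is the class of $D$. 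Both summands being $\le 0$, $\Dbar\cdot\Dbar=0$ forces $\varphi_v$ constant for all $v$, so $\Dbar=\Dbar_{\mathrm{harm}}$ up to constants, and $\langle[D],[D]\rangle_{\mathrm{NT}}=0$; by positive-definiteness of $\langle\cdot,\cdot\rangle_{\mathrm{NT}}$ on the $\R$-vector space $\Pic^0(B)(\Kbar)\otimes\R$ we get $[D]=0$, i.e. $D$ is an $\R$-linear combination of principal divisors, whence $\Dbar$ is isomorphic to $0$, i.e. principal. The two delicate points are (i) the identity $\Dbar_{\mathrm{harm}}^2=-c\,\langle[D],[D]\rangle_{\mathrm{NT}}$ and the non-degeneracy of $\langle\cdot,\cdot\rangle_{\mathrm{NT}}$ on $\Pic^0\otimes\R$, which one transfers from the classical $\bQ$-divisor statement using continuity of the arithmetic volume $\volhat$ \cite[Theorem~5.3.1]{Moriwaki:Memoir} and the $\bQ$-divisor Hodge index theorem \cite[Theorem~7.1.1]{Moriwaki:Memoir}, and (ii) justifying the decomposition $\Dbar=\Dbar_{\mathrm{harm}}+\Dbar^{\perp}$ and the bilinearity computation in the adelic $\R$-divisor setting, for which one again appeals to \cite{Moriwaki:Memoir}.
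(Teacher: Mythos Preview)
Your opening reduction is exactly the paper's proof: set $\Dbar=\Dbar_1-\Dbar_2$, observe $\deg D=0$ and $\Dbar\cdot\Dbar=-2\,\Dbar_1\cdot\Dbar_2$, and reduce the statement to ``$\Dbar\cdot\Dbar\le 0$ with equality iff $\Dbar$ is principal (up to an additive constant, which the normalization of the $\Dbar_i$ forces to vanish).'' At that point the paper simply invokes \cite[Corollary~7.1.2]{Moriwaki:Memoir} as a black box and stops --- the theorem is, after all, attributed to Moriwaki in its very statement.

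Everything after your first paragraph is therefore a sketch of a proof of Moriwaki's corollary itself, not of the theorem as stated in the paper. Your Faltings--Hriljac outline is essentially sound: the decomposition $\Dbar=\Dbar_{\mathrm{harm}}+(0,\{\varphi_v\})$ with the admissible metric of curvature $(\deg D)\mu_v=0$ does give an orthogonal splitting of $\Dbar\cdot\Dbar$ into the negative of the N\'eron--Tate height of $[D]\in\Pic^0(B)\otimes\R$ and the negative of the Dirichlet energies $\sum_v r_v\int\varphi_v\,\Delta\varphi_v$, and the equality case then forces both $[D]=0$ in $\Pic^0\otimes\R$ and all $\varphi_v$ constant. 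Two minor quibbles: the cross-term $\Dbar_{\mathrm{harm}}\cdot\Dbar^\perp$ vanishes automatically (since $\omega_{\Dbar_{\mathrm{harm}},v}=0$), so no special normalization is needed there; and invoking continuity of $\volhat$ for point~(i) is overkill --- the identity $\Dbar_{\mathrm{harm}}^2=-\langle[D],[D]\rangle_{\mathrm{NT}}$ extends from $\bQ$- to $\R$-divisors by straight bilinearity, and the non-degeneracy of the N\'eron--Tate pairing on $\Pic^0(B)(\Kbar)\otimes\R$ is classical. In short, your argument is correct but reproves more than the paper asks for.
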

	
\begin{proof}
Set $\Dbar = \Dbar_1 - \Dbar_2$, so that the underlying divisor $D$ has degree 0, and 
	$$\Dbar \cdot \Dbar = -2\; \Dbar_1 \cdot \Dbar_2$$
From \cite[Corollary 7.1.2]{Moriwaki:Memoir}, we have that $\Dbar \cdot \Dbar \leq 0$ with equality if and only if $\Dbar$ is principal, up to the addition of a constant $c \in \R$ to the metrization $g_v$ at some place $v$.  But then $\Dbar_1\cdot \Dbar_1 = \Dbar_2 \cdot \Dbar_2 + 2 \,c\,  r_v \deg D_2$ for this constant $c$, so the normalization of $\Dbar_1$ and $\Dbar_2$ implies that $c=0$.
\end{proof}

\subsection{Essential minima}  \label{minima}
Following \cite{Zhang:adelic}, the essential minimum of the height $h_{\Dbar}$ is defined as 
\begin{equation} \label{ess min}
	e_1(\Dbar) := \sup_{F} \inf_{x \in B(\Kbar)\setminus F} h_{\Dbar}(x),
\end{equation}
over all finite subsets $F$ of $B(\Kbar)$, and we put 
	$$e_2(\Dbar) := \inf_{x\in B(\Kbar)} h_{\Dbar}(x).$$
	
\begin{theorem} \cite[Theorem 1.10]{Zhang:adelic} \label{Zhang inequality}
For any adelic, semipositive metrization $\Dbar$ of an ample $\R$-divisor $D$, we have
	$$e_1(\Dbar) \geq \; \frac{\Dbar \cdot \Dbar}{2 \deg D} \;  \geq \frac12 \left( e_1(\Dbar) + e_2(\Dbar) \right)$$
\end{theorem}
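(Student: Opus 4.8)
The plan is to deduce the $\R$-divisor statement from the case of $\bQ$-divisors, where it is precisely \cite[Theorem 1.10]{Zhang:adelic}: given an adelic semipositive metrization of an ample $\bQ$-divisor one clears denominators to reduce to an ample line bundle, and $e_1$, $e_2$, $\deg D$, $\Dbar\cdot\Dbar$ rescale with weights $1,1,1,2$ under $\Dbar \mapsto c\,\Dbar$ for positive rational $c$, so Zhang's inequality for line bundles gives it for $\bQ$-divisors. It then remains to pass from $\bQ$-divisors to $\R$-divisors by approximation, and for this I would use only the height formula \eqref{R height}, the intersection formula \eqref{pairing}, and Moriwaki's approximation results.

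First I would invoke Moriwaki's theory of adelic arithmetic $\R$-divisors of $(C^0\cap\mathrm{PSH})$-type (passing through the model approximation of \cite[Theorem 4.1.3]{Moriwaki:Memoir} and then replacing real coefficients by nearby rationals while staying in the semipositive cone) to write $\Dbar = (D,\{g_v\})$ as a uniform limit of adelic semipositive metrizations $\Dbar_n = (D_n,\{g_{n,v}\})$ of ample $\bQ$-divisors $D_n$: here $\eps_n := \sum_{v\in M_K} r_v\,\|g_{n,v}-g_v\|_\infty \to 0$, with $g_{n,v}=g_v$ off a fixed finite set of places, and $D_n = \sum_i a_i^{(n)} D_i \to D = \sum_i a_i D_i$ with $a_i^{(n)}\to a_i$ for the same ample generators $D_i$ (ampleness of $D_n$ for $n$ large is automatic, ampleness being an open condition on the coefficients). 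Then I would check that the four quantities in the statement vary continuously along the sequence. From \eqref{R height} one has $|h_{\Dbar_n}(x)-h_{\Dbar}(x)|\le \eps_n$ for every $x\in B(\Kbar)$, hence $|e_i(\Dbar_n)-e_i(\Dbar)|\le\eps_n$ for $i=1,2$, since the $\sup\inf$ and the $\inf$ of a uniformly convergent family of real functions converge. Clearly $\deg D_n\to\deg D$. Finally $\Dbar_n\cdot\Dbar_n\to\Dbar\cdot\Dbar$: in \eqref{pairing}, $h_{\Dbar_n}(D_n)\to h_{\Dbar}(D)$ by the height convergence together with $a_i^{(n)}\to a_i$, while $\int_{B_v^{\an}} g_{n,v}\,d\omega_{\Dbar_n,v}\to \int_{B_v^{\an}} g_v\,d\omega_{\Dbar,v}$ because $g_{n,v}\to g_v$ uniformly and $\omega_{\Dbar_n,v}=\Delta g_{n,v}\to\Delta g_v=\omega_{\Dbar,v}$ weakly (only finitely many places contribute). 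Applying Zhang's inequality to each $\Dbar_n$ and letting $n\to\infty$ then yields the theorem.

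The step I expect to be the main obstacle is the approximation itself: producing $\bQ$-divisor metrizations that are simultaneously semipositive, carried by ample underlying divisors, and uniformly close to $\Dbar$ at every place (trivial off a fixed finite set). This is exactly what Moriwaki's construction of the cone of adelic arithmetic $\R$-divisors of $(C^0\cap\mathrm{PSH})$-type is designed to provide, so once that input is cited correctly the remaining steps are the routine continuity checks above. If one prefers to bypass that black box, one can instead transcribe Zhang's original argument directly for curves: after normalizing $\Dbar\cdot\Dbar=0$ (using that $\Dbar\mapsto\Dbar+\overline{t}$ shifts both $e_1$ and $e_2$ by $t$ and $\Dbar\cdot\Dbar$ by $2t\deg D$, so the inequality is translation-equivariant), the upper bound $\tfrac{\Dbar\cdot\Dbar}{2\deg D}\ge\tfrac12(e_1(\Dbar)+e_2(\Dbar))$ follows from arithmetic Hilbert--Samuel on the curve, available for $\R$-divisors through the continuity of the arithmetic volume \cite[Theorem 5.3.1]{Moriwaki:Memoir} together with the identity $\volhat(\Dbar)=\Dbar\cdot\Dbar$ in the arithmetically ample case on a curve, and the lower bound $e_1(\Dbar)\ge\tfrac{\Dbar\cdot\Dbar}{2\deg D}$ follows from the generalized fundamental inequality \cite[Lemme 5.1]{ChambertLoir:Thuillier}.
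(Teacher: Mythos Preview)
Your proposal is correct and takes essentially the same approach as the paper: cite Zhang's theorem for ample line bundles, then pass to $\R$-divisors by approximating with semipositive $\bQ$-divisor metrizations, using that the height functions converge uniformly (so $e_1$ and $e_2$ converge) and that the intersection number is bilinear/continuous in the metrization. The paper's own proof is a two-sentence sketch of exactly this argument; your careful continuity checks and the alternative route via arithmetic Hilbert--Samuel at the end go beyond what the paper supplies, but are not needed.
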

	
\begin{proof}
Zhang proved the result for ample line bundles equipped with adelic, semipositive metrics \cite[Theorem 1.10]{Zhang:adelic}.  It holds also for metrizations of $\R$-divisors because the height function associated to an $\R$-divisor is a uniform limit of heights associated to $\bQ$-divisors, and the intersection number is a bilinear form on metrized divisors.
\end{proof}
	
Using the upper bound on $\Dbar\cdot \Dbar$ in Theorem \ref{Zhang inequality}, we can extend Theorem \ref{isomorphic} to:
	
\begin{theorem}  \label{isomorphic+} Suppose $\Dbar_1$ and $\Dbar_2$ are normalized semipositive adelic metrizations on ample $\R$-divisors of the same degree, and suppose the essential minimum of at least one of the $\Dbar_i$ is 0.  Then the following are equivalent: 
\begin{enumerate}
	\item $\Dbar_1\cdot \Dbar_2 = 0$ 
	\item  $\Dbar_1$ and $\Dbar_2$ are isomorphic
	\item  $h_{\Dbar_1} = h_{\Dbar_2}$ on $B(\Kbar)$
	\item $h_{\Dbar_1} = h_{\Dbar_2}$ at all but finitely many points of $B(\Kbar)$
	\item there exists an infinite non-repeating sequence $t_n$ in $B(\Kbar)$ for which 
$$\lim_{n\to\infty}  \left(h_{\Dbar_1}(t_n) + h_{\Dbar_2}(t_n)\right) = 0.$$
	\end{enumerate}
\end{theorem}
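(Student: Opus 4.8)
The plan is to prove the cycle of implications $(1)\Rightarrow(2)\Rightarrow(3)\Rightarrow(4)\Rightarrow(5)\Rightarrow(1)$, with the two cited theorems doing the substantive work: Moriwaki's arithmetic Hodge index theorem (Theorem~\ref{isomorphic}) gives $(1)\Leftrightarrow(2)$, and Zhang's inequality (Theorem~\ref{Zhang inequality}) powers $(5)\Rightarrow(1)$. The hypotheses of Theorem~\ref{isomorphic} hold verbatim here --- $\Dbar_1$ and $\Dbar_2$ are normalized, semipositive, adelic metrizations of ample $\R$-divisors of equal degree --- so $(1)\Rightarrow(2)$ is immediate, and we also record $\Dbar_1\cdot\Dbar_2\geq 0$ for free. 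For $(2)\Rightarrow(3)$: if $\Dbar_1-\Dbar_2$ is principal, then, as noted just before the statement of Theorem~\ref{isomorphic}, the product formula makes $h_{\Dbar}$ depend only on the isomorphism class of $\Dbar$, so $h_{\Dbar_1}=h_{\Dbar_2}$ on all of $B(\Kbar)$. The implication $(3)\Rightarrow(4)$ is trivial.

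For $(4)\Rightarrow(5)$ I would use the standing hypothesis that $e_1(\Dbar_i)=0$ for at least one $i$; say $e_1(\Dbar_1)=0$. Unwinding the definition \eqref{ess min}: since $\sup_F\inf_{B\setminus F}h_{\Dbar_1}=0$, the set $\{x:h_{\Dbar_1}(x)<\delta\}$ is infinite for every $\delta>0$ (otherwise that set could serve as $F$ and force $e_1\geq\delta$), while $\{x:h_{\Dbar_1}(x)<-\delta\}$ is finite for every $\delta>0$ (otherwise it would force $e_1\leq-\delta$). Hence $\{x:-\delta\leq h_{\Dbar_1}(x)<\delta\}$ is infinite for every $\delta>0$, and we may choose a non-repeating sequence $t_n\in B(\Kbar)$ with $|h_{\Dbar_1}(t_n)|\leq 1/n$, so $h_{\Dbar_1}(t_n)\to 0$. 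Deleting the finitely many indices where $h_{\Dbar_1}\neq h_{\Dbar_2}$, we get $h_{\Dbar_1}(t_n)+h_{\Dbar_2}(t_n)=2h_{\Dbar_1}(t_n)\to 0$, which is~(5).

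The key step is $(5)\Rightarrow(1)$. Given a non-repeating sequence $t_n$ with $h_{\Dbar_1}(t_n)+h_{\Dbar_2}(t_n)\to 0$, set $\Dbar=\Dbar_1+\Dbar_2$; this is again a continuous, adelic, semipositive metrization, now of the ample $\R$-divisor $D_1+D_2$ of degree $2\deg D_1$, so Theorem~\ref{Zhang inequality} applies to it. By bilinearity and symmetry of the intersection pairing \eqref{pairing} and the normalization $\Dbar_i\cdot\Dbar_i=0$, we have $\Dbar\cdot\Dbar=2\,\Dbar_1\cdot\Dbar_2\geq 0$, using $\Dbar_1\cdot\Dbar_2\geq 0$ from Theorem~\ref{isomorphic}. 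On the other hand $h_{\Dbar}(t_n)=h_{\Dbar_1}(t_n)+h_{\Dbar_2}(t_n)\to 0$, and since $t_n$ is non-repeating every cofinite subset of $B(\Kbar)$ contains a tail of the sequence, forcing $e_1(\Dbar)\leq 0$. Zhang's inequality then gives $0\geq e_1(\Dbar)\geq \Dbar\cdot\Dbar/(2\deg(D_1+D_2))\geq 0$, so $\Dbar\cdot\Dbar=0$, i.e. $\Dbar_1\cdot\Dbar_2=0$. This is~(1), closing the cycle.

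I do not expect a serious obstacle: the statement is essentially an assembly of Theorems~\ref{isomorphic} and~\ref{Zhang inequality}. The only points needing care are the identification in $(5)\Rightarrow(1)$ of the correct auxiliary metrized divisor --- the sum $\Dbar_1+\Dbar_2$, to which Zhang's inequality is applied --- and the bookkeeping in $(4)\Rightarrow(5)$ that converts the single hypothesis $e_1(\Dbar_1)=0$ into an honest sequence of heights tending to $0$ (rather than merely a sequence with $\limsup\leq 0$); both are routine once the definition of the essential minimum is unwound as above.
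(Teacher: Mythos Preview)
Your proof is correct and follows essentially the same approach as the paper: the cycle $(1)\Rightarrow(2)\Rightarrow(3)\Rightarrow(4)\Rightarrow(5)\Rightarrow(1)$, with Theorem~\ref{isomorphic} handling $(1)\Leftrightarrow(2)$ and Zhang's inequality applied to the sum $\Dbar_1+\Dbar_2$ handling $(5)\Rightarrow(1)$. Your $(4)\Rightarrow(5)$ step is in fact more carefully spelled out than the paper's, which simply asserts that the essential minimum being $0$ gives the required sequence.
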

	
\begin{proof}
We have $(1) \iff (2)$ from Theorem \ref{isomorphic}.  The definition of the height function, in view of the product formula, implies that $(2) \implies (3)$, and we clearly have $(3) \implies (4)$.  The essential minimum being 0 for $\Dbar_1$ or for $\Dbar_2$ gives $(4)\implies (5)$.  Finally, assume $(5)$.  Theorem \ref{Zhang inequality} implies that $e_1(\Dbar_i)\geq 0$, for $i = 1,2$, because $\Dbar_i$ is normalized.  Therefore, we also have $e_1(\Dbar_1 + \Dbar_2) \geq 0$ for the essential minimum of the sum $h_{\Dbar_1} + h_{\Dbar_2}$.  The existence of the sequence $\{t_n\}$ thus implies that $e_1(\Dbar_1 + \Dbar_2) = 0$.  As $\Dbar_1 \cdot \Dbar_2 \geq 0$ from Theorem \ref{isomorphic} and $\Dbar_i \cdot \Dbar_i = 0$ for $i = 1,2$ by assumption, we apply Zhang's inequality (Theorem \ref{Zhang inequality}) to $\Dbar_1 + \Dbar_2$ to obtain  
	$$0 = e_1 \left( \Dbar_1 + \Dbar_2 \right) \; \geq \;  \frac{2\,  \Dbar_1 \cdot \Dbar_2 }{\deg D_1 + \deg D_2} \geq 0,$$
which allows us to deduce condition $(1)$.
\end{proof}
	
We will use the equivalences of Theorem \ref{isomorphic+} repeatedly in our proofs of Theorems \ref{Lambda} and \ref{scheme}.

%%%%%
%%%%%%%
	
\bigskip
\section{A metrized $\R$-divisor for each element of $E(k)\otimes \R$}
\label{R points}
	
	Throughout this section, we let $\mcE \to B$ be a non-isotrivial elliptic surface defined over a number field $K$, and let $E$ be the corresponding elliptic curve over the field $k = \Kbar(B)$.  We denote the zero by $O \in E(k)$. As $E(k)$ is finitely generated, we enlarge $K$ if needed so that all sections of $\mcE \to B$ are defined over $K$.  Recall that points $P_1, \ldots, P_m \in E(k)$ are {\bf independent} if the relation 
	$$a_1 P_1 + \cdots + a_m P_m = O$$
in $E(k)$ with $a_i \in \bZ$ implies that $a_1 = \cdots = a_m = 0$.

	In this section, we show that each nonzero element $X \in E(k)\otimes \bR$ naturally gives rise to an adelic, semipositive continuous metrization $\Dbar_X$ associated to an ample $\bR$-divisor $D_X$ on $B$; see Theorem \ref{real point divisor theorem}.  For $P \in E(k)$, these metrizations on $\R$-divisors coincide with the adelically metrized line bundles on $B$ that we studied in \cite{DM:variation}.  In \S\ref{bilinearity}, we observe that the assignment $X \mapsto \Dbar_X$ is quadratic, in the sense that $\Dbar_X \iso \Dbar_{\<X,X\>}$ for a bilinear operator $(X,Y) \mapsto \Dbar_{\<X,Y\>} := \frac12 (\Dbar_{X+Y} - \Dbar_X - \Dbar_Y)$ on $E(k)\otimes\R$.  
	
We begin by recalling the basic properties of N\'eron-Tate heights and their local decompositions.
	
\subsection{N\'eron-Tate heights}
Let $\mathcal{F}$ be a number field or a function field of transcendence degree 1 in characteristic 0.  We let $M_{\mathcal{F}}$ denote its set of places.  Let $E/\mathcal{F}$ be an elliptic curve with origin $O$, expressed in Weierstrass form as 
	$$E = \{ y^2+a_1xy+a_3y=x^3+a_2x^2+a_4x+a_6\}$$ 
with discriminant $\Delta$.  Denote by 
	$$\hat{h}_{E}:E(\overline{\mathcal{F}})\to[0,\infty)$$
a N\'eron-Tate canonical height function; it can be defined by 
	$$\hat{h}_E(P) = \frac12 \lim_{n\to\infty} \frac{h(x(nP))}{n^2}$$
where $h$ is the naive Weil height on $\pr^1$ and $x: E \to \pr^1$ is the degree 2 projection to the $x$-coordinate.  
	
For each $v\in M_{\mathcal{F}}$, recall that $\mathcal{F}_v$ denotes the completion of $\mathcal{F}$ with respect to $|\cdot|_v$ and $\bC_v$ denote the completion of the algebraic closure of $\mathcal{F}_v$.  The canonical height has a decomposition into local heights, as 
	$$\hat{h}_E(P)=  \frac{1}{|\Gal( \overline{\mathcal{F}}/\mathcal{F})\cdot P|}  \;
	\sum_{Q \in \Gal( \overline{\mathcal{F}}/\mathcal{F})\cdot P}    \; 
	\sum_{v\in M_{\mathcal{F}}} r_v \, \hat{\lambda}_{E,v}(Q)$$ 
for all $P\in E(\overline{\mathcal{F}})\setminus\{O\}$, with $r_v$ defined by \eqref{rv} in the number field case, and $r_v=1$ for function fields.  The local heights $\hat{\lambda}_{E,v}$ are characterized by the three properties \cite[Chapter 6, Theorem 1.1]{Silverman:Advanced}:
\begin{enumerate}
	\item	 $\hat{\lambda}_{E,v}$ is continuous on $E(\bC_v)\setminus \{O\}$ and bounded on the complement of any $v$-adic neighborhood of $O$;
	\item the limit of $\hat{\lambda}_{E,v}(P) - \frac12 \log|x(P)|_v$ exists as $P \to O$ in $E(\bC_v)$; and
	\item for all $P = (x,y) \in E(\bC_v)$ with $2\, P \not= O$, 
	\begin{align}\label{duplication}
		\hat{\lambda}_{E,v}(2\,P) = 4 \hat{\lambda}_{E,v}(P) -  \log|2y+a_1x+a_3|_v + \frac{1}{4} \log|\Delta|_v.
	\end{align}
\end{enumerate}
Property $(3)$ may be replaced with the quasi-parallelogram law
\begin{align}\label{quasiparallelogram}
\begin{split} \hat{\lambda}_{E,v}(P+Q) + \hat{\lambda}_{E,v}(P-Q) &= 2 \hat{\lambda}_{E,v}(P) + 2 \hat{\lambda}_{E,v}(Q) \\
		&- \log|x(P) - x(Q)|_v +\frac{1}{6} \log|\Delta|_v
\end{split}
\end{align}
under the assumption that none of $P$, $Q$, $P+Q$, nor $P-Q$ is equal to $O$.
Note that $\hat{\lambda}_{E,v}$ is independent of the choice of Weierstrass equation for $E$ over $\mathcal{F}$. It is useful to recall also the triplication formula; if $3P\neq O$, then
\begin{align}\label{triplication}
	\hat{\lambda}_{E,v}(3P) = 9 \hat{\lambda}_{E,v}(P) -  \log|(3x^4+b_2x^3+3b_4x^2+3b_6x+b_8)(P)|_v - \frac{2}{3} \log|\Delta|_v,
\end{align}
where $b_i$ are the usual Weierstrass quantities; see e.g. \cite[pg. 463]{Silverman:Advanced}.

\subsection{Metrized divisors for elements of $E(k)$}
	
Fix non-torsion $P\in E(k)$.  Define 
	$$D_P:=\sum_{\gamma\in B(\Kbar)}\hat{\lambda}_{E,\ord_{\gamma}}(P) \, [\gamma].$$
We remark that $\hat{\lambda}_{E,\ord_{\gamma}}(P)\in\bQ$ \cite[Chapter 11, Theorem 5.1]{Lang:Diophantine}, so $D_P$ is a $\mathbb{Q}$-divisor on $B$.  As $P$ is defined over $K$, the divisor is $\Gal(\Kbar/K)$-invariant.
	
In \cite[Theorem 1.1]{DM:variation} we established that $D_P$ can be equipped with an adelic, semipositive, continuous and normalized metrization 
\begin{equation} \label{Q-divisor}
	\overline{D}_P:=(D_P,\{\lambda_{P,v}\}_{v\in M_K})
\end{equation}
over the number field $K$, where $\lambda_{P,v}$ denotes the extension of $t\mapsto\hat{\lambda}_{E_t,v}(P_t)$ to $B^{\an}_v$.  It follows that the associated height functions satisfy 
	$$h_P(t) := h_{\Dbar_P}(t) = \hat{h}_{E_t}(P_t)$$
for all $t \in B(\Kbar)$ for which $E_t$ is smooth.  Both minima $e_1(\Dbar_P)$ and  $e_2(\Dbar_P)$ (defined in \S\ref{minima}) are equal to 0 \cite[Proposition 4.3]{DM:variation}; this allowed us to conclude that $\Dbar_P \cdot \Dbar_P = 0$ from Theorem \ref{Zhang inequality}.
	
For $O \in E(k)$, we set 
	$$\Dbar_O := ( 0, 0),$$
the trivial divisor with all functions $g_v = 0$. For torsion points $T\not= O \in E(k)$, the metrized divisor $\Dbar_T$ can also be defined by \eqref{Q-divisor}, with $\lambda_{T,v}(t) := \hat{\lambda}_{E_t,v}(T_t)$ for all $t \in B(\Kbar)$ with $E_t$ smooth.   The following proposition is key for the passage from $E(k)$ to $E(k)\otimes\R$.

\begin{proposition}\label{keyiso}
The metrized divisor $\Dbar_P$ is well defined for $P$ in $E(k)/E(k)_{\mathrm{tors}}$, up to isomorphism.  Moreover, for each $m\geq 1$ and any set of independent points $P_1,\ldots,P_m\in E(k)$ and integers $a_1,\ldots, a_m$, the following metrized divisors are isomorphic: 	
\begin{equation} \label{line bundle isomorphism}
	\overline{D}_{a_1P_1+\cdots+ a_mP_m} \iso \sum_{i=1}^{m}\left( a_i^2-a_i\sum_{j\not=i}a_j \right)\overline{D}_{P_i}+\sum_{1\le i<j\le m}a_ia_j\overline{D}_{P_i+P_j}
\end{equation}
\end{proposition}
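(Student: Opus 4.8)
The plan is to reduce everything to an identity between $\bQ$-divisors together with an identity between the local metric functions $\lambda_{\bullet,v}$ at each place $v$, and then to invoke the defining properties of the N\'eron--Tate local heights (principally the quasi-parallelogram law \eqref{quasiparallelogram}) to verify both. First I would dispose of the well-definedness claim: if $P$ and $P' = P + T$ with $T$ torsion, then $nP = nP'$ for $n = \mathrm{ord}(T)$, so $n\Dbar_P$ and $n\Dbar_{P'}$ have the same height function on $B(\Kbar)$; since both are normalized semipositive adelic metrizations on ample $\bQ$-divisors of the same degree, Theorem \ref{isomorphic+} (or directly Theorem \ref{isomorphic}) forces $n\Dbar_P \iso n\Dbar_{P'}$, hence $\Dbar_P \iso \Dbar_{P'}$ after dividing the principal $\R$-combination by $n$. (Alternatively one checks the isomorphism directly from the quasi-parallelogram law, since $D_{P+T} - D_P$ is visibly principal once one expands $\hat\lambda_{E,v}(P_t + T_t)$; either route works.)

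For the main identity \eqref{line bundle isomorphism}, the key point is that both sides are normalized semipositive adelic metrizations on ample $\bQ$-divisors, so by Theorem \ref{isomorphic+} it suffices to show their height functions agree at all but finitely many $t \in B(\Kbar)$ — indeed, it suffices to show the underlying $\bQ$-divisors agree and the local functions $g_v$ agree, but the cleanest route is to compare the height functions $h_{\bullet}(t) = \hat h_{E_t}(\bullet_t)$ on the smooth fibers. So fix $t$ with $E_t$ smooth and none of the relevant points equal to $O_t$. Writing $R = a_1 P_1 + \cdots + a_m P_m$ and using bilinearity of the N\'eron--Tate pairing $\langle\cdot,\cdot\rangle_{E_t}$ together with $\hat h_{E_t}(Q) = \langle Q, Q\rangle_{E_t}$, one expands
\begin{align*}
\hat h_{E_t}(R_t) &= \sum_i a_i^2 \hat h_{E_t}(P_{i,t}) + 2\sum_{i<j} a_i a_j \langle P_{i,t}, P_{j,t}\rangle_{E_t},
\end{align*}
while on the right-hand side of \eqref{line bundle isomorphism} the height is
\begin{align*}
\sum_i \Bigl(a_i^2 - a_i\sum_{j\neq i} a_j\Bigr)\hat h_{E_t}(P_{i,t}) + \sum_{i<j} a_i a_j\, \hat h_{E_t}(P_{i,t}+P_{j,t}),
\end{align*}
and expanding $\hat h_{E_t}(P_{i,t}+P_{j,t}) = \hat h_{E_t}(P_{i,t}) + \hat h_{E_t}(P_{j,t}) + 2\langle P_{i,t},P_{j,t}\rangle_{E_t}$ one sees the two expressions coincide: the quadratic terms contribute $\sum_i a_i^2 \hat h_{E_t}(P_{i,t})$ on both sides once the $-a_i\sum_{j\neq i}a_j$ corrections cancel against the $\hat h_{E_t}(P_{i,t})+\hat h_{E_t}(P_{j,t})$ coming out of $\hat h_{E_t}(P_{i,t}+P_{j,t})$, and the cross terms match at $2\sum_{i<j}a_i a_j\langle P_{i,t},P_{j,t}\rangle_{E_t}$. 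Thus $h_R = h_{\mathrm{RHS}}$ on the cofinite set of $t$ where all fibers are smooth and no point degenerates, and Theorem \ref{isomorphic+}(4)$\Rightarrow$(2) gives the asserted isomorphism of normalized metrized $\bR$-divisors.

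The one point requiring care — and the step I expect to be the main obstacle — is the bookkeeping at the finitely many bad places/points: the N\'eron local heights $\hat\lambda_{E,v}$ blow up as points approach $O$, and at singular fibers the "height function" interpretation is not literally $\hat h_{E_t}$. The honest fix is to phrase the comparison place-by-place on $B_v^{\an}$: one verifies that $D_R$ and the RHS divisor agree as $\bQ$-divisors on $B$ (a finite computation with $\hat\lambda_{E,\mathrm{ord}_\gamma}$ and the quasi-parallelogram law at each $\gamma$), and that the functions $\lambda_{R,v}$ and the corresponding combination of $\lambda_{P_i,v}, \lambda_{P_i+P_j,v}$ on $B_v^{\an}$ differ by $\sum_i$ of $\log|\phi_i|_v$ for rational functions $\phi_i$ arising from the $x(P) - x(Q)$ terms in \eqref{quasiparallelogram}; this exhibits the difference as principal in the sense defined before Theorem \ref{isomorphic}. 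In practice the slicker argument above via Theorem \ref{isomorphic+} sidesteps this by only needing agreement on a cofinite subset of $B(\Kbar)$, where everything is genuinely $\hat h_{E_t}$ and the quasi-parallelogram law on the fiber does all the work; I would present that argument, remarking that the exceptional points are absorbed automatically because an isomorphism of normalized adelic metrizations is detected by heights away from a finite set.
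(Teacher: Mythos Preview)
Your slick route through Theorem~\ref{isomorphic+} is circular. That theorem requires both $\Dbar_1$ and $\Dbar_2$ to be \emph{semipositive} adelic metrizations, but the right-hand side of~\eqref{line bundle isomorphism} is a $\bZ$-linear combination of the $\Dbar_{P_i}$ and $\Dbar_{P_i+P_j}$ with coefficients that are typically negative (e.g.\ take $m=2$, $a_1=1$, $a_2=-1$: the combination is $2\Dbar_{P_1}+2\Dbar_{P_2}-\Dbar_{P_1+P_2}$), so it is only admissible, not a priori semipositive or normalized. In fact the subharmonicity of this combination is a \emph{consequence} of the proposition---the paper's remark immediately following it says so explicitly---and it is precisely this consequence that is used later in the proof of Theorem~\ref{real point divisor theorem} to define $\Dbar_X$ for real $X$. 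So you cannot invoke Theorem~\ref{isomorphic+} here without assuming what is to be proved. (Your well-definedness argument for non-torsion $P$ modulo torsion \emph{does} work via Theorem~\ref{isomorphic+}, since there both $\Dbar_P$ and $\Dbar_{P+T}$ are individually known to be semipositive from~\cite{DM:variation}; the torsion case, however, still needs a direct argument as $D_P$ is then not ample.)

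What the paper does---and what your ``honest fix'' correctly outlines---is to establish the isomorphism directly at the level of local heights: using the quasi-parallelogram law~\eqref{quasiparallelogram} repeatedly (packaged as Lemmas~\ref{multiples}, \ref{sumsofthree}, and~\ref{quasi sum}), one shows that $\hat\lambda_{E_t,v}(a_1P_{1,t}+\cdots+a_mP_{m,t})$ differs from the corresponding combination of $\hat\lambda_{E_t,v}(P_{i,t})$ and $\hat\lambda_{E_t,v}(P_{i,t}+P_{j,t})$ by $\log|f(t)|_v$ for a single rational $f\in K(B)$ independent of $v$. This exhibits the difference of metrized divisors as principal without any appeal to semipositivity or the Hodge index theorem. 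Your global-height identity on smooth fibers is of course correct and is the reason the proposition holds, but the logical order of the paper forces the local computation; you should promote your ``honest fix'' to the main argument rather than the afterthought.
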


\begin{remark}
The proposition implies, in particular, that the functions 
	$$t\mapsto \sum_{i=1}^{m} \left( a_i^2-a_i\sum_{j\not=i} a_j \right){\lambda}_{P_i,v}(t) +\sum_{1\le i<j\le m}a_ia_j \, {\lambda}_{P_i+P_j, v}(t)$$
are subharmonic on $B_v^{\an}$ (away from the points $t$ where $\lambda_{P_i,v}(t)$ or  ${\lambda}_{P_i+P_j, v}(t)$ is equal to $\infty$), for all choices of $a_i \in \bZ$, and at every place $v$ of $K$.
\end{remark}

We begin with a lemma (c.f. \cite[Exercise 6.4]{Silverman:Advanced}):
	
\begin{lemma}  \label{multiples}
Fix $P \in E(k)$.  For each nonzero $m\in\bZ$ with $|m|\ge 2$ such that $mP \not= O$, there exist $h \in K(B)$ and constant $c \in \bQ$ so that 
	$$\hat{\lambda}_{E_t,v}(mP_t)=m^2\hat{\lambda}_{E_t,v}(P_t)+ c  \log|h(t)|_v$$
at every place $v$ and for each $t\in B(\Kbar)$ such that $E_t$ is smooth.  If $P$ is torsion of order $m \geq 2$, we have 
	$$\hat{\lambda}_{E_t,v}(P_t) = c \log|h(t)|_v,$$
for some $c \in\bQ$ and $h\in K(B)$.
\end{lemma}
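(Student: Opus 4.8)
The plan is to reduce everything to the classical functional equations for N\'eron local heights --- the duplication formula \eqref{duplication}, the quasi-parallelogram law \eqref{quasiparallelogram}, and the triplication formula \eqref{triplication} --- applied fiber by fiber over $B$. After enlarging $K$ we may fix a Weierstrass equation for $E$ over the field $K(B)$; then the coordinate functions $x(P),y(P)$ of the section $P$, the Weierstrass quantities $a_i,b_i$, and the discriminant $\Delta$ all lie in $K(B)$, with $\Delta\neq 0$. Specializing an identity such as \eqref{duplication} to the fiber over $t\in B(\Kbar)$ (for $E_t$ smooth) turns each $|\cdot|_v$-quantity into the value at $t$ of the corresponding rational function on $B$; for instance \eqref{duplication} becomes
\[
\hat\lambda_{E_t,v}(2P_t)\;=\;4\,\hat\lambda_{E_t,v}(P_t)\;-\;\log|\psi_2(t)|_v\;+\;\tfrac14\log|\Delta(t)|_v,\qquad \psi_2:=2y(P)+a_1x(P)+a_3\in K(B).
\]
Since any finite $\bQ$-linear combination $\sum_i q_i\log|f_i|_v$ with $f_i\in K(B)^\times$ can be rewritten as $c\log|h|_v$ with $c\in\bQ$ and $h\in K(B)^\times$ (clear denominators), and since $\hat\lambda_{E_t,v}$ is even on $E_t$, this already settles the cases $m=\pm 2$ and reduces the general statement to $m\ge 2$.

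For $m\ge 2$ with $mP\neq O$ I would argue by induction on $m$, proving that $\hat\lambda_{E_t,v}(mP_t)=m^2\hat\lambda_{E_t,v}(P_t)+c_m\log|h_m(t)|_v$ for suitable $c_m\in\bQ$ and $h_m\in K(B)^\times$. The cases $m=1$ (trivial) and $m=2$ (above) start the induction. For the step, apply \eqref{quasiparallelogram} fiberwise with $(P,Q)$ replaced by $(mP,P)$:
\[
\hat\lambda_{E_t,v}((m+1)P_t)=2\,\hat\lambda_{E_t,v}(mP_t)-\hat\lambda_{E_t,v}((m-1)P_t)+2\,\hat\lambda_{E_t,v}(P_t)-\log\bigl|(x(mP)-x(P))(t)\bigr|_v+\tfrac16\log|\Delta(t)|_v,
\]
where $x(mP)-x(P)\in K(B)^\times$ by the multiplication-by-$m$ (division-polynomial) formulas. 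Substituting the inductive expressions for $\hat\lambda_{E_t,v}(mP_t)$ and $\hat\lambda_{E_t,v}((m-1)P_t)$, the coefficient of $\hat\lambda_{E_t,v}(P_t)$ on the right is $2m^2-(m-1)^2+2=(m+1)^2$, exactly as needed, and the remaining terms collapse to a single $c_{m+1}\log|h_{m+1}(t)|_v$. The functions $\psi_2$ and $x(jP)-x(P)$ encountered along the way are nonzero in $K(B)$ precisely because the multiples $jP$ with $j\le m+1$ are $\neq O$, which is automatic for non-torsion $P$ and, for torsion $P$, is arranged in the torsion argument below.

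For the torsion statement, let $P$ have exact order $N\ge 2$. When $N=2$ we have $3P=P$, and the triplication formula \eqref{triplication} specialized to the fiber gives $8\,\hat\lambda_{E_t,v}(P_t)=-\log|\psi_3(P_t)|_v-\tfrac23\log|\Delta(t)|_v$, which is of the required form. When $N\ge 3$, choose any integer $n$ with $2\le n<N$ and $\gcd(n,N)=1$ (possible since $N\ge 3$), and let $k$ be the multiplicative order of $n$ modulo $N$, so $n^kP=P$. Because $n<N$, the induction of the previous paragraph applies verbatim to every point $Q$ in the cyclic group $\langle P\rangle$ (all of order $N$, so all intermediate multiples $iQ$ with $i\le n$ are $\neq O$) and yields $\hat\lambda_{E_t,v}(nQ_t)=n^2\hat\lambda_{E_t,v}(Q_t)+(\text{a }c\log|h|\text{-term})$ with $h\in K(B)^\times$. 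Applying this successively along the cycle $P\to nP\to n^2P\to\cdots\to n^kP=P$ and telescoping gives $(n^{2k}-1)\,\hat\lambda_{E_t,v}(P_t)=c\log|h(t)|_v$ for some $c\in\bQ$ and $h\in K(B)^\times$; since $n\ge 2$, dividing by $n^{2k}-1$ completes the proof.

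I expect the only real friction to be bookkeeping rather than a genuine obstacle: one must track the finite set of $t\in B(\Kbar)$ on which each fiberwise identity is valid, excluding the $t$ with $E_t$ singular and those at which some multiple of $P_t$ entering the identity equals $O_t$ (equivalently, at which a division polynomial $\psi_j(P)$ or a difference $x(jP)-x(P)$ vanishes in $K(B)$) --- a finite exclusion in each case --- and one must keep the denominators of the constants $c_m$ under control as the recursion proceeds (immediate, since at each step they are $\bQ$-combinations of rationals). The substance of the lemma is entirely the classical functional equations \eqref{duplication}, \eqref{quasiparallelogram}, \eqref{triplication}, together with the observation that they hold over the function field $K(B)$ and therefore specialize to all but finitely many fibers $E_t$.
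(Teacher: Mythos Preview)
Your argument is correct and follows essentially the same route as the paper: reduce to $m\ge 2$, start the induction with the duplication formula, and step via the quasi-parallelogram law applied to $(mP,P)$, obtaining the coefficient identity $2m^2-(m-1)^2+2=(m+1)^2$. The paper does exactly this for non-torsion $P$.

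The one place you diverge is the torsion case for order $N\ge 3$. The paper treats $N=3$ by duplication (since $2P=-P$) and $N\ge 4$ by running the induction up to $m=N-1$ and using $\hat\lambda((N-1)P)=\hat\lambda(-P)=\hat\lambda(P)$, giving $((N-1)^2-1)\hat\lambda(P)=c\log|h|$. Your alternative---pick $n$ coprime to $N$ with $2\le n<N$, iterate the first-part formula along the cycle $P\mapsto nP\mapsto\cdots\mapsto n^kP=P$, and divide by $n^{2k}-1$---is a clean telescoping that avoids the separate $N=3$ case. One wording slip: you say the elements of $\langle P\rangle$ are ``all of order $N$,'' which is false in general; what you need (and have) is that the specific points $n^jP$ have order $N$ because $\gcd(n,N)=1$, so that the intermediate multiples $iQ$ for $1\le i\le n$ are indeed nonzero.
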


\begin{proof}
Upon replacing $P$ by $-P$, it suffices to prove the statement for $m\ge 2$.  The duplication formula \eqref{duplication} provides the desired result for $m=2$, assuming that $2P \not=O$.  Now fix any $m\geq 3$ and $P\in E(k)$, and assume that $mP \not= O$, $(m-1)P\neq O$ and $(m-2)P\neq O$.   Then the quasi-parallelogram law \eqref{quasiparallelogram} implies
\begin{eqnarray} \label{m cases}
	\hat{\lambda}_{E_t,v}(mP_t)&=& 2 \, \hat{\lambda}_{E_t,v}((m-1)P_t)+2\, \hat{\lambda}_{E_t,v}(P_t)-\hat{\lambda}_{E_t,v}((m-2)P_t) \\
		&& \quad  - \, \log\left| x((m-1)P_t)-x(P_t)\right|_v +\frac{1}{6} \log|\Delta_t|_v, \nonumber
\end{eqnarray}
for each $t\in B(\mathbb{C}_v)$ such that $E_t$ is smooth and $mP_t \not= O_t$, $(m-1)P_t\neq O_t$ and $(m-2)P_t\neq O_t$ and therefore for all $t\in B(\bC_v)$ by the continuity of the local height $t\mapsto\hat{\lambda}_{E_t,v}(P_t)\in\mathbb{R}\cup\{\pm\infty\}$.  The desired relation, for all non-torsion points and for all $m\geq 3$, then follows from \eqref{m cases} by an easy induction.  
		
Now suppose that $2P = O$ with $P\not= O$.  Then we have $3P=P\neq O$, and the triplication formula \eqref{triplication} implies that 
	$$\hat{\lambda}_{E_t,v}(3P_t) = 9 \hat{\lambda}_{E_t,v}(P_t) -  c\log|h(t)|_v=\hat{\lambda}_{E_t,v}(P_t),$$
for a constant $c\in\mathbb{Q}$ and $h\in K(B)$ and for all but finitely many $t$.  The equation then holds for all $t\in B(\bC_v)$ by the continuity of the local heights, and it implies that $\hat{\lambda}_{E_t,v}(P_t) = \frac{c}{8} \log |h(t)|_v$.  
		
For a torsion point $P$ of order $3$, we have $2P = -P \not= O$, so we may apply the duplication formula \eqref{duplication} to see that 
	$$\hat{\lambda}_{E_t,v}(2P_t) = 4 \hat{\lambda}_{E_t,v}(P_t) -  c\log|h(t)|_v=\hat{\lambda}_{E_t,v}(-P_t) = \hat{\lambda}_{E_t,v}(P_t),$$
for a constant $c\in\mathbb{Q}$ and $h\in K(B)$.  It follows that $\hat{\lambda}_{E_t,v}(P_t) = \frac{c}{3} \log |h(t)|_v$.
		
Finally, suppose that $P$ is torsion of order $n\geq 4$, and note that $(n-1)P = -P\neq O$, $(n-2)P\neq O$ and $(n-3)P\neq O$.  We infer from \eqref{m cases} with $3 \leq m \leq n-1$ inductively that 
	$$\hat{\lambda}_{E_t,v}((n-1)P_t)= (n-1)^2\hat{\lambda}_{E_t,v}(P_t) - c \log|h(t)|_v = \hat{\lambda}_{E_t,v}(-P_t)=\hat{\lambda}_{E_t,v}(P_t)$$
for a rational function $h \in K(B)$ and $c \in \bQ$, so that $\hat{\lambda}_{E_t,v}(P_t) = \frac{c}{n^2-2n} \log|h(t)|_v.$
\end{proof}
	
\begin{proof}[Proof of Proposition \ref{keyiso}]
Lemma \ref{multiples} implies that 
	$$\Dbar_P \iso \Dbar_O$$
for every torsion point $P \in E(k)$.  Furthermore, for any non-torsion point $P$, Lemma \ref{multiples} also implies that 
	$$\Dbar_{aP} \iso a^2 \Dbar_P$$
for all $a \in \bZ$, demonstrating \eqref{line bundle isomorphism} for $m=1$.  Therefore, if $P$ is non-torsion and $Q$ is torsion of order $n\geq 2$, we have 
	$$\Dbar_{P+Q} \iso \frac{1}{n^2} \Dbar_{n(P+Q)} = \frac{1}{n^2} \Dbar_{nP} \iso \Dbar_P.$$
This proves that the metrized divisors depend only on the class in $E(k)/E(k)_{\mathrm{tors}}$, up to isomorphism.
		
Now fix any $m\geq 2$, and any collection of independent points $P_1, \ldots, P_m \in E(k)$ and integers $a_1, \ldots, a_m$.  Define a divisor on $B$ by
	$$D' = \sum_{i=1}^{m}\left( a_i^2-a_i\sum_{j\not=1}a_j \right)D_{P_i}+\sum_{1\le i<j\le m}a_ia_j \, D_{P_i+P_j},$$
and consider the metrization on $D'$ defined by 
	$$g_v(t) = \sum_{i=1}^{m} \left( a_i^2-a_i\sum_{j\not=1} a_j \right)\lambda_{P_i, v}(t) +\sum_{1\le i<j\le m}a_ia_j \, \lambda_{P_i+P_j, v}(t).$$
To prove the proposition, we will use the quasi-parallelogram law \eqref{quasiparallelogram} to show that there exists a rational function $f \in K(B)$ so that 
\begin{equation} \label{subharmonic equality}
	g_v(t) - \hat{\lambda}_{E_t,v}(a_1P_{1,t}+\cdots+ a_mP_{m,t}) =  \log|f(t)|_v
\end{equation}
at all places $v$ of $K$ and for all but finitely many $t\in B(\bC_v)$.  
		
\begin{lemma}\label{sumsofthree}
Let $P,Q,R\in E(k)$ be independent points defined over $K$.  Then, there is a rational function $f_{P,Q,R}\in K(B)$ such that 
\begin{eqnarray*}
	\hat{\lambda}_{E_t,v}(P_t+Q_t+R_t)&=& \hat{\lambda}_{E_t,v}(P_t+R_t)+\hat{\lambda}_{E_t,v}(P_t+Q_t)+\hat{\lambda}_{E_t,v}(Q_t+R_t)\\
		&&  \quad -\hat{\lambda}_{E_t,v}(P_t)-\hat{\lambda}_{E_t,v}(Q_t)-\hat{\lambda}_{E_t,v}(R_t)-\log|f_{P,Q,R}(t)|_v.
\end{eqnarray*}		
for all $t\in B(\overline{K})$ such that $E_t$ is smooth and all $v\in M_K$. 
\end{lemma}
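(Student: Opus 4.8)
The plan is to prove Lemma~\ref{sumsofthree} as a purely formal consequence of the quasi-parallelogram law~\eqref{quasiparallelogram}, applied fiberwise. The strategy mirrors the classical identity for the theta/height pairing: the function $(P,Q)\mapsto \hat\lambda_{E_t,v}(P_t+Q_t)+\hat\lambda_{E_t,v}(P_t-Q_t)-2\hat\lambda_{E_t,v}(P_t)-2\hat\lambda_{E_t,v}(Q_t)$ differs from a local symbol by $-\log|x(P_t)-x(Q_t)|_v+\tfrac16\log|\Delta_t|_v$, and iterating this produces the desired three-variable inclusion-exclusion formula up to a rational function on $B$.

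First I would fix generic $t\in B(\Kbar)$ with $E_t$ smooth and all the relevant points ($P_t$, $Q_t$, $R_t$, their pairwise sums, and $P_t+Q_t+R_t$) distinct from $O_t$; by independence of $P,Q,R$ in $E(k)$ this excludes only finitely many $t$, and the continuity of the local heights (as in the proof of Lemma~\ref{multiples}) will let me extend the final equality to all such $t$. Then I would apply~\eqref{quasiparallelogram} twice: once to the pair $(P_t+Q_t,\,R_t)$, writing
\begin{align*}
\hat\lambda_{E_t,v}(P_t+Q_t+R_t)+\hat\lambda_{E_t,v}(P_t+Q_t-R_t) &= 2\hat\lambda_{E_t,v}(P_t+Q_t)+2\hat\lambda_{E_t,v}(R_t)\\
&\quad -\log|x(P_t+Q_t)-x(R_t)|_v+\tfrac16\log|\Delta_t|_v,
\end{align*}
and once to the pair $(P_t-R_t,\,Q_t)$ or $(Q_t+R_t,\,P_t)$ to produce a second expression for $\hat\lambda_{E_t,v}(P_t+Q_t-R_t)$ (after another application to handle $\hat\lambda_{E_t,v}(P_t-Q_t-R_t)$, or more efficiently by symmetry combining the $(P_t+R_t,Q_t)$ and $(Q_t+R_t,P_t)$ expansions). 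Eliminating the auxiliary terms $\hat\lambda_{E_t,v}(P_t+Q_t-R_t)$ etc. between these identities, the $\log|\Delta_t|_v$ contributions and all the $\log|x(\cdot)-x(\cdot)|_v$ terms collect into $-\log|f(t)|_v$ where $f$ is an explicit product of differences of $x$-coordinates and a power of $\Delta$. Crucially, each $x(S_t)$ for $S\in E(k)$ is a rational function on $B$ defined over $K$ (since $E$ and all sections are defined over $K$), so the resulting $f$ lies in $K(B)$.

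The step requiring the most care is bookkeeping: choosing which pairs to expand so that all the undesired three-point terms cancel cleanly, and verifying the coefficient of $\log|\Delta_t|_v$ comes out to the value consistent with the ambient normalization (it should cancel entirely, since both sides of the claimed identity are sums of $\hat\lambda$'s with matching total ``weight'' — the left side has weight $1$ and the right side has weight $3\cdot 1 - 3\cdot 1 = 0$... wait, one must check this, which is exactly why a $\Delta$-power may survive and must be absorbed into $f$). Concretely, I expect to use the identity in the form: add~\eqref{quasiparallelogram} for $(P_t,Q_t+R_t)$, for $(Q_t,P_t+R_t)$ (no—better to avoid over-counting) and instead run the single clean elimination via $(P_t+Q_t,R_t)$ together with applying~\eqref{quasiparallelogram} to $(P_t,R_t)$ and $(Q_t,R_t)$ and $(P_t,Q_t)$, then solving the resulting linear system for $\hat\lambda_{E_t,v}(P_t+Q_t+R_t)$. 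The remaining routine task is to identify the rational function: $f_{P,Q,R}$ will be built from $x(P_t+Q_t)-x(R_t)$, $x(P_t)-x(Q_t)$, $x(P_t)-x(R_t)$, $x(Q_t)-x(R_t)$ and a rational power of $\Delta_t$, and one checks it is a genuine element of $K(B)$ (not just locally defined) because it has a consistent expression at every $t$. The final extension from ``all but finitely many $t$'' to ``all $t$ with $E_t$ smooth'' follows, as noted, from continuity of $t\mapsto\hat\lambda_{E_t,v}(S_t)$ on $B_v^{\an}$.
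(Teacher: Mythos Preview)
Your approach is correct and is essentially the same as the paper's: apply the quasi-parallelogram law~\eqref{quasiparallelogram} several times and take a suitable linear combination so that the auxiliary local heights cancel, leaving only a product of $x$-coordinate differences and a power of $\Delta$ inside the logarithm, hence a rational function in $K(B)$. The paper resolves your bookkeeping hesitation by specifying the four pairs $\{P+R,Q\}$, $\{P,R-Q\}$, $\{P+Q,R\}$, $\{R,Q\}$ and taking the alternating sum (so that $\hat\lambda(P+R-Q)$, $\hat\lambda(P+Q-R)$, and $\hat\lambda(R-Q)$ all drop out), citing the analogous global computation in \cite[Theorem~9.3]{Silverman:Elliptic}; your alternative combinations would also work once the signs are sorted, and your remarks on rationality over $K$ and on extending by continuity to all smooth fibers are exactly what is needed.
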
	
		
\begin{proof}
The proof follows by applying the quasi-parallelogram law \eqref{quasiparallelogram} for the pairs $\{P+R, Q\}$, $\{P,R-Q\}$, $\{P+Q, R\}$ and $\{R,Q\}$ and taking an alternating sum as in \cite[Theorem 9.3]{Silverman:Elliptic}. 
\end{proof}
		
\begin{lemma}\label{quasi sum}
Fix independent $P, Q\in E(k)$.  For each $(a,b)\in\mathbb{Z}^2\setminus\{(0,0)\}$, there is rational function $h_{a,b}\in K(B)$ such that
\begin{eqnarray*}		
	\hat{\lambda}_{E_t,v}(aP_t+bQ_t)&=&
		(a^2-ab)\hat{\lambda}_{E_t,v}(P_t)+ab  \hat{\lambda}_{E_t,v}(P_t+Q_t)+\\
		&& \quad (b^2-ab)\hat{\lambda}_{E_t,v}(Q_t)-\log|h_{a,b}|_v,
\end{eqnarray*}
for all $t\in B(\overline{K})$ such that $E_t$ is smooth and all $v\in M_K$. 
\end{lemma}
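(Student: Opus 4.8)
The plan is to prove Lemma \ref{quasi sum} by reducing it to the two tools already developed: the single-variable multiple formula (Lemma \ref{multiples}) and the ``sum of three'' identity (Lemma \ref{sumsofthree}). The key observation is that the right-hand side of the claimed identity is a quadratic form in $(a,b)$ evaluated on the ``basis'' of local heights $\hat\lambda_{E_t,v}(P_t)$, $\hat\lambda_{E_t,v}(Q_t)$, $\hat\lambda_{E_t,v}(P_t+Q_t)$, and that the quasi-parallelogram law \eqref{quasiparallelogram} is precisely the statement that $\hat\lambda_{E_t,v}$ behaves quadratically up to a $\log|\cdot|_v$ correction. So one expects that, writing $aP+bQ$ in terms of $P$, $Q$, $P+Q$ via repeated applications of \eqref{quasiparallelogram}, every correction term collects into a single $\log|h_{a,b}|_v$ with $h_{a,b} \in K(B)$.

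Concretely, I would argue by induction, essentially reproving the parallelogram/quadraticity of the N\'eron--Tate pairing while tracking the rational-function corrections. First, handle the degenerate cases: if $b=0$ (or $a=0$), Lemma \ref{multiples} gives $\hat\lambda_{E_t,v}(aP_t) = a^2\hat\lambda_{E_t,v}(P_t) + c\log|h(t)|_v$, which matches since then $a^2-ab = a^2$ and the other coefficients vanish; and if $P$ is torsion or $aP+bQ = O$ one again appeals to Lemma \ref{multiples} and the fact that torsion contributes only a $\log|\cdot|_v$ term, having already shown in the proof of Proposition \ref{keyiso} that $\Dbar_P \iso \Dbar_O$ for torsion $P$ --- although in the present context $P, Q$ are independent, hence non-torsion, so the main worry is only avoiding $aP_t+bQ_t=O_t$ for special $t$, which is handled by continuity of $t\mapsto\hat\lambda_{E_t,v}(\cdot)$ as in the proof of Lemma \ref{multiples}. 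For the inductive step, I would fix $a$ and induct on $b$ (and symmetrically): apply the quasi-parallelogram law \eqref{quasiparallelogram} to the pair $\{aP+(b-1)Q,\, Q\}$, obtaining
\[
\hat\lambda_{E_t,v}(aP_t+bQ_t) = 2\hat\lambda_{E_t,v}(aP_t+(b-1)Q_t) + 2\hat\lambda_{E_t,v}(Q_t) - \hat\lambda_{E_t,v}(aP_t+(b-2)Q_t) - \log|x(aP_t+(b-1)Q_t)-x(Q_t)|_v + \tfrac16\log|\Delta_t|_v,
\]
then substitute the inductive hypothesis for the three terms $\hat\lambda_{E_t,v}(aP_t+(b-1)Q_t)$, $\hat\lambda_{E_t,v}(aP_t+(b-2)Q_t)$ (and the base cases), and check that the quadratic coefficients combine correctly: $2\big((a^2-a(b-1)) \big) - (a^2 - a(b-2)) = a^2$ for the $\hat\lambda(P_t)$-coefficient, $2 a(b-1) - a(b-2) = ab$ for the $\hat\lambda(P_t+Q_t)$-coefficient, and $2((b-1)^2 - a(b-1)) + 2 - ((b-2)^2 - a(b-2)) = b^2 - ab$ for the $\hat\lambda(Q_t)$-coefficient --- all elementary identities. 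Every leftover term is of the form $\log|(\text{rational function in the coordinates of } aP+(b-1)Q, Q)|_v$ plus $\log|\Delta_t|_v$-type terms, and since $aP+(b-1)Q$ is a $k$-rational section and addition on $E$ is defined by rational functions over $K$, all of these assemble into $\log|h_{a,b}(t)|_v$ for a single $h_{a,b}\in K(B)$, independent of $v$.

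The main obstacle --- and it is more bookkeeping than conceptual --- is managing the degenerate loci: to invoke \eqref{quasiparallelogram} one needs none of $aP_t+(b-1)Q_t$, $Q_t$, $aP_t+bQ_t$, $aP_t+(b-2)Q_t$ to equal $O_t$, and one must ensure these exceptional $t$ form a finite (or at worst proper closed) set so that continuity of the local height extends the identity everywhere, exactly as in the proof of Lemma \ref{multiples}. Since $P$ and $Q$ are independent over $k$, each section $aP+(b-1)Q$ with $(a,b-1)\neq(0,0)$ is non-torsion, hence not identically $O$, so the bad set is finite; the statement is then claimed ``for all $t\in B(\overline K)$ such that $E_t$ is smooth'', which is the correct formulation (after clearing the $\log$, the identity extends by continuity and Lemma \ref{multiples} handles the remaining $t$ where intermediate points vanish). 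I would also remark that once Lemma \ref{quasi sum} is in hand, combining it with Lemma \ref{sumsofthree} by a multilinearity/polarization argument in the $a_i$ yields \eqref{subharmonic equality} and completes the proof of Proposition \ref{keyiso}, though that is the next step rather than part of this lemma.
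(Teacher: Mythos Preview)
Your approach is correct and essentially the same as the paper's: both proceed by induction via the quasi-parallelogram law \eqref{quasiparallelogram}, with Lemma \ref{multiples} handling the degenerate axis cases. The paper organizes the induction slightly differently---it first isolates the case $b=1$ for all $a$ (your needed second base case) via the recurrence $\hat\lambda_{E_t,v}((n{+}1)P_t+Q_t)=2\hat\lambda_{E_t,v}(nP_t)+2\hat\lambda_{E_t,v}(P_t+Q_t)-\hat\lambda_{E_t,v}((n{-}1)P_t-Q_t)+\cdots$, then applies that identity with $bQ$ in place of $Q$ and unwinds---whereas you run the uniform recurrence in $b$ directly; both routes are equivalent bookkeeping. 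One small slip: your computed $\hat\lambda(P_t)$-coefficient should be $2(a^2-a(b{-}1))-(a^2-a(b{-}2))=a^2-ab$, not $a^2$.
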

		
\begin{proof}
The proof follows from the quasi-parallelogram law by an easy induction.  Lemma \ref{multiples} provides the desired result if one of $a$ or $b$ is 0.  Next we will show that for each $n\in\bZ$ there is a rational function $g\in K(B)$ so that 
\begin{align}\label{multipleplus}
	\begin{split}
	\hat{\lambda}_{E_t,v}(nP_t+Q_t)&=(n^2-n)\hat{\lambda}_{E_t,v}(P_t)+n  \hat{\lambda}_{E_t,v}(P_t+Q_t)\\
	&\quad +(1-n)\hat{\lambda}_{E_t,v}(Q_t)-\log|g|_v.
		\end{split}
\end{align}
Replacing $P$ by $-P$ we may assume that $n\ge 1$.  For $n=1$ the statement is clear. For $n\ge 1$, the quasi-parallelogram law \eqref{quasiparallelogram} implies that
\begin{eqnarray*}
	\hat{\lambda}_{E_t,v}((n+1)P_t+Q_t)&=&  \hat{\lambda}_{E_t,v}(nP_t+(P+Q)_t) \\
	&=&2\hat{\lambda}_{E_t,v}(nP_t)+2\hat{\lambda}_{E_t,v}(P_t+Q_t)-\hat{\lambda}_{E_t,v}((n-1)P_t-Q_t)\\
	&&  \quad -\log|x(nP_t)-x(P_t+Q_t)|_v  +\frac{1}{6} \log|\Delta|_v
\end{eqnarray*}
and \eqref{multipleplus} follows inductively from Lemma \ref{multiples}. Using \eqref{multipleplus} we now have a rational function $h\in K(B)$ so that
\begin{eqnarray}
	\hat{\lambda}_{E_t,v}(aP_t+bQ_t)&=& (a^2-a)\hat{\lambda}_{E_t,v}(P_t)+a\hat{\lambda}_{E_t,v}(P_t+bQ_t)+\\
	&&  \quad (1-a)\hat{\lambda}_{E_t,v}(bQ_t)-\log|h|_v.  \nonumber
\end{eqnarray}
The lemma then follows by another application of \eqref{multiples} and \eqref{multipleplus}, exchanging the roles of $P$ and $Q$.  
\end{proof}
		
Finally, a simple induction using Lemmas \ref{sumsofthree} and \ref{quasi sum} implies that for any $m\ge 2$, and for any integers $a_1, \ldots, a_m$, the equality \eqref{subharmonic equality} holds, for some rational $f$.  This completes the proof of the proposition.
\end{proof}

\subsection{Metrized divisors for elements of $E(k)\otimes\bR$}  \label{R divisor subsection}
Fix nonzero $X \in E(k)\otimes \bR$.   Choose independent points $P_1, \ldots, P_m \in  E(k)$ that define a basis for $E(k)\otimes \bR$, and write $X = x_1P_1 + \cdots + x_m P_m$ with $x_i \in \R$.  With a slight abuse of notation, we identify the two isomorphic metrized divisors in Proposition \ref{keyiso} and define an adelically metrized $\bR$-divisor on $B(\Kbar)$, over the number field $K$, by    
\begin{align}  \label{real point divisor}
	\Dbar_{X} := \sum_{i=1}^{m} \bigg( x_i^2-x_i\sum_{\substack{j\neq i}}x_j \bigg)\overline{D}_{P_i}+\sum_{1\le i<j\le m}x_ix_j \; \overline{D}_{P_i+P_j}
\end{align}
for the $\Dbar_P$ defined by \eqref{Q-divisor} when $P \in E(k)$. It defines a height function 
\begin{align}\label{associated height}
	h_X(t) = \sum_{i=1}^{m} \left( x_i^2-x_i\sum_{j\not=i} x_j \right)\hat{h}_{E_t}(P_{i,t}) + \sum_{1\le i<j\le m}x_ix_j \, \hat{h}_{E_t}(P_{i,t}+P_{j,t})
\end{align}
at all points $t \in B(\Kbar)$ for which $E_t$ is smooth.

\begin{theorem} \label{real point divisor theorem}
Fix nonzero $X \in E(k)\otimes \bR$.   The metrized divisor $\Dbar_X$ of \eqref{real point divisor} is continuous, adelic, semipositive and normalized.  The degree of the underlying $\R$-divisor $D_X$ is $\hat{h}_E(X) >0$.  Its associated height function satisfies
\begin{align}\label{real height equality}
	h_X(t)=\hat{h}_{E_t}(X_t)
\end{align}
for all $t \in B(\Kbar)$ with smooth fiber $E_t$.  Further, up to isomorphism, $\Dbar_X$ is independent of the choice of basis for $E(k)$.  
\end{theorem}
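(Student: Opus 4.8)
The plan is to reduce everything to $X\in E(k)\otimes\bQ$, where Proposition~\ref{keyiso} identifies $\Dbar_X$, up to isomorphism and a positive scalar, with one of the metrized $\bQ$-divisors $\Dbar_Q$ ($Q\in E(k)$) of \cite{DM:variation}, and then to treat general $X$ by approximation. Fix independent points $P_1,\dots,P_m\in E(k)$ spanning $E(k)\otimes\bR$ and write $X=\sum x_iP_i$. Throughout I will use that the data attached to $X$ by \eqref{real point divisor} depend polynomially on $(x_1,\dots,x_m)$ in a way that does not otherwise involve $X$: the local function $g_{X,v}$ is a linear combination of the \emph{fixed} continuous functions $\lambda_{P_i,v},\lambda_{P_i+P_j,v}$ with coefficients that are polynomials in $(x_1,\dots,x_m)$, and likewise $\deg D_X$, the coefficients of $D_X$, $h_X$, and — by $\bR$-bilinearity of \eqref{pairing} — the number $\Dbar_X\cdot\Dbar_X$ are polynomials in $(x_1,\dots,x_m)$ whose coefficients come from the fixed $\bQ$-divisors $\Dbar_{P_i},\Dbar_{P_i+P_j}$ and their intersection numbers. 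Continuity and the adelic condition are then immediate, $\Dbar_X$ being a finite $\bR$-linear combination of the continuous adelic $\Dbar_{P_i},\Dbar_{P_i+P_j}$.

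For semipositivity, if $X\in E(k)\otimes\bQ$ write $x_i=a_i/N$ with $a_i\in\bZ$, so that $NX=\sum a_iP_i\in E(k)$ and Proposition~\ref{keyiso} gives $N^2\Dbar_X\iso\Dbar_{NX}$; the latter is semipositive by \cite{DM:variation}, and semipositivity is preserved by positive scaling and by adding principal metrized divisors (whose local functions $-\log|\phi|_v$ are harmonic off their support), so $\Dbar_X$ is semipositive. For arbitrary $X$, pick $X^{(n)}\to X$ in $E(k)\otimes\bQ$; writing $S_v$ for the \emph{fixed} finite set $\bigcup_i\supp D_{P_i}\cup\bigcup_{i<j}\supp D_{P_i+P_j}$, one has $g_{X^{(n)},v}\to g_{X,v}$ locally uniformly on $B_v^{\an}\setminus S_v$, so $g_{X,v}$ is subharmonic there; and at a point $\gamma\in S_v\setminus\supp D_X$ the coefficient of $\gamma$ in $D_X$ vanishes, so the logarithmic singularities of the $\lambda_{P_i,v},\lambda_{P_i+P_j,v}$ at $\gamma$ cancel in $g_{X,v}$, which therefore extends continuously across $\gamma$ and is subharmonic there by removability. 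For normalization, $\Dbar_X\cdot\Dbar_X$ is a polynomial in $(x_1,\dots,x_m)$ which for rational $X$ equals $N^{-4}(\Dbar_{NX}\cdot\Dbar_{NX})=0$, using isomorphism-invariance of self-intersection and the normalization of $\Dbar_{NX}$ in \cite{DM:variation}; vanishing on the dense set $\bQ^m$, it vanishes identically, so $\Dbar_X\cdot\Dbar_X=0$ for every $X$.

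For the remaining assertions, from $\deg D_P=\hat{h}_E(P)$ for $P\in E(k)$ and the polarization identity $\hat{h}_E(\sum x_iP_i)=\sum_i\bigl(x_i^2-x_i\sum_{j\neq i}x_j\bigr)\hat{h}_E(P_i)+\sum_{i<j}x_ix_j\,\hat{h}_E(P_i+P_j)$ (which follows from $2\<P_i,P_j\>_E=\hat{h}_E(P_i+P_j)-\hat{h}_E(P_i)-\hat{h}_E(P_j)$) one gets $\deg D_X=\hat{h}_E(X)$; non-isotriviality makes $\hat{h}_E$ positive definite on $E(k)\otimes\bR$, so $\hat{h}_E(X)>0$ for $X\neq 0$, and a positive-degree $\bR$-divisor on a smooth projective curve is ample. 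Running the same algebra fiberwise at a $t$ with $E_t$ smooth, using $h_{\Dbar_P}(t)=\hat{h}_{E_t}(P_t)$ from \cite{DM:variation} and linearity of $h_{\Dbar}$ in $\Dbar$, yields \eqref{real height equality}. Finally, for independence of the basis, let $\Dbar_X,\Dbar_X'$ arise from two bases. By the basis-free equality \eqref{real height equality} they have the same height function $\hat{h}_{E_t}(X_t)$ at all but finitely many $t$, and by the above both are normalized semipositive adelic metrizations of ample $\bR$-divisors of degree $\hat{h}_E(X)$; it remains to check $e_1(\Dbar_X)=0$. Since $h_X$ is a pointwise limit of heights $h_{X^{(n)}}$, each a positive multiple of the height of a point of $E(k)$ and hence $\ge 0$ by \cite[Proposition~4.3]{DM:variation}, we have $h_X\ge 0$ on $B(\Kbar)$, so $e_2(\Dbar_X)\ge 0$; as $\Dbar_X\cdot\Dbar_X=0$, Theorem~\ref{Zhang inequality} forces $0\le e_1(\Dbar_X)\le -e_2(\Dbar_X)\le 0$. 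Theorem~\ref{isomorphic+}, implication $(4)\Rightarrow(2)$, then gives $\Dbar_X\iso\Dbar_X'$. (For rational $X$ one also sees this directly by applying Proposition~\ref{keyiso} to both bases with a common denominator $M$, so that $\Dbar_{MX}\iso M^2\Dbar_X$ and $\Dbar_{MX}\iso M^2\Dbar_X'$ for the intrinsic element $MX\in E(k)$.)

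The main obstacle is the passage from $\bQ$ to $\bR$. Beyond the bookkeeping near $\supp D_X$ needed for semipositivity, the essential difficulty is that the heights $h_{\Dbar_{P_i}}$ are unbounded on $B(\Kbar)$, so $h_{X^{(n)}}$ does not converge to $h_X$ uniformly; consequently basis-independence for irrational $X$ cannot be obtained as a limit of the isomorphisms valid over $\bQ$, and one is genuinely forced through the arithmetic Hodge-index theorem (Theorem~\ref{isomorphic+}, hence Moriwaki \cite{Moriwaki:Memoir}) together with the vanishing $e_1(\Dbar_X)=0$.
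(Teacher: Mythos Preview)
Your proof is correct and, for continuity, the adelic condition, semipositivity, normalization, the degree computation, and the height identity \eqref{real height equality}, it follows essentially the same line as the paper: reduce to rational $X$ via Proposition~\ref{keyiso} and pass to the limit (your polynomial-identity phrasing of the normalization argument is equivalent to the paper's limiting argument). Your extra care with removable singularities at points of $S_v\setminus\supp D_X$ is a detail the paper leaves implicit.

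The one genuine divergence is basis independence. The paper asserts this ``follows easily from Proposition~\ref{keyiso}'', and indeed it does, without any appeal to Hodge index: Proposition~\ref{keyiso} shows that $(P,Q)\mapsto\Dbar_{\<P,Q\>}:=\tfrac12(\Dbar_{P+Q}-\Dbar_P-\Dbar_Q)$ is $\bZ$-bilinear on $E(k)$ modulo principal metrized divisors, and the right-hand side of \eqref{real point divisor} is exactly the quadratic form attached to this bilinear pairing, evaluated at $X$; the unique $\R$-bilinear extension to $E(k)\otimes\R$ is then basis-free. Equivalently, expressing each $\Dbar_{Q_j},\Dbar_{Q_i+Q_j}$ in the $P$-basis via Proposition~\ref{keyiso} produces a finite set of \emph{fixed} principal metrized divisors, and the difference $\Dbar_X-\Dbar_X'$ becomes a polynomial map in $(x_1,\dots,x_m)$ into the finite-dimensional $\R$-span of the $\Dbar_{P_i},\Dbar_{P_i+P_j}$ together with those principal pieces; it is principal for all rational inputs, hence identically principal. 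So your assertion that one is ``genuinely forced'' through Theorem~\ref{isomorphic+} and Moriwaki is a bit too strong. That said, your route via $e_1(\Dbar_X)=0$ and Theorem~\ref{isomorphic+} is perfectly valid and has the virtue of being self-contained once the earlier properties are in hand; it also foreshadows how these tools are used later in the paper. The paper's route is more elementary but requires unpacking the bilinearity hidden in Proposition~\ref{keyiso}.
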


\begin{proof}
Fix $x_1,\ldots,x_m\in\bR$ and choose sequences of rational numbers $a_{n,i}/a_{n,0}\to x_i$ for $i=1,\ldots,m$.  From Proposition \ref{keyiso} we know that the functions 
	$$\frac{1}{a_{n,0}^2}\left(\sum_{i=1}^{m} \bigg( a_{n,i}^2-a_{n,i}\sum_{\substack{j\neq i}}a_{n,j} \bigg)\lambda_{P_j, v}(t)+ \sum_{1\le i<j\le m}a_{n,i}a_{n,j}\lambda_{P_i+P_j, v}(t)\right)$$
are continuous, subharmonic functions on $B_v^{\an}$ (away from their logarithmic singularities), because they define a metrized divisor isomorphic to $a_{n,0}^{-2} \; \Dbar_{a_{n,1} P_1 + \cdots + a_{n,m} P_m}$.  
The limit as $n\to \infty$ clearly exists as a continuous, semipositive, adelic metrization on an $\R$-divisor
	$$D_X =   \sum_{i=1}^{m}\bigg( x_i^2-x_i\sum_{j\not=1}x_j \bigg)D_{P_i}+ \sum_{1\le i<j\le m}x_ix_j \, D_{P_i+P_j}.$$
To see that $\Dbar_X$ is normalized, recall that by \cite[Theorem 1.1]{DM:variation} we have 
	$$\Dbar_{a_{n,1}P_1+\cdots+a_{n,m}P_m}\cdot \Dbar_{a_{n,1}P_1+\cdots+a_{n,m}P_m}=0,$$
for all $n\in\bN$. In view of Proposition \ref{keyiso} we then have
	$$\frac{1}{a_{n,0}^4}\left( \sum_{i=1}^{m} \bigg( a_{n,i}^2-a_{n,i}\sum_{\substack{ j\neq i}}a_{n,j} \bigg)\Dbar_{P_j}+ \sum_{1\le i<j\le m}a_{n,i}a_{n,j}\Dbar_{P_i+P_j}\right)^2 =0,$$
for all $n\in\bN$.  Letting $n\to\infty$ we get $\Dbar_X\cdot \Dbar_X=0$.  
		
Equation \eqref{real height equality} follows from the properties of $\hat{h}_{E_t}$ as a quadratic form on each smooth fiber $E_t$.  Specifically, we have
	$$\hat{h}_{E_t}(P_t + Q_t) = \hat{h}_{E_t}(P_t) + 2\< P_t, Q_t\>_{E_t} + \hat{h}_{E_t}(Q_t)$$
for the N\'eron-Tate bilinear form $\<P_t, Q_t\>_{E_t}$ and for any pair of points $P,Q\in E(k)$ and $t \in B(\Kbar)$ with $E_t$ smooth.   It follows that 
\begin{eqnarray*}
	\hat{h}_{E_t}(yP_t + zQ_t) 
	&=& y^2\hat{h}_{E_t}(P_t) + yz(\hat{h}_{E_t}(P_t + Q_t) - \hat{h}_{E_t}(P_t) -  \hat{h}_{E_t}(Q_t)) + z^2\hat{h}_{E_t}(Q_t)  \\
	&=& (y^2-yz)\hat{h}_{E_t}(P_t) + yz \, \hat{h}_{E_t}(P_t + Q_t)+ (z^2-yz)\hat{h}_{E_t}(Q_t) 
\end{eqnarray*}
for all $y,z \in \bR$.  Therefore, by induction, we deduce that 
\begin{eqnarray*}
	\hat{h}_{E_t}(x_1P_{1,t}+\cdots+ x_mP_{m,t}) &=&  \sum_{i=1}^m x_i^2 \, \hat{h}_{E_t}(P_{i,t}) + \, 2\sum_{i < j} x_i x_j \< P_{i,t}, P_{j,t}\>_{E_t} \\
	&=&  \sum_{i=1}^{m} \left( x_i^2-x_i\sum_{j\not=1} x_j \right)\hat{h}_{E_t}(P_{i,t}) + \sum_{1\le i<j\le m}x_ix_j \, \hat{h}_{E_t}(P_{i,t}+P_{j,t})
\end{eqnarray*}
for any collection $P_1, \ldots, P_m \in E(k)$ and real numbers $x_1, \ldots, x_m$, so that
	$$h_X(t)=\hat{h}_{E_t}(X_t)$$
for all $t \in B(\Kbar)$ with $E_t$ smooth.  
That $\Dbar_X$ does not depend on its presentation or the choice of basis follows easily from Proposition \ref{keyiso}.
\end{proof}
	
\subsection{Bilinearity}  \label{bilinearity} 
For $X,Y \in E(k)\otimes \bR$, consider the metrized $\R$-divisor
\begin{align}\label{inner product}
	\Dbar_{\<X,Y\>} := \frac12 \left(\Dbar_{X+Y} - \Dbar_X - \Dbar_Y\right)
\end{align}
on the base curve $B$, of degree equal to the N\'eron-Tate inner product of $X$ and $Y$,
	$$\<X,Y\>_E = \frac12 \left( \hat{h}_E(X+Y) - \hat{h}_E(X) - \hat{h}_E(Y)\right).$$
Note that $\Dbar_{\<X,Y\>}$ is symmetric in $X, Y \in E(k) \otimes \R$.  It is also bilinear, in the sense that	
\begin{eqnarray}  \label{bilinear divisor}
	\Dbar_{\<X,aY+bZ\>}
	&=&   \frac12 \left(  \Dbar_{X+aY+bZ} - \Dbar_X - \Dbar_{aY+bZ}  \right)  \\
	&\iso&  \frac12 \bigg( (1-a-b)\Dbar_X + (a^2-a-ab)\Dbar_Y + (b^2-b-ab)\Dbar_Z   \nonumber \\
			&& + \; a\Dbar_{X+Y} + b\Dbar_{X+Z} + ab\Dbar_{Y+Z}   - \Dbar_X   \nonumber \\  
		&&  - \;   \left[ (a^2-ab)\Dbar_Y + (b^2-ab)\Dbar_Z + ab \Dbar_{Y+Z} \right]\bigg)  \nonumber \\
			&=&  \frac12 \left( a\left[\Dbar_{X+Y}-\Dbar_X-\Dbar_Y\right] + b \left[ \Dbar_{X+Z}-\Dbar_X-\Dbar_Z\right]  \right) \nonumber \\
		&=&  a \, \Dbar_{\<X,Y\>} + b\,    \Dbar_{\<X,Z\>},  \nonumber
\end{eqnarray}
from \eqref{real point divisor} and Theorem \ref{real point divisor theorem}.  Moreover, we have 
	$$\Dbar_X \iso \Dbar_{\<X,X\>}$$
for all $X \in E(k) \otimes \R$.

	%%%%%%
	\bigskip
	\section{Small sequences}  
	\label{small section}
	
	As before, we let $\mcE \to B$ be a non-isotrivial elliptic surface defined over a number field $K$, and let $E$ be the corresponding elliptic curve over the field $k = \Kbar(B)$.  In \S\ref{R divisor subsection}, we constructed metrized $\R$-divisors $\Dbar_X$ and associated height functions $h_X$ for each element $X \in E(k) \otimes \R$.  In this section, we look at the sets of ``small" points for the height $h_X$.   We conclude the section with a proof of Theorem \ref{geometry of relations}.

	\subsection{Small sequences exist}
	For an adelic, continuous, semipositive, and normalized metrized $\R$-divisor $\Dbar$ with ample $D$ on the curve $B$, an infinite sequence $\{t_n\} \subset B(\Kbar)$ is said to be {\bf small} if 
	$$h_{\Dbar}(t_n) \to 0$$
	as $n\to \infty$.

	\begin{proposition}  \label{small points exist}
		For every nonzero $X \in E(k) \otimes \bR$, there exist small sequences for $\Dbar_X$, so that the essential minimum is $e_1(\Dbar_X) = 0$.  
		More precisely, write $X = x_1 P_1 + \cdots + x_m P_m$ for $x_i\in \R$ and independent $P_i \in E(k)$, and choose integers $a_{i,n}$ for $i = 1, \ldots, m$ and $n\in\bN$ so that $[a_{1,n}:\cdots:a_{m,n}]\to [x_1:\cdots:x_m]$ as $n\to\infty$ in the real projective space $\mathbb{RP}^{m-1}$. 
Then there exists an infinite non-repeating sequence of points $t_n \in B(\Kbar)$ at which 
	$$(a_{1,n} P_1 + \cdots + a_{m,n} P_m)_{t_n}$$
is torsion in the fiber $E_{t_n}(\Kbar)$. Moreover, for any such sequence $\{t_n\}\subset B(\Kbar)$ we have
	$$h_X (t_n) \to 0$$
as $n\to \infty$.
\end{proposition}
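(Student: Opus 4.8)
The plan is to treat the two assertions in turn: the existence of the sequence is immediate, while the limit $h_X(t_n)\to 0$ is the real content.

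\emph{Existence and $e_1(\Dbar_X)=0$.} Set $Q_n:=a_{1,n}P_1+\cdots+a_{m,n}P_m\in E(k)$. Since $(a_{1,n},\dots,a_{m,n})$ represents a point of $\mathbb{RP}^{m-1}$ it is nonzero, and because $P_1,\dots,P_m$ are independent in $E(k)$ this forces $Q_n$ to be non-torsion; by non-isotriviality $Q_n$ then has infinitely many torsion specializations, i.e. $\{t\in B(\Kbar): Q_{n,t}\text{ is torsion in }E_t\}$ is infinite (see e.g. \cite[Proposition 6.2]{DM:variation}). Choosing $t_n$ in this set, distinct from $t_1,\dots,t_{n-1}$, gives an infinite non-repeating sequence, with all $E_{t_n}$ automatically smooth. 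Granting $h_X(t_n)\to 0$ for every such sequence, the definition of the essential minimum gives $e_1(\Dbar_X)\le 0$; combined with $e_1(\Dbar_X)\ge \Dbar_X\cdot\Dbar_X/(2\deg D_X)=0$ from Theorem \ref{Zhang inequality} (since $\Dbar_X$ is normalized and $\deg D_X=\hat h_E(X)>0$ by Theorem \ref{real point divisor theorem}), this yields $e_1(\Dbar_X)=0$.

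\emph{Reduction to a uniform bound.} Fix a sequence $\{t_n\}$ as above and let $M_n=\big(\langle P_{i,t_n},P_{j,t_n}\rangle_{E_{t_n}}\big)_{1\le i,j\le m}$ be the Gram matrix of the specialized points for the N\'eron--Tate form; it is symmetric positive semidefinite because $\hat h_{E_{t_n}}$ is a positive semidefinite quadratic form on $E_{t_n}(\Kbar)\otimes\R$. By Theorem \ref{real point divisor theorem}, $h_X(t_n)=\hat h_{E_{t_n}}(X_{t_n})=x^{\top}M_n x$ with $x=(x_1,\dots,x_m)^{\top}$, and similarly $h_{Q_n}(t_n)=a_n^{\top}M_n a_n$ with $a_n=(a_{1,n},\dots,a_{m,n})^{\top}$. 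As $Q_{n,t_n}$ is torsion, $h_{Q_n}(t_n)=0$, so $a_n^{\top}M_n a_n=0$ and hence $M_n a_n=0$ by semidefiniteness. Writing $x=\lambda_n a_n+e_n$ with $e_n$ the component of $x$ orthogonal to $a_n$ in the standard inner product on $\R^m$, the relation $M_na_n=0$ gives
$$h_X(t_n)=x^{\top}M_n x=e_n^{\top}M_n e_n\;\le\;\|e_n\|^2\operatorname{tr}(M_n)=\|e_n\|^2\sum_{i=1}^m h_{P_i}(t_n),$$
and $\|e_n\|=\operatorname{dist}(x,\R a_n)\to 0$ because the projective convergence $[a_{1,n}:\cdots:a_{m,n}]\to[x_1:\cdots:x_m]$ says precisely that the line $\R a_n$ tends to $\R x\ni x$. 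It thus suffices to bound $\operatorname{tr}(M_n)=\sum_i h_{P_i}(t_n)$ uniformly in $n$.

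\emph{The uniform height bound.} Let $h_\infty$ be a Weil height on $B$ associated to a divisor of degree $1$. By Tate's theorem, for each fixed $P\in E(k)$ one has $h_P(t)=\hat h_E(P)\,h_\infty(t)+O_P\big(h_\infty(t)^{1/2}\big)$ as $h_\infty(t)\to\infty$ (the difference is the Weil height of a fixed divisor of degree $0$). Applying this to $P_i$, $P_j$, $P_i+P_j$ and polarizing gives $\langle P_{i,t},P_{j,t}\rangle_{E_t}=\langle P_i,P_j\rangle_E\,h_\infty(t)+O(h_\infty(t)^{1/2})$, with all implied constants depending only on the fixed set $\{P_1,\dots,P_m\}$. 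Hence $M_n=h_\infty(t_n)\,G+R_n$, where $G=(\langle P_i,P_j\rangle_E)_{i,j}$ and $\|R_n\|_{\mathrm{op}}=O(h_\infty(t_n)^{1/2})$. If $h_\infty(t_n)\to\infty$ along a subsequence, pass to a further subsequence along which $a_n/\|a_n\|\to u$ for some unit vector $u$; dividing $M_na_n=0$ by $h_\infty(t_n)\|a_n\|$ and letting $n\to\infty$ yields $Gu=0$. But independence of $P_1,\dots,P_m$ together with non-isotriviality makes $\hat h_E$ positive definite on $E(k)\otimes\R$, so $G$ is invertible, contradicting $u\ne 0$. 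Therefore $h_\infty(t_n)$ is bounded, $\operatorname{tr}(M_n)=\sum_i h_{P_i}(t_n)$ is bounded, and $h_X(t_n)\le\|e_n\|^2\operatorname{tr}(M_n)\to 0$.

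\emph{Main obstacle.} Every step is elementary except the uniform bound $h_\infty(t_n)=O(1)$: a priori the torsion specializations of $Q_n$ need not have bounded height as $n$ grows, and the rate of the projective convergence — hence the rate $\|e_n\|\to 0$ — is not under our control, so the two effects must be reconciled by a genuine estimate. The mechanism is that a torsion specialization of $Q_n$ makes the specialized Gram matrix $M_n$ singular along $a_n\approx x$, while to leading order $M_n$ equals $h_\infty(t_n)$ times the positive-definite function-field Gram matrix $G$; singularity then caps $h_\infty(t_n)$. This is exactly where the independence of the $P_i$ (and the non-isotriviality of $\mcE\to B$) enters.
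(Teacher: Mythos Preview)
Your proof is correct and reaches the same conclusion as the paper, but the packaging is genuinely different. The paper works with the explicit coordinate formula \eqref{associated height} for $h_X$ and directly estimates the difference $\big|h_X(t)-\tfrac{1}{M_n^2}h_{Q_n}(t)\big|$ as a linear combination of the finitely many heights $h_{P_i}(t)$ and $h_{P_i+P_j}(t)$ with coefficients tending to zero; it then invokes Lemma~\ref{bounded height specialization} (itself an application of Silverman's specialization theorem) to bound these heights uniformly over $\bigcup_n\Tor(Q_n)$. You instead pass to the fiberwise Gram matrix $M_n$, use positive semidefiniteness to upgrade $a_n^\top M_n a_n=0$ to $M_n a_n=0$, and then the orthogonal decomposition $x=\lambda_n a_n+e_n$ gives $h_X(t_n)=e_n^\top M_n e_n\le\|e_n\|^2\operatorname{tr}(M_n)$ in one line. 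Your uniform bound on $h_\infty(t_n)$ is essentially the proof of Lemma~\ref{bounded height specialization} done in matrix form. What your route buys is a clean separation of the two ingredients---projective convergence controls $\|e_n\|$, Silverman controls $\operatorname{tr}(M_n)$---and it avoids ever writing out the formula \eqref{associated height}; the paper's route is more explicit and makes the dependence on the chosen basis visible.

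One small imprecision: the error estimate $h_P(t)=\hat h_E(P)\,h_\infty(t)+O_P(h_\infty(t)^{1/2})$ is not Tate's theorem alone. Tate gives $h_P(t)=h_{D_P}(t)+O(1)$ for a $\bQ$-divisor $D_P$ of degree $\hat h_E(P)$; the further step, that the height attached to the degree-$0$ divisor $D_P-\hat h_E(P)D_\infty$ is $O(h_\infty^{1/2})$, is the standard bound for heights associated to divisors algebraically equivalent to zero. Your argument only needs $o(h_\infty)$, which is exactly Silverman's specialization \cite[Theorem~B]{Silverman:specialization} and is what the paper's Lemma~\ref{bounded height specialization} uses, so you could cite that directly and sidestep the issue.
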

	
	To prove Proposition \ref{small points exist}, we begin with a well-known statement that follows from Silverman's specialization theorem \cite{Silverman:specialization}.
	
\begin{lemma} \label{bounded height specialization}
		Fix any set of independent points $P_1, \ldots, P_m$ in $E(k)$, and let $h$ be any Weil height function on $B$ associated to a divisor of degree 1.  The set of all $t$ for which there exist integers $a_1, \ldots, a_m$, not all zero, such that 
		$$a_1 P_{1,t}  + \cdots + a_m P_{m,t} = O_t$$
		in $E_t$ has bounded $h$-height.
\end{lemma}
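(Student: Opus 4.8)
The plan is to reduce the lemma to Silverman's specialization theorem \cite{Silverman:specialization}. First I would set aside the finitely many $t \in B(\Kbar)$ for which $E_t$ is singular; these contribute a set of bounded $h$-height automatically, so it is enough to bound the height of the remaining $t$, for which the specialization homomorphism $\sigma_t \colon E(k) \to E_t(\Kbar)$, $P \mapsto P_t$, is defined. The key point is that a relation $a_1 P_{1,t} + \cdots + a_m P_{m,t} = O_t$ with $(a_1,\dots,a_m) \neq (0,\dots,0)$ says exactly that $Q := a_1 P_1 + \cdots + a_m P_m$ lies in $\ker \sigma_t$. Since $P_1,\dots,P_m$ are independent in $E(k)$, the element $Q$ is nonzero --- in fact non-torsion, because $NQ = O$ for some $N \geq 1$ would already be a nontrivial integral relation among the $P_i$ --- and so $\sigma_t$ fails to be injective.

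Thus the set described in the statement is contained in $\{ t \in B(\Kbar) : \sigma_t \text{ is not injective on } E(k) \}$ together with the finitely many $t$ with singular fiber. Since $B$ is a curve, $E(k)$ is finitely generated (Mordell--Weil, Lang--N\'eron), and since $\mcE \to B$ is non-isotrivial, Silverman's specialization theorem applies in its quantitative form and shows that the non-injectivity locus has bounded height with respect to any Weil height on $B$, in particular with respect to $h$. This would finish the proof.

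The only subtleties I anticipate are minor: (i) using independence over $\bZ$ correctly, so that $Q \neq O$ (not merely ``$Q$ torsion'') and hence genuinely witnesses non-injectivity of $\sigma_t$; and (ii) invoking the bounded-height form of the specialization theorem rather than the weaker statement that $\sigma_t$ is injective for all but finitely many $t$. For a more self-contained argument one could instead appeal to the variation of canonical heights of Tate and Silverman: the Gram matrix $G_t = \bigl(\langle P_{i,t}, P_{j,t}\rangle_{E_t}\bigr)_{i,j}$ satisfies $G_t = h(t)\,\bigl(\langle P_i, P_j\rangle_E\bigr)_{i,j} + O\bigl(\sqrt{h(t)}\bigr)$ entrywise, with leading matrix positive definite by independence and non-degeneracy of the N\'eron--Tate form on $E(k)\otimes\bR$; hence $\det G_t > 0$ for $h(t)$ large, whereas a relation forces $a^{\mathsf T} G_t a = \hat h_{E_t}\bigl(\sum_i a_i P_{i,t}\bigr) = 0$ with $G_t \succeq 0$, so $\det G_t = 0$, a contradiction once $h(t)$ is large. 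The only real obstacle in that route is nailing down the uniform error term in the variation statement, so I would favor the direct appeal to Silverman's theorem, which is precisely why the excerpt calls this ``well-known.''
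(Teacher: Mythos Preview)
Your proposal is correct. Interestingly, the paper's proof is precisely your ``alternative'' route via the Gram matrix, not your primary route through injectivity of the specialization map. The paper observes that Silverman's height comparison $\hat h_{E_t}(Q_t)/h(t) \to \hat h_E(Q)$ (what you later cite as \cite[Theorem~B]{Silverman:specialization}) forces $\det(\langle P_{i,t},P_{j,t}\rangle_t)/h(t)^m \to \det(\langle P_i,P_j\rangle_E) > 0$, so the vanishing locus $\mathcal{R}(P_1,\dots,P_m)$ of the fiberwise regulator has bounded height, and this visibly contains the relation locus. Your primary route is shorter: a nontrivial relation gives a non-torsion $Q\in\ker\sigma_t$, and Silverman's theorem on the injectivity locus finishes immediately. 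The paper's route buys a little extra---it bounds the height of the larger set where the $P_{i,t}$ become dependent in $E_t(\Kbar)\otimes\bR$, not just over $\bZ$---and this stronger containment is in fact used elsewhere in the paper (e.g.\ in the proof of Proposition~\ref{regulatorvsreal}). Your concern about the error term in the alternative route is unwarranted: one does not need the $O(\sqrt{h(t)})$ refinement, only the limit statement $\langle P_{i,t},P_{j,t}\rangle_t/h(t)\to\langle P_i,P_j\rangle_E$, which is exactly Theorem~B.
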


\begin{proof}
For each non-torsion point $Q \in E(k)$ we have \cite{Silverman:specialization}
	$$\lim_{h(t) \to \infty} \frac{\hat{h}_{E_t}(Q_{t})}{h(t)} = \hat{h}_E(Q) > 0,$$
so the set $\{t \in B(\Kbar):  \hat{h}_{E_t}(Q_t) = 0\}$ has bounded $h$-height.  Because $\det(\<P_i, P_j\>_E)_{i,j} >0$, it follows that the set 
	$$\mathcal{R}(P_1, \ldots, P_m) = \{t \in B(\Kbar):  \det ( \< P_{i,t}, P_{j,t}\>_{t} ) = 0\}$$
also has bounded height.  This set $\mathcal{R}(P_1, \ldots, P_m)$ contains the set of $t$ at which the points become linearly dependent. 
\end{proof}

\begin{proof}[Proof of Proposition \ref{small points exist}]
Write $X = x_1 P_1 + \cdots + x_m P_m$ for independent $P_1, \ldots, P_m \in E(k)$ and $x_1, \ldots, x_m \in \R$.   Fix a sequence of positive integers $M_n \to \infty$ as $n\to \infty$.  For $i = 1, \ldots, m$, choose any sequence of integers $a_{i,n}$ so that $a_{i,n}/M_n  \to  x_i$ as $n \to \infty$, and set 
	$$Q_n = a_{1,n} P_{1} + ... + a_{m,n} P_{m} \in E(k),$$
so that $\frac{1}{M_n} Q_n \to X$ in $E(k) \otimes\R$.

		Consider the set 
		$$\mathrm{Tor}(Q_n) = \{t \in B(\Kbar):  Q_{n,t} \mbox{ is torsion in } E_t\}$$
		For each $n$, the set $\Tor(Q_n)$ is infinite; in fact, it is dense in $B(\C)$ \cite[Proposition 6.2]{DM:variation} \cite[\S III.2 and Notes to Chapter III]{Zannier:book}. Moreover, from Lemma \ref{bounded height specialization}, this set has bounded height in the base curve $B$ with respect to any chosen Weil height $h$, and the height is bounded independent of $n$.  Therefore, from  \cite[Theorem A]{Silverman:specialization}, we can find $H>0$ so that 
		$$h_{P_i}(t) \leq H \quad\mbox{ and } \quad h_{P_i + P_j}(t) \leq H$$
		for all $t \in \bigcup_n \Tor(Q_n)$ and for all $i, j$. 
		
		From the formula for the height $h_X$ given in \eqref{associated height} and the formula for the height of $Q_n$ appearing in Proposition \ref{keyiso}, we have the following.  For any given $\eps> 0$, there exists $N>0$ so that 
		\begin{eqnarray*}  
			\left| h_X(t) - \frac{1}{M_n^2} h_{Q_n}(t) \right| 
			&=&  \left|   \sum_{i=1}^{m} \left( x_i^2-x_i\sum_{j\not=1} x_j  - \frac{a_{i,n}^2}{M_n^2} + \frac{a_{i,n}}{M_n}\sum_{j\not=i}  \frac{a_{j,n}}{M_n}  \right)  h_{P_i}(t) \right. \\
			&& \qquad\qquad\qquad  	\left. + \;  \left(  \sum_{1\le i<j\le m}x_ix_j - \frac{a_{i,n}a_{j,n}}{M_n^2}\right)  h_{P_i + P_j}(t) \right|   < \eps
		\end{eqnarray*}
		for all $n > N$ and for all $t$ where $h_{P_i}(t)\leq H$ and $h_{P_i + P_j}(t) \leq H$ for all $i, j$.  In particular, the estimate holds for all $t \in \bigcup_{n\geq 1} \Tor(Q_n)$.
		
		For each $n$ and for every $t \in \Tor(Q_n)$, we have $h_{Q_n}(t) = 0$.  Choosing any sequence of distinct points $t_n \in B(\Kbar)$ so that $Q_{n,t_n}$ is torsion in $E_{t_n}$, we may conclude that
		$$h_X(t_n) \to 0$$
		as $n\to \infty$.  
	\end{proof}
	
	\subsection{Characterization of small sequences}  
	Here, we observe that small sequences for real points $X \in E(k)\otimes \R$ always arise from a construction similar to that of Proposition \ref{small points exist}, where relations between the generators are ``almost" satisfied.  We will use this next proposition in the proof of Theorem \ref{TFAE}.

	\begin{proposition} \label{regulatorvsreal}
		Let $M$ be a torsion-free subgroup of $E(k)$ of rank $m$, generated by $S_1,\ldots,S_{m}$.  Set $h_{M}(t) =\displaystyle\det(\<S_{i,t}, S_{j,t}\>_t)$, for the N\'eron-Tate bilinear form $\<\cdot,\cdot\>_t$ on the fiber $E_t(\Kbar)$. For a non-repeating infinite sequence $t_n\in B(\overline{K})$, the following are equivalent
		\begin{enumerate}
			\item  $\liminf_{n\to\infty} h_M(t_n)=0;$
			\item  there is a non-zero $X \in M\otimes\bR$ such that $\liminf_{n\to\infty} h_X(t_n)=0$.  
			\item  there are sequences of points $s_{i,n} \in E_{t_n}(\Kbar)$, for $i = 1, \ldots, m$, satisfying 
			$$\liminf_{n\to \infty} \left( \max_i \hat{h}_{E_{t_n}}(s_{i,n}) \right) = 0$$
			and so that the points 
			$$S_{1,t_n} - s_{1,n}, \; \ldots \; , \; S_{m,t_n} - s_{m,n}$$
			satisfy a linear relation over $\bZ$ in $E_{t_n}(\Kbar)$. 
		\end{enumerate}
	\end{proposition}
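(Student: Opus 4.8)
The plan is to prove the cyclic chain $(2)\Rightarrow(1)\Rightarrow(3)\Rightarrow(2)$, with one preliminary reduction shared by all three implications. Suppose a non-repeating sequence $t_n\in B(\Kbar)$ satisfies $\liminf_n h_M(t_n)=0$, or $\liminf_n h_X(t_n)=0$ for some fixed nonzero $X=\sum_i x_iS_i\in M\otimes\R$. Then, after passing to a subsequence, the base heights $h(t_n)$ are bounded: this is the argument of Lemma~\ref{bounded height specialization}, since as $h(t)\to\infty$ Silverman's specialization theorem gives $h_M(t)/h(t)^m\to\det(\<S_i,S_j\>_E)>0$ and $h_X(t)/h(t)\to\hat h_E(X)>0$ (for the latter, invoke Theorem~\ref{real point divisor theorem} and linearity), so a subsequence on which $h_M$ or $h_X$ tends to $0$ cannot have $h(t_n)\to\infty$. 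Once $h(t_n)$ is bounded, \cite[Theorem A]{Silverman:specialization} bounds the entries of the Gram matrices $G_n:=(\<S_{i,t_n},S_{j,t_n}\>_{t_n})$, so after a further subsequence $G_n$ converges to a positive semidefinite matrix $G$. Throughout we use $h_M(t_n)=\det G_n$ and $h_X(t_n)=x^{\top}G_nx$, both consequences of Theorem~\ref{real point divisor theorem} together with $\hat h_{E_{t_n}}$ being a quadratic form on the fiber. From here everything is finite-dimensional linear algebra.

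With this in hand, $(2)\Rightarrow(1)$ is immediate: along the subsequence above, $x^{\top}Gx=\lim_n h_X(t_n)=0$ with $x\neq0$ and $G$ positive semidefinite forces $\det G=0$, hence $h_M(t_n)=\det G_n\to\det G=0$; since $h_M\geq0$ this gives $\liminf_n h_M(t_n)=0$. (The converse $(1)\Rightarrow(2)$, not needed for the cycle, is symmetric: choose a nonzero $x\in\ker G$ and set $X=\sum_i x_iS_i$, which is nonzero by independence of the $S_i$, so $h_X(t_n)=x^{\top}G_nx\to x^{\top}Gx=0$.)

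The heart of the argument is $(1)\Rightarrow(3)$. Pass to a subsequence with $h_M(t_n)\to0$. For each $n$, the geometry-of-numbers bound on the first minimum (Hermite's inequality), applied to the positive semidefinite integral form $a\mapsto a^{\top}G_na$ on $\bZ^m$, produces a nonzero $a_n\in\bZ^m$ with
$$\hat h_{E_{t_n}}\Big(\sum_i a_{i,n}S_{i,t_n}\Big)=a_n^{\top}G_na_n\ \le\ \gamma_m\,(\det G_n)^{1/m}=\gamma_m\, h_M(t_n)^{1/m},$$
where $\gamma_m$ depends only on $m$; when $\det G_n=0$ one uses instead that the fiberwise rank has dropped (Remark~\ref{hpositivedef}) to pick a nonzero $a_n$ with $R_n:=\sum_i a_{i,n}S_{i,t_n}$ torsion. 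Either way $\hat h_{E_{t_n}}(R_n)\to0$, where $R_n=\sum_i a_{i,n}S_{i,t_n}\in E_{t_n}(\Kbar)$. After a further subsequence, fix an index — relabeled $i=1$ — with $a_{1,n}\neq0$ for all $n$. Since $E_{t_n}(\Kbar)$ is a divisible group, choose $s_{1,n}\in E_{t_n}(\Kbar)$ with $a_{1,n}s_{1,n}=R_n$; then $\hat h_{E_{t_n}}(s_{1,n})=\hat h_{E_{t_n}}(R_n)/a_{1,n}^2\to0$. Set $s_{i,n}=O_{t_n}$ for $i\geq2$. Then $\max_i\hat h_{E_{t_n}}(s_{i,n})\to0$, and $\sum_i a_{i,n}(S_{i,t_n}-s_{i,n})=R_n-a_{1,n}s_{1,n}=O_{t_n}$ is a nontrivial $\bZ$-linear relation, giving $(3)$. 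The key idea is that the geometry of numbers furnishes an honestly small defect $R_n$, and divisibility of the elliptic fiber lets that defect be absorbed into a single coordinate.

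For $(3)\Rightarrow(2)$: set $M_n:=\max_i|a_{i,n}|\geq1$ and $v_n:=a_n/M_n$ (so $\|v_n\|_\infty=1$); from $R_n:=\sum_i a_{i,n}S_{i,t_n}=\sum_i a_{i,n}s_{i,n}$ and the triangle inequality for $\sqrt{\hat h_{E_{t_n}}}$ one gets
$$v_n^{\top}G_nv_n=\frac{\hat h_{E_{t_n}}(R_n)}{M_n^2}\ \leq\ m^2\max_i\hat h_{E_{t_n}}(s_{i,n})\ \longrightarrow\ 0.$$
This bound alone forces $h(t_n)$ to be bounded (otherwise $G_n/h(t_n)\to(\<S_i,S_j\>_E)$, which is positive definite, so $v_n^{\top}G_nv_n$ would grow like $h(t_n)$), whence after a further subsequence $v_n\to x$ with $\|x\|_\infty=1$ and $G_n\to G$, and $h_X(t_n)=x^{\top}G_nx\to x^{\top}Gx=\lim_n v_n^{\top}G_nv_n=0$ for the nonzero $X:=\sum_i x_iS_i$. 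Hence $\liminf_n h_X(t_n)=0$. I expect $(1)\Rightarrow(3)$ to be the main obstacle — locating the correct short-vector input and exploiting divisibility of the fiber — while the other implications are bounded-height bookkeeping together with compactness of the Gram matrices.
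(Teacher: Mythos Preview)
Your proof is correct, and the overall architecture---bounding the base height via Silverman specialization, then reducing to compactness of the Gram matrices $G_n$---matches the paper's. Where you genuinely diverge is in the route through the cycle. The paper proves $(2)\Rightarrow(1)$ by an explicit cofactor expansion of the determinant along the row corresponding to $X$, together with Cauchy--Schwarz on the cross terms $\langle S_{j,t_n},X_{t_n}\rangle$; your argument via $x^\top G x=0\Rightarrow\det G=0$ is cleaner. More substantially, for $(1)\Rightarrow(3)$ the paper does \emph{not} go directly: it first proves $(1)\Rightarrow(2)$ by taking an eigenvector $v_n$ for the smallest eigenvalue of $G_n$ and extracting a limit $X$, then proves $(2)\Rightarrow(3)$ by rescaling $X$ so $x_1=1$, approximating the remaining $x_i$ by rationals $a_{i,n}/n$, and absorbing the error into a single divisible perturbation. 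Your direct use of Hermite's inequality to produce a short integer vector $a_n$ with $a_n^\top G_n a_n\le\gamma_m h_M(t_n)^{1/m}$ bypasses the real intermediate $X$ entirely and is a nice alternative; it buys you integral coefficients from the start, at the cost of importing a geometry-of-numbers input the paper avoids. Your $(3)\Rightarrow(2)$ is essentially the paper's, though the paper packages the bounded-height step separately as Lemma~\ref{one relation}.
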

	
	This proposition relies heavily on Silverman's specialization results \cite[Theorem A and Theorem B]{Silverman:specialization}. We point out that \cite[Theorem B]{Silverman:specialization} holds for real points $X\in E(k)\otimes\mathbb{R}$ by the bilinearity of the N\'eron-Tate pairing. 
	We begin with a lemma. 
	
	\begin{lemma}  \label{one relation}
		Assume we are in the setting of Proposition \ref{regulatorvsreal}.  Assume further that there are sequences of points $s_{i,n} \in E_{t_n}(\Kbar)$, for $i = 1, \ldots, m$, satisfying 
		$$\sup_n\left( \max_i \hat{h}_{E_{t_n}}(s_{i,n}) \right) < \infty$$
		for which the points 
		$$S_{1,t_n} - s_{1,n}, \; \ldots \; , \; S_{m,t_n} - s_{m,n}$$
		satisfy a linear relation over $\bZ$ in $E_{t_n}(\Kbar)$.  Then the sequence $\{t_n\}$ will have bounded height in $B(\Kbar)$ with respect to any Weil height on $B$.
	\end{lemma}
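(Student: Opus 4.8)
The plan is to argue by contradiction, combining the fact that the N\'eron--Tate height on each fiber is a positive semidefinite quadratic form with Silverman's specialization theorem. Suppose the conclusion fails. Fix a Weil height $h$ on $B$ of degree $1$ (which is ample, since a divisor on a curve is ample iff it has positive degree; the case of a general Weil height then follows, as a set of bounded height for an ample height is of bounded height for every Weil height). Passing to a subsequence we may assume $h(t_n)\to\infty$, and, discarding the finitely many $t_n$ with $E_{t_n}$ singular, that every $E_{t_n}$ is smooth. For each $n$ the hypothesis gives integers $a_{1,n},\dots,a_{m,n}$, not all zero, with $\sum_i a_{i,n}(S_{i,t_n}-s_{i,n})=O_{t_n}$ in $E_{t_n}(\Kbar)$; rearranging, $\sum_i a_{i,n}S_{i,t_n}=\sum_i a_{i,n}s_{i,n}$, and in particular $\hat h_{E_{t_n}}\!\bigl(\sum_i a_{i,n}S_{i,t_n}\bigr)=\hat h_{E_{t_n}}\!\bigl(\sum_i a_{i,n}s_{i,n}\bigr)$. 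The strategy is to bound the right-hand side above and the left-hand side below, each against $\sum_i a_{i,n}^2$.

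For the upper bound, set $H:=\sup_n\max_i\hat h_{E_{t_n}}(s_{i,n})<\infty$. Since $\hat h_{E_{t_n}}$ is a positive semidefinite quadratic form, $\sqrt{\hat h_{E_{t_n}}}$ satisfies the triangle inequality, so $\sqrt{\hat h_{E_{t_n}}\bigl(\sum_i a_{i,n}s_{i,n}\bigr)}\le\sum_i|a_{i,n}|\sqrt H$, hence $\hat h_{E_{t_n}}\bigl(\sum_i a_{i,n}s_{i,n}\bigr)\le H\bigl(\sum_i|a_{i,n}|\bigr)^2\le mH\sum_i a_{i,n}^2$. For the lower bound, write $\hat h_{E_{t_n}}\bigl(\sum_i a_{i,n}S_{i,t_n}\bigr)=a_n^{\top}G_n a_n$, where $a_n=(a_{1,n},\dots,a_{m,n})$ and $G_n=(\<S_{i,t_n},S_{j,t_n}\>_{t_n})_{i,j}$. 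Applying Silverman's specialization theorem \cite{Silverman:specialization} to the finitely many points $S_i,\ S_i\pm S_j\in E(k)$ and polarizing, $\tfrac1{h(t_n)}G_n\to G_\infty:=(\<S_i,S_j\>_E)_{i,j}$, which is positive definite because $S_1,\dots,S_m$ are independent in $E(k)$ and $\hat h_E$ is positive definite modulo torsion. Hence for all large $n$ the least eigenvalue of $G_n$ is at least $\tfrac\mu2 h(t_n)$, where $\mu>0$ is the least eigenvalue of $G_\infty$, so $a_n^{\top}G_n a_n\ge\tfrac\mu2 h(t_n)\sum_i a_{i,n}^2$.

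Combining the two estimates, $\tfrac\mu2 h(t_n)\sum_i a_{i,n}^2\le mH\sum_i a_{i,n}^2$ for all large $n$; dividing by $\sum_i a_{i,n}^2\ge1$ gives $h(t_n)\le 2mH/\mu$, contradicting $h(t_n)\to\infty$. The only step that uses more than elementary quadratic-form inequalities is the invocation of Silverman's theorem to obtain uniform positivity of the rescaled Gram matrices $G_n$ for $h(t_n)$ large — this is precisely where the independence of $S_1,\dots,S_m$ enters, and it is the main point of the argument; everything else is formal.
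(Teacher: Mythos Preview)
Your proof is correct and follows essentially the same approach as the paper: argue by contradiction, assume $h(t_n)\to\infty$, and use Silverman's specialization theorem to show that the normalized Gram matrix $G_n/h(t_n)$ converges to the positive-definite matrix $(\langle S_i,S_j\rangle_E)_{i,j}$, contradicting the linear relation. The only cosmetic difference is that the paper extracts the contradiction via determinants (showing $\det A_n=0$ for the Gram matrix of the perturbed points while $\det A_n/h(t_n)^m\to\det(\langle S_i,S_j\rangle_E)\neq 0$), whereas you work directly with the quadratic form evaluated at the coefficient vector $a_n$ and the smallest eigenvalue; both routes encode the same idea.
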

	
	\begin{proof}
		Fix any Weil height $h$ on $B(\Kbar)$ of degree 1.  Consider the $m\times m$ matrix 
		$$
		A_n:= \left( \<S_{i,t_n}-s_{i,n}, \, S_{j,t_n}-s_{j,n}  \> _{t_n} \right)_{i,j} 
		$$
		where $\<\cdot, \cdot \>_{t_n}$ is the N\'eron-Tate inner product on the fiber $E_{t_n}(\Kbar)$.  Our assumption implies that 
		$$\det A_n =0$$
		for all $n$.  Assume that $h(t_n)\to \infty$. 
		Then by Silverman's specialization theorem \cite[Theorem B]{Silverman:specialization} we have 
		$$\frac{\<S_{i,t_n},S_{j,t_n}\>_{t_n}}{h(t_n)}\to \<S_i,S_j\>_E,$$
		as $n\to \infty$ for all $i,j=1,\ldots,m$.
		On the other hand, the bounded height of the perturbations $s_{i,n}$ and the Cauchy-Schwarz inequality for $\<\cdot,\cdot\>_{t_n}$ implies that 
		$$\left\lvert \frac{\<s_{i,n},s_{j,n}\>_{t_n}}{h(t_n)}\right\lvert\le \frac{\sqrt{ \hat{h}_{E_{t_n}}(s_{i,n}) \hat{h}_{E_{t_n}}(s_{j,n})  }}{h(t_n)}\to 0.$$
		Using Silverman's specialization \cite[Theorem A]{Silverman:specialization} we also have 
		$$\left\lvert \frac{\<S_{i,t_n},s_{j,n}\>_{t_n}}{h(t_n)}\right\lvert\le \frac{\sqrt{ \hat{h}_{E_{t_n}}(S_{i,t_n}) \hat{h}_{E_{t_n}}(s_{j,n})  }}{h(t_n)}\to 0.$$
		Combining these estimates, we obtain 
		$$0 = \frac{\det A_n}{(h(t_n))^m}\longrightarrow \det(\<S_i,S_j\>_E)_{i,j} \not=0,$$
		which is a contradiction.  
	\end{proof}

	\begin{proof}[Proof of Proposition \ref{regulatorvsreal}]
		Assume condition (2).  Let $X\in M\otimes\bR$ be nonzero and $\{t_n\}$ a sequence for which $\displaystyle\liminf_{n\to\infty} h_X(t_n)=0.$  Write $X=x_1S_{1}+\cdots+x_{\ell}S_{\ell}$, for  $x_i\in\bR$ not all equal to $0$.   After reordering the points $S_i$ we may assume that $x_1\neq 0$. Notice that 
		\begin{align*}	\det{\left(\begin{array}{cccc}
					\hat{h}_{E_{t_n}}(X_{t_n})& \<X_{t_n},S_{2,t_n}\>& \cdots & \<X_{t_n},S_{\ell,t_n}\>  \\
					\<S_{2,t_n},X_{t_n}\>  &  \hat{h}_{E_{t_n}}(S_{2,t_n}) &\cdots & \<S_{2,t_n},S_{\ell,t_n}\>\\
					\vdots & \vdots    &  & \vdots\\
					\<S_{\ell,t_n},X_{t_n}\> &  \<S_{\ell,t_n},S_{2,t_n}\>    &\cdots  & \hat{h}_{E_{t_n}}(S_{\ell,t_n})
				\end{array} \right)}&=x_1^2\, h_M(t_n),
		\end{align*}
		which easily follows by subtracting from the first column the sum of $x_i$ times the $j$-th column over all $j=2,\ldots,\ell$ and then subtracting from the first row the sum of $x_i$ times the $i$-th row over all $i=2,\ldots,\ell$.  Expanding the determinant along the first column we get 
		\begin{align}\label{firstrawdet}
			x_1^2\, h_M(t_n)=	h_X(t_n) \, f_{1,n}+\sum_{j=2}^{\ell}\<S_{j,t_n},X_{t_n}\>f_{j,n}, 
		\end{align}
		where for all $n\in\bN$ the $f_{j,n}$ are polynomial functions of the quantities $\<S_{j,t_n},X_{t_n}\>$ and $\<S_{j,t_n},S_{k,t_n}\>$ for $j,k=2,\ldots,\ell$. 
		Passing to a subsequence of $\{t_n\}$ we have $\displaystyle\lim_{n\to\infty} h_X(t_{k_n})=0$. In particular, since $X$ is non-trivial, \cite[Theorem B]{Silverman:specialization} yields that $\{h(t_{k_n})\}_{n\in\bN}$ is a bounded sequence. Using then \cite[Theorem A]{Silverman:specialization}, the functoriality of heights and the Cauchy-Schwarz inequality we get
		\begin{align}\label{thankstate}
			\max\{|f_{1,k_n}|,\ldots,|f_{\ell,k_n}|\}\le L, 
		\end{align}
		for some $L > 0$.  Moreover, for all $j=2,\ldots,\ell$ and all $n\in\bN$ we have 
		\begin{align}\label{cauchy-sch}
			|\<S_{j,t_{k_n}},X_{t_{k_n}}\>|^2 \; \le \; \hat{h}_{E_{t_{k_n}}}(S_{j,t_{k_n}})\hat{h}_{E_{t_{k_n}}}(X_{t_{k_n}}) \; \le\;  L\, h_X(t_{k_n})\to 0.
		\end{align}
		Our assumption on $X$ together with \eqref{firstrawdet}, \eqref{thankstate} and \eqref{cauchy-sch} yield
		$$\liminf_{n\to\infty} h_M(t_n)=0,$$
		proving that condition (1) holds.
		
		Now assume (1).  Let $A_t = (\<S_{i,t},S_{j,t}\>)_{i,j}$, so that $h_M(t) = \det A_t$, and consider the family of quadratic forms
		\begin{eqnarray*}	
			q_t(\vec z) &:=& \hat{h}_{E_t}(z_1S_{1,t}+\cdots z_{m} S_{m,t})\\
			&=&  \sum_{k=1}^{m}z_k^2 \, \hat{h}_{E_t}(S_{k,t})+ 2\sum_{ i <j }z_iz_j\<S_{i,t},S_{j,t}\>=\vec{z}\, A_t \, \vec{z}^\top,
		\end{eqnarray*}
		for $\vec{z}=(z_1,\ldots,z_m)\in\bR^m$, indexed by $t\in B(\overline{K})$ where $E_t$ is smooth.  Since $q_t \ge 0$ for all $t$, we have that $A_t$ has non-negative eigenvalues. Our assumption is that 
		$$\displaystyle\liminf_{n\to\infty}\, \det A_{t_n} = 0,$$
		so, if $\la_n$ is the smallest eigenvalue of $A_{t_n}$, then
		$$\displaystyle\liminf_{n\to\infty}\la_n= 0.$$
		Let $\displaystyle\vec{v}_n=(v_{1,n},\ldots,v_{m,n})\neq 0$ be an eigenvector of $A_{t_n}$ corresponding to $\la_n$.  Then 
		$$\vec{v}_n\, A_{t_n} \,\vec{v}_n^\top=\la_n||\vec{v}_n||^2,$$
		so that 
		\begin{align}\label{qn}
			\liminf_{n\to\infty}q_{t_n}\left(\frac{\vec{v}_n}{||\vec{v}_n||} \right)= \liminf_{n\to\infty}\la_n= 0.
		\end{align}
		Passing to a subsequence of the $\{t_n\}$, we have $\lim_{n\to\infty}h_M(t_n)=0$, and passing to a further subsequence, we may set 
		$$\vec{x}:=\lim_{n\to\infty}\frac{\vec{v}_{n}}{||\vec{v}_{n}||}\in\bR^m\setminus\{\vec{0}\}.$$ 
		By \cite[Theorem B]{Silverman:specialization}, the height of $\{t_n\}$ is bounded with respect to any choice of Weil height on $B$ (because $\det (\<S_i, S_j\>_E) \not=0$). 
		In view of \cite[Theorem A]{Silverman:specialization}, we get that the sequences $\{\<S_{i,t_n},S_{j,t_n}\>\}_{n}$ for $i,j=1,\ldots,\ell$ are bounded.  Thus \eqref{qn} yields
		$$\lim_{n\to\infty}q_{t_n}(\vec{x}) =0.$$ 
		In other words, for $X=x_1S_1+\cdots+x_mS_m$ we have $\lim_{n\to\infty} h_X(t_n)=0$, providing condition (2).
		
		Assuming (2), we now prove (3).  Reordering the points and rescaling $X$ if necessary, we may assume that $x_1 = 1$.  Passing to a subsequence, we have
		\begin{align}\label{X rewritten}
			\hat{h}_{E_{t_n}}(S_{1,t_n}+x_2S_{2,t_n} +\cdots+x_{m}S_{m,t_n})\to 0,
		\end{align}
		as $n\to \infty$.  
		Let $a_{2,n},\ldots, a_{m,n}$ be infinite sequences of integers satisfying $a_{i,n}/n\to x_i$ for each $i=2,\ldots,m$.   As $\hat{h}_E(X) \not=0$, we have by Silverman specialization \cite[Theorem B]{Silverman:specialization} that the sequence $\{t_n\}$ has bounded height in $B$.  Invoking \cite[Theorem A]{Silverman:specialization} we get that all sequences $\{\<S_{i,t_n},S_{j,t_n}\>_{E_{t_n}}\}_{n\in\bN}$ are bounded. Using the fact that each $\hat{h}_{E_{t_n}}(\cdot)$ defines a quadratic form on $E_{t_n}(\Kbar)$, line \eqref{X rewritten} yields
		\begin{equation} \label{sn small}	
			\hat{h}_{E_{t_n}}\left(S_{1,t_n}+\frac1n \left(a_{2,n}S_{2,t_n} +\cdots+a_{\ell,n}S_{\ell,t_n}\right)\right)\to 0.
		\end{equation}
		Since $\overline{K}$ is algebraically closed we may find $s_n\in E_{t_n}(\overline{K})$ so that 
		\begin{align}\label{sn}
			n\, s_n=a_{2,n}S_{2,t_n} +\cdots+a_{\ell,n}S_{\ell,t_n}.
		\end{align}
		Letting $s_{1,n}:=S_{1,t_n}+s_n$ and $s_{i,n} := O_{t_n}$ for all $i = 2, \ldots, m$, equation \eqref{sn small} yields 
		$$\hat{h}_{E_{t_n}}(s_{i,n})\to 0$$
		for each $i = 1, \ldots, m$.  Moreover by \eqref{sn} we have that the set $\{S_{1,t_n}-s_{1,n}, S_{2,t_n},\ldots, S_{\ell,t_n}\}$ is linearly dependent in $E_{t_n}$ for every $n$.  
		
		Last, we assume condition (3) and prove (2).  Pass to a subsequence so that 
		$$\lim_{n\to \infty} \left( \max_i \hat{h}_{E_{t_n}}(s_{i,n}) \right) = 0.$$
		Choose sequences of integers $a_{i,n}$ for $i = 1, \ldots,m$, not all 0, so that
		$$a_{1,n} (S_{1,t_n} - s_{1,n})+ \cdots + a_{m,n} (S_{m,t_n} - s_{m,n}) = O_{t_n}$$
		for all $n$.  Now, letting $M_n = \max_i a_{i,n}$, we can pass to a further subsequence so that
		$$\frac{a_{i,n}}{M_n} \to x_i \in \R$$
		as $n\to \infty$ for each $i$, with at least one $x_i$ nonzero.  
		This implies that 
		\begin{equation} \label{approximate 0}
			\hat{h}_{E_{t_n}}\left(\frac{1}{M_n}  (a_{1,n}S_{1,t_n} + \cdots + a_{m,n} S_{m,t_n}) \right) 
			= \hat{h}_{E_{t_n}} \left( \frac{1}{M_n}  (a_{1,n}s_{1,n}+ \cdots + a_{m,n} s_{m,n}) \right)   \to 0
		\end{equation}
		as $n\to \infty$.  Finally set
		$$X = x_1 S_1 + \ldots + x_m S_m.$$
		From Lemma \ref{one relation}, we know that the sequence $\{t_n\}$ has bounded height
		and by \cite[Theorem A]{Silverman:specialization} we get that the sequences $\{\hat{h}_{E_{t_n}}(S_{i,t_n})\}_n$ are bounded. 
		Therefore,  from the definition of $h_X$ in \eqref{associated height}, line \eqref{approximate 0} implies that $h_X (t_n) \to 0$.
	\end{proof}

	\subsection{Height 0}
	As we shall see, it follows from Theorem \ref{Lambda} that, although small sequences exist as in Proposition \ref{small points exist}, we don't always have sequences with height $0$:
	\begin{proposition}  \label{finite zeros}
		Fix nonzero $X \in E(k) \otimes \bR$.  There exist infinitely many $t\in B(\Kbar)$ for which $h_X(t) = 0$ if and only if there exists a real $c>0$ so that $c\, X$ is represented by an element of $E(k)$. \end{proposition}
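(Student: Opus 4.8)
The plan is to prove the two implications separately. For the ``if'' direction, suppose $cX=P$ for some real $c>0$ and some $P\in E(k)$; since $X\ne 0$ in $E(k)\otimes\bR$, the point $P$ is non-torsion, and by Theorem \ref{real point divisor theorem} we have $h_X(t)=c^{-2}\,\hat h_{E_t}(P_t)$ at every $t$ with $E_t$ smooth. It then suffices to recall that a non-torsion section of a non-isotrivial elliptic surface has infinitely many (in fact dense) torsion specializations, e.g. by \cite[Proposition 6.2]{DM:variation} or \cite[\S III.2]{Zannier:book}; at each such $t$ one has $h_X(t)=0$.

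For the ``only if'' direction I would proceed as follows. Assume $h_X(t_n)=0$ along an infinite non-repeating sequence $\{t_n\}\subset B(\Kbar)$, discarding the finitely many $n$ with $E_{t_n}$ singular. Fix independent $P_1,\dots,P_m\in E(k)$ spanning $E(k)\otimes\bR$, write $X=\sum_i x_iP_i$ with $\vec x=(x_1,\dots,x_m)$, and put $S_{i,n}:=P_{i,t_n}$. Since $\hat h_{E_{t_n}}$ is positive definite on $E_{t_n}(\Kbar)\otimes\bR$ (Remark \ref{hpositivedef}), the equality $h_X(t_n)=\hat h_{E_{t_n}}(X_{t_n})=0$ forces $X_{t_n}=0$ in $E_{t_n}(\Kbar)\otimes\bR$; equivalently $\vec x$ lies in the subspace $V_n\subseteq\bR^m$ spanned by the group $\Lambda_n:=\{\vec a\in\bZ^m : a_1S_{1,n}+\cdots+a_mS_{m,n}\in E_{t_n}(\Kbar)_{\mathrm{tors}}\}$. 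As $\Lambda_n$ is the kernel of a homomorphism $\bZ^m\to E_{t_n}(\Kbar)/E_{t_n}(\Kbar)_{\mathrm{tors}}$ to a torsion-free group, it is a saturated sublattice, so $V_n$ is $\bQ$-rational with $V_n\cap\bZ^m=\Lambda_n$. Now let $W$ be the smallest $\bQ$-rational subspace of $\bR^m$ containing $\vec x$ (the intersection of all such; it is $\bQ$-rational and nonzero). Every $V_n$ is $\bQ$-rational and contains $\vec x$, hence $W\subseteq V_n$, hence $\bZ^m\cap W\subseteq\Lambda_n$ for all $n$. Choosing a $\bZ$-basis $w_1,\dots,w_r$ of the rank-$r$ lattice $\bZ^m\cap W$ (with $r=\dim W\ge 1$) and setting $Q_j:=\sum_i(w_j)_iP_i\in E(k)$, this says exactly that $Q_{j,t_n}$ is torsion in $E_{t_n}$ for every $j,n$, so $h_{Q_j}(t_n)=0$ for all $n$.

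It remains to force $r=1$, and here is where Theorem \ref{Lambda} enters. If $r=1$ we are done: $\vec x=c\,w_1$ for some $c\ne 0$, so $X=c\,Q_1$ and $|c|^{-1}X=\pm Q_1\in E(k)$. If instead $r\ge 2$, then $w_1,w_2$ are $\bZ$-independent, so $Q_1,Q_2$ are independent in $E(k)$, and they are non-torsion (the $w_j$ are nonzero and the $P_i$ independent); hence $\hat h_E(Q_1)\hat h_E(Q_2)-\langle Q_1,Q_2\rangle_E^2>0$ by strict Cauchy--Schwarz for the positive definite N\'eron--Tate form. Theorem \ref{Lambda} then gives $\Dbar_{Q_1}\cdot\Dbar_{Q_2}>0$, and applying Zhang's inequality (Theorem \ref{Zhang inequality}) to $\Dbar_{Q_1}+\Dbar_{Q_2}$, together with $\Dbar_{Q_i}\cdot\Dbar_{Q_i}=0$, yields $e_1(\Dbar_{Q_1}+\Dbar_{Q_2})\ge \Dbar_{Q_1}\cdot\Dbar_{Q_2}/(\hat h_E(Q_1)+\hat h_E(Q_2))>0$. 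Thus $h_{Q_1}+h_{Q_2}$ is bounded below by a positive constant outside a finite subset of $B(\Kbar)$, contradicting $h_{Q_1}(t_n)+h_{Q_2}(t_n)=0$ for the infinite non-repeating sequence $\{t_n\}$; therefore $r=1$. I expect the main obstacle to be exactly this dichotomy: the reduction to a rational subspace $W$ is routine linear algebra, but making productive use of the case $r\ge 2$ --- namely extracting two \emph{independent} rational sections with simultaneously torsion specializations along $\{t_n\}$ and excluding this --- is precisely where the non-degeneracy of Theorem \ref{Lambda} (equivalently, the Masser--Zannier finiteness theorem) is indispensable.
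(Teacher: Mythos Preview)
Your proof is correct and follows essentially the same approach as the paper's: both reduce to finding rational points $Q_1,\dots,Q_r\in E(k)$ that are simultaneously torsion along the sequence $\{t_n\}$, and then invoke Theorem \ref{Lambda} (together with Zhang's inequality, as in \eqref{Zhang for the sum}) to force $r=1$. The only cosmetic difference is the linear-algebraic bookkeeping---the paper rewrites $X=\sum_j\alpha_jQ_j$ using a $\bQ$-basis $\alpha_1,\dots,\alpha_s$ for the span of the coordinates $x_i$, whereas you take the dual viewpoint of the smallest $\bQ$-rational subspace $W\subseteq\bR^m$ containing $\vec x$; the resulting $Q_j$'s and the dimension $r=s=\dim W$ coincide.
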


	\begin{proof}
		Suppose first that $cX$ is represented by an element $P \in E(k)$ for some real $c>0$.  Then $h_X(t) = \frac{1}{c^2} h_P(t)$ at all $t$, so that $h_X(t) = 0$ whenever $P_t$ is torsion in $E_t$.  This holds at infinitely many points $t \in B(\Kbar)$.  (See, e.g., \cite[Proposition 6.2]{DM:variation}.)  
		
		For the converse, write $X = x_1 P_1 + \cdots + x_m P_m$ for independent $P_1, \ldots, P_m \in E(k)$ and $x_i\in \R$, and assume that $h_X(t)= 0$ for infinitely many $t$.  We can rewrite $X$ as 
		$$X = \alpha_1 Q_1 + \cdots + \alpha_s Q_s$$
		for $\alpha_1, \ldots, \alpha_s \in \R$ a basis for the span of $\{x_1, \ldots, x_m\}$ over $\bQ$ and $Q_1, \ldots, Q_s \in E(k)$.  For $s = 1$, we see that are we back in the setting where a multiple of $X$ is represented by an element of $E(k)$, so we may assume $s>1$.  But, for each $t$ where $h_X(t) =0$, we must have that $X_t = 0$ in $E_t(\Qbar)\otimes \R$.  By the choices of the $\alpha_i$, this means that each of the specializations $Q_{i,t}$ must be 0 in $E_t(\Qbar)\otimes \R$.  (Compare \cite[Lemma 1.1.1]{Moriwaki:Memoir}.)  In other words, the points $Q_1, \ldots, Q_s$ are simultaneously torsion at infinitely many $t$.  From Theorem \ref{Lambda}, combined with \eqref{Zhang for the sum}, this implies that each pair $Q_i$ and $Q_j$ is linearly related.  (Alternatively, here one could use the main results of \cite{Masser:Zannier, Masser:Zannier:2}.) Thus we infer that $X = c \, Q$ for some $c \in \R$ and $Q \in E(k)$.  
	\end{proof}

\subsection{Proof of Theorem \ref{geometry of relations}}  
From Theorem \ref{real point divisor theorem}, we know that $\Dbar_X$ is a continuous, adelic, semipositive, and normalized metrization on an ample $\R$-divisor.  Thus, Corollary \ref{equidistribution} applies to sequences with small height for $h_X$.  From Proposition \ref{small points exist}, we have $h_X(t_n) \to 0$ along any sequence $t_n$ for which $\sum_i r_{i,n} P_{i,t} = O_t$ with $r_{i,n} \in \bQ$ satisfying $r_{i,n} \to x_i$.  The formula for $\omega_{X,v}$ at each place follows from the definition of $\Dbar_X$ in \eqref{real point divisor}.  This completes the proof.  \qed

%%%%%%
%%%%%%
	
\bigskip
	
\section{The intersection number as a biquadratic form on $E(k)\otimes\bR$}
\label{biquadratic section}
	
	Let $\mcE \to B$ be a non-isotrivial elliptic surface defined over a number field $K$, and let $E$ be the corresponding elliptic curve over the field $k = \Kbar(B)$. Recall that, since $E(k)$ is finitely generated, we can pass to a finite extension of the number field $K$ to ensure that each section $P: B\to E$ is defined over $K$.  For each $P \in E(k)$, a metrized divisor $\Dbar_P$ is defined on the base curve $B$ by \eqref{Q-divisor}.  We extended this definition to elements $X \in E(k) \otimes \R$ with the definition \eqref{real point divisor}.  In this section, we study the basic properties of the Arakelov-Zhang intersection number 
	$$(X,Y) \mapsto \Dbar_X \cdot \Dbar_Y$$
defined by \eqref{pairing}, as a biquadratic form on the finite-dimensional vector space $E(k) \otimes \bR$.

Recall that the metrized $\R$-divisor $\Dbar_{\<X,Y\>} := \frac12 \left( \Dbar_{X+Y} - \Dbar_X - \Dbar_Y\right)$ was defined in \S\ref{bilinearity} for $X, Y \in E(k)\otimes \R$.  Our goal in this section is to prove:
	
\begin{proposition}\label{propertiesofpairing}
Fix $X, Y \in E(k)\otimes\bR$. The following hold:  
\begin{enumerate}
	\item  $\overline{D}_X\cdot \overline{D}_Y= \overline{D}_Y\cdot \overline{D}_X \; \geq \; 0$ and 
	\item $\Dbar_X \cdot \Dbar_{X+Y} = \Dbar_X \cdot \Dbar_Y$. 
\end{enumerate} 
Moreover, for each $X\in E(k)\otimes\bR$ the map $Y \mapsto \Dbar_X \cdot \Dbar_Y$ defines a positive semidefinite quadratic form on $E(k) \otimes \bR$, induced by the bilinear form $(Y,Z)\mapsto\Dbar_X\cdot\Dbar_{\<Y,Z\>}$.  
\end{proposition}

	We begin with a lemma:
	
\begin{lemma} \label{positivity}
We have $ \Dbar_X \cdot \Dbar_Y \geq 0$ for all $X,Y \in E(k)\otimes\bR$. 
\end{lemma}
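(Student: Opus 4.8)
The plan is to deduce Lemma~\ref{positivity} from Moriwaki's arithmetic Hodge index theorem, Theorem~\ref{isomorphic}, which already yields $\Dbar_1\cdot\Dbar_2\geq 0$ for normalized, continuous, semipositive, adelic metrizations on ample $\R$-divisors \emph{of the same degree}. By Theorem~\ref{real point divisor theorem} each $\Dbar_X$ with $X\neq 0$ is exactly of this type, on an ample $\R$-divisor $D_X$ of degree $\hat h_E(X)>0$; the only thing standing between us and an immediate application is that $\deg D_X$ and $\deg D_Y$ may differ, so the single idea needed is to rescale one of the two $\R$-points so that the degrees agree.

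Concretely, I would first dispose of the degenerate cases $X=0$ or $Y=0$: there $\Dbar_X=\Dbar_O=(0,0)$ (or likewise for $Y$), hence $\Dbar_X\cdot\Dbar_Y=0$. Assuming $X,Y\neq 0$, put $c:=\sqrt{\hat h_E(Y)/\hat h_E(X)}>0$ and consider $cX\in E(k)\otimes\R$. Straight from the defining formula \eqref{real point divisor} --- whose coefficients $x_i^2-x_i\sum_{j\neq i}x_j$ and $x_ix_j$ are homogeneous of degree $2$ in $x_1,\ldots,x_m$ --- one reads off $\Dbar_{cX}=c^2\,\Dbar_X$ (using the same presentation for both). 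Hence $\Dbar_{cX}$ is again continuous, adelic, semipositive and normalized ($\Dbar_{cX}\cdot\Dbar_{cX}=c^4\,\Dbar_X\cdot\Dbar_X=0$), and its underlying ample $\R$-divisor $c^2D_X$ has degree $c^2\hat h_E(X)=\hat h_E(Y)=\deg D_Y$.

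Now $(\Dbar_{cX},\Dbar_Y)$ satisfies all the hypotheses of Theorem~\ref{isomorphic}, so $\Dbar_{cX}\cdot\Dbar_Y\geq 0$; undoing the scaling via the $\R$-bilinearity of the pairing \eqref{pairing}, one has $\Dbar_{cX}\cdot\Dbar_Y=c^2\,(\Dbar_X\cdot\Dbar_Y)$ with $c^2>0$, so $\Dbar_X\cdot\Dbar_Y\geq 0$.

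I do not expect a genuine obstacle: the substance has already been absorbed into Theorem~\ref{real point divisor theorem} (which packages $\Dbar_X$ as a well-behaved metrization of an ample $\R$-divisor) and into the cited Theorem~\ref{isomorphic}. The one spot to be careful about is the identity $\Dbar_{cX}=c^2\Dbar_X$, which should be recorded explicitly, since it is precisely what lets the degrees be matched before applying the Hodge index inequality. As a self-contained alternative not using Theorem~\ref{isomorphic}, one could instead feed $\Dbar_X+\Dbar_Y$ (semipositive on the ample $\R$-divisor $D_X+D_Y$) into Zhang's inequality (Theorem~\ref{Zhang inequality}): since $(\Dbar_X+\Dbar_Y)\cdot(\Dbar_X+\Dbar_Y)=2\,\Dbar_X\cdot\Dbar_Y$ by normalization, one only needs $e_1(\Dbar_X+\Dbar_Y)+e_2(\Dbar_X+\Dbar_Y)\geq 0$, which reduces to $h_X,h_Y\geq 0$ on all of $B(\Kbar)$ --- the $\R$-divisor analogue of \cite[Proposition~4.3]{DM:variation} at the finitely many singular fibers --- so it costs one extra input compared with the argument above.
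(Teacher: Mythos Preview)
Your proposal is correct and follows essentially the same approach as the paper: apply Theorem~\ref{isomorphic} after invoking Theorem~\ref{real point divisor theorem}, with the alternative route via nonnegativity of the heights also noted (the paper phrases this alternative through Theorem~\ref{two bundles} rather than Theorem~\ref{Zhang inequality}, but the content is the same). In fact you are more careful than the paper's one-line proof, which says the lemma ``follows immediately from Theorem~\ref{isomorphic}'' without mentioning the rescaling $\Dbar_{cX}=c^2\Dbar_X$ needed to match degrees; your explicit treatment of this point is a genuine improvement in exposition.
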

	
\begin{proof}  From Theorem \ref{real point divisor theorem}, both $\Dbar_X$ and $\Dbar_Y$ are normalized, semipositive, continous adelic metrized divisors on $B$ over $K$, so the lemma follows immediately from Theorem \ref{isomorphic}.  Or we can see it as a consequence of Theorem \ref{two bundles} in the Appendix, because the height functions satisfy $h_X, h_Y \geq 0$ at all points of $B(\Kbar)$.
\end{proof}
	
The following lemma is a version of the Cauchy-Schwarz inequality. 
	
\begin{lemma} \label{bilinear}
For each $X \in E(k)\otimes \R$, the intersection
	$$(Y,Z) \mapsto \overline{D}_X\cdot \Dbar_{\<Y,Z\>}$$
is bilinear in $Y, Z \in   E(k)\otimes \R$.  Moreover, 
	$$(\overline{D}_X\cdot \Dbar_{\<Y,Z\>})^2\le (\overline{D}_X\cdot \overline{D}_Y)(\overline{D}_X\cdot \overline{D}_Z)$$
for all $X,Y,Z\in E(k)\otimes\bR$.
\end{lemma}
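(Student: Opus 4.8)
The plan is to obtain both assertions formally from the explicit quadratic description of $\Dbar_W$ in \eqref{real point divisor}, the $\R$-bilinearity of the intersection pairing on admissible metrized $\R$-divisors, and the positivity in Lemma \ref{positivity}. First I would fix independent points $P_1,\dots,P_m \in E(k)$ whose classes form a basis of $E(k)\otimes\R$ and write $X=\sum_i x_iP_i$, $Y=\sum_i y_iP_i$, $Z=\sum_i z_iP_i$. With this fixed presentation, the definition \eqref{real point divisor} exhibits $W=\sum_i w_iP_i \mapsto \Dbar_W$ as a homogeneous degree-two polynomial map in the coordinates $w_i$, valued in admissible metrized $\R$-divisors, with the fixed metrized divisors $\Dbar_{P_i}$ and $\Dbar_{P_i+P_j}$ as coefficients. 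Polarizing this quadratic map, $\Dbar_{\<Y,Z\>}=\frac12(\Dbar_{Y+Z}-\Dbar_Y-\Dbar_Z)$ is an $\R$-linear combination of the $\Dbar_{P_i}$ and $\Dbar_{P_i+P_j}$ whose coefficients are $\R$-bilinear in $(y_i)$ and $(z_i)$. Since $\Dbar_{\<Y,Z\>}=\frac12\Dbar_{Y+Z}-\frac12(\Dbar_Y+\Dbar_Z)$ is a difference of semipositive metrized $\R$-divisors it is admissible, so $\Dbar_X\cdot\Dbar_{\<Y,Z\>}$ is defined; and since $\Dbar_X\cdot(-)$ is $\R$-linear on admissible metrizations by \eqref{pairing}, composing with the bilinear assignment $(Y,Z)\mapsto\Dbar_{\<Y,Z\>}$ shows that $(Y,Z)\mapsto\Dbar_X\cdot\Dbar_{\<Y,Z\>}$ is $\R$-bilinear; it is symmetric because $\Dbar_{\<Y,Z\>}=\Dbar_{\<Z,Y\>}$.

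For the Cauchy--Schwarz inequality, write $\beta(Y,Z):=\Dbar_X\cdot\Dbar_{\<Y,Z\>}$. Homogeneity of \eqref{real point divisor} gives $\Dbar_{2Y}=4\,\Dbar_Y$, hence $\Dbar_{\<Y,Y\>}=\frac12(\Dbar_{2Y}-2\Dbar_Y)=\Dbar_Y$, so $\beta(Y,Y)=\Dbar_X\cdot\Dbar_Y\ge 0$ by Lemma \ref{positivity}. Thus $\beta$ is a symmetric, positive semidefinite bilinear form on $E(k)\otimes\R$, and I would conclude by the usual discriminant argument: for every $t\in\R$,
$$0\le\beta(Y+tZ,Y+tZ)=\beta(Y,Y)+2t\,\beta(Y,Z)+t^2\,\beta(Z,Z),$$
so if $\beta(Z,Z)>0$ the discriminant of this quadratic in $t$ is nonpositive, which is exactly $\beta(Y,Z)^2\le\beta(Y,Y)\beta(Z,Z)$; if $\beta(Z,Z)=0$, nonnegativity of the affine function $t\mapsto\beta(Y,Y)+2t\,\beta(Y,Z)$ forces $\beta(Y,Z)=0$ and the inequality is trivial. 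Translating back, $(\Dbar_X\cdot\Dbar_{\<Y,Z\>})^2\le(\Dbar_X\cdot\Dbar_Y)(\Dbar_X\cdot\Dbar_Z)$.

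I do not anticipate a genuine obstacle: the argument is purely formal once Theorem \ref{real point divisor theorem} and Lemma \ref{positivity} are in hand. The only points requiring care are that the pairing is well defined and $\R$-bilinear on admissible metrized $\R$-divisors — which is built into \eqref{pairing} — and the routine bookkeeping in polarizing \eqref{real point divisor}. The substantive content, namely the positivity of $\Dbar_X\cdot\Dbar_Y$, is imported from Lemma \ref{positivity}, which in turn rests on Moriwaki's arithmetic Hodge index theorem through Theorem \ref{isomorphic}.
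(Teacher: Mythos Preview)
Your proposal is correct and follows essentially the same route as the paper. For bilinearity, the paper carries out the explicit expansion of $\Dbar_X\cdot\Dbar_{\<Y,aZ+bW\>}$ via \eqref{real point divisor} and \eqref{inner product}, while you phrase the same computation more conceptually as polarizing the quadratic map $W\mapsto\Dbar_W$; for Cauchy--Schwarz, both arguments expand $\Dbar_X\cdot\Dbar_{Y+tZ}$ as a quadratic in $t$ and read off the discriminant condition from Lemma~\ref{positivity}.
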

	
\begin{proof}
The bilinearity is an immediate consequence of the bilinearity demonstrated in \eqref{bilinear divisor} and the invariance of the intersection number under isomorphism.
		
Now fix $X, Y, Z \in E(k) \otimes \R$, and consider the function 
	$$f(x):=\overline{D}_X\cdot \overline{D}_{Y+xZ}.$$
By Lemma \ref{positivity} we have $f(x) \ge 0$ for all $x\in\bR$.  From definition \eqref{real point divisor}, we have 
	$$\Dbar_{Y + xZ} = (1-x) \Dbar_Y + x \Dbar_{Y+Z} + (x^2-x) \Dbar_Z.$$
Definition \eqref{inner product} then yields 
	$$f(x)=\overline{D}_X\cdot \overline{D}_{Y}+ 2x \overline{D}_X\cdot \Dbar_{\<Y,Z\>}+ x^2\overline{D}_X\cdot \overline{D}_{Z}\ge 0$$
for all $x\in\bR$.  Thus $f$ is a quadratic polynomial with non-positive discriminant.  The inequality follows.
\end{proof}
	
	We are now ready to prove the proposition.   
	
	\begin{proof}[Proof of Proposition \ref{propertiesofpairing}]
		Fix $X, Y, Z\in E(k)\otimes\R$. The symmetry in $(1)$ follows immediately from the symmetry of the intersection number, shown explicitly in \eqref{line bundle pairing} and extending to \eqref{pairing} by linearity.  The non-negativity is the content of Lemma \ref{positivity}.  
		
		%For $(2)$, note that by Proposition \ref{keyiso} we have $\overline{D}_{Y+Z}+ \overline{D}_{Y-Z}\simeq 2\overline{D}_Y+2\overline{D}_Z$, for any $X, Y, Z\in E(k)$.  The same follows for $X,Y,Z \in E(k)\otimes \R$ by definition \eqref{real point divisor}.  The intersection number is invariant under isomorphism for metrized divisors, and so
		%$$	\overline{D}_X\cdot (\overline{D}_{Y+Z}+ \overline{D}_{Y-Z})=	\overline{D}_X\cdot(2\overline{D}_Y+2\overline{D}_Z).$$
		
		For (2), we use Lemma \ref{bilinear} to compute that
		$$\left(\Dbar_X\cdot \Dbar_{\<X,Y\>}\right)^2 \leq \left(\Dbar_X\cdot\Dbar_X\right)\left(\Dbar_X\cdot\Dbar_Y \right) = 0$$
		because $\Dbar_X$ is normalized.  Therefore, 
		\begin{eqnarray*}
			0 &=& \Dbar_X\cdot \Dbar_{\<X,Y\>}  \\
			&=&  \frac12 \left( \Dbar_X\cdot \Dbar_{X+Y} - \Dbar_X\cdot \Dbar_X - \Dbar_X \cdot \Dbar_Y\right) \\
			&=& \frac12  \left(\Dbar_X\cdot \Dbar_{X+Y}  - \Dbar_X \cdot \Dbar_Y\right), 
		\end{eqnarray*}
		so that 
		$$\Dbar_X\cdot\Dbar_{X+Y} = \Dbar_X \cdot \Dbar_Y.$$
		
Finally, since $\Dbar_Y \iso \Dbar_{\<Y,Y\>}$ from \S\ref{bilinearity}, Lemma \ref{bilinear} then implies that $Y\mapsto \overline{D}_X\cdot \overline{D}_Y$ defines a positive semidefinite quadratic form as claimed.
	\end{proof}

%%%%%%%%
\bigskip
\section{Equivalent formulations of Theorem \ref{Lambda}}
\label{equivalences}
	
Recall that $\mcE \to B$ denotes a non-isotrivial elliptic surface defined over a number field $K$, and let $E$ be the corresponding elliptic curve over the field $k = \Kbar(B)$.   We extend $K$ so that all sections of $\mcE\to B$ are defined over $K$.  {In this section, we prove the equivalence of Theorems \ref{Lambda} and \ref{scheme}.  We also provide in Theorem \ref{TFAE} a list of five additional, equivalent ways to express Theorem \ref{Lambda}.  One of these formulations, stated separately as Theorem \ref{rank drop}, is inspired by Zhang's conjecture in \cite[\S4]{Zhang:ICM}.

\subsection{Zhang's Conjecture for families of abelian varieties} 
In \cite{Zhang:ICM}, Zhang proposed the investigation of a function on the base curve $B$ that detects drops in rank of the specializations of a subgroup of $E(k)$:  given a finitely-generated subgroup $\Lambda$ of $E(k)$ of rank $m\geq 1$, if the quotient $\Lambda/\Lambda_{\mathrm{tors}}$ is generated by $S_1, \ldots, S_m \in E(k)$, let 
\begin{equation} \label{Zhang height function}
	h_{\Lambda}(t) \; := \; \det (\<S_i, S_j\>_t)_{i,j} \; \geq \; 0
\end{equation}
on $B(\Kbar)$, where defined, where $\<\cdot, \cdot\>_t$ is the N\'eron-Tate bilinear form on the specialization $\Lambda_t$ in the fiber $E_t$.    
	
We propose the following result as the analog of \cite[\S4 Conjecture]{Zhang:ICM} for elliptic surfaces; Zhang's conjecture was formulated for geometrically simple families of abelian varieties $\mathcal{A} \to B$ of relative dimension $>1$, and it does not hold as stated for elliptic surfaces \cite[\S4 Remark 3]{Zhang:ICM}.  
	
\begin{theorem} \label{rank drop}
Let $\mcE \to B$ be a non-isotrivial elliptic surface defined over a number field $K$, and let $E$ be the corresponding elliptic curve over the field $k = \Kbar(B)$.  Let $\Lambda \subset E(k)$ be a subgroup of rank $m\geq 2$, with the quotient $\Lambda/\Lambda_{\mathrm{tors}}$ generated by $S_1, \ldots, S_m \in E(k)$.  For each $i = 1, \ldots, m$, let $\Lambda_i \subset \Lambda$ be generated by $\{S_1, \ldots, S_m\}\setminus\{S_i\}$.  There is a constant $\epsilon=\epsilon(\Lambda)>0$ so that 
	$$\{t \in B(\Kbar):  h_{\Lambda_1}(t) + \cdots + h_{\Lambda_m}(t) \leq \eps\}$$ 
is finite. 
\end{theorem}
	
\noindent
We prove below that Theorem \ref{rank drop} is equivalent to Theorems \ref{Lambda} and \ref{scheme}.
	
\begin{remark}\label{hpositivedef}
Note that, for rank 1 groups $\Lambda$, the value $h_\Lambda(t)$ is the canonical height of the generating point $S_t$ in $E_t$.  In general, recall that the N\'eron-Tate height $\hat{h}_{E_t}$ on a smooth fiber over $t \in B(\Kbar)$ defines a positive definite quadratic form in $E_t(\Kbar)\otimes\mathbb{R}$; see e.g. \cite[Ch. VIII, Prop. 9.6]{Silverman:Elliptic}.  Thus,  $h_\Lambda$ will vanish at $t \in B(\Kbar)$ if and only if $\rank \Lambda_t < \rank \Lambda$.  The sum $h_{\Lambda_1}(t) + \cdots + h_{\Lambda_m}(t)$ will be zero if and only if the points $S_{1,t}, \ldots, S_{m,t}$ satisfy (at least) two independent linear relations over $\bZ$ in the fiber $E_t$. 
\end{remark}
	
\begin{remark}
The independence of the points $S_1, \ldots, S_m \in \Lambda$ in Theorem \ref{rank drop} is necessary for the finiteness statement to hold.  Indeed, suppose that $S_m$ is a linear combination of $S_1,\ldots,S_{m-1}$, and suppose that $\{t_n\}\subset B(\Kbar)$ is any infinite non-repeating sequence for which $h_{S_m}(t_n)\to 0$ (for example, we can take $t_n$ where $S_{m,t_n}$ is torsion; see e.g. \cite[Proposition 6.2]{DM:variation}).  It follows from Proposition \ref{regulatorvsreal} that $h_{\Lambda_1}(t_n) + \cdots + h_{\Lambda_m}(t_n)\to 0$.
\end{remark}

\subsection{Equivalences}
The remainder of this section is devoted to proving:
	
\begin{theorem} \label{TFAE}
Let $\mcE \to B$ be a non-isotrivial elliptic surface defined over a number field $K$, and let $E$ be the corresponding elliptic curve over the field $k = \Kbar(B)$.   Let $\Lambda$ be any subgroup of $E(k)$.  The following are equivalent:
\begin{enumerate}
	\item \label{condition Lambda} the conclusion of Theorem \ref{Lambda} holds for all $P,Q\in \Lambda$; 
	\item \label{condition scheme} the conclusion of Theorem \ref{scheme} holds for all sections $C$ of $\mcE^{\ell}$ defined by the graph $t\mapsto (Q_{1,t}, \ldots, Q_{\ell,t})$ for points $Q_1, \ldots, Q_{\ell} \in \Lambda$, for all $\ell\geq 2$; 
	\item \label{condition rank} the conclusion of Theorem \ref{rank drop} holds for this $\Lambda$; 
	\item \label{real nondegenerate}  the biquadratic form $(X,Y) \mapsto \Dbar_X\cdot\Dbar_Y$ on $\Lambda \otimes \R$ is non-degenerate, meaning that $\Dbar_X \cdot \Dbar_Y = 0$ if and only if $X$ and $Y$ are linearly dependent over $\R$; 
	\item \label{same height}  
for any pair $X,Y \in \Lambda \otimes\R$, if the heights satisfy  $h_X(t) = h_Y(t)$ for all $t\in B(\Kbar)$, then $X = \pm Y$;
	\item \label{orthogonal}  for any pair $X,Y \in \Lambda \otimes\R$, if the N\'eron-Tate inner product satisfies $\<X_t , Y_t\>_{E_t} = 0$ for all $t\in B(\Kbar)$ with $E_t$ smooth, then either $X$ or $Y$ is 0;
	\item \label{small sequence}  
for any pair $X,Y \in \Lambda \otimes\R$, if there exists an infinite (non-repeating) sequence of points $t_n \in B(\Kbar)$ for which 
	$$\lim_{n\to\infty}  h_X(t_n) + h_Y(t_n) = 0,$$
then $X$ and $Y$ are linearly dependent over $\bR$. 
\end{enumerate}
\end{theorem}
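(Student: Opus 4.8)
The strategy is to prove the equivalences in two stages: first settle the ``analytic'' block $(4)\Leftrightarrow(5)\Leftrightarrow(6)\Leftrightarrow(7)$ using only the arithmetic Hodge index theorem (Theorem \ref{isomorphic+}), the limit formula (Theorem \ref{two bundles}), the existence of small sequences (Proposition \ref{small points exist}), and the formal properties of the biquadratic form (Proposition \ref{propertiesofpairing}, Lemma \ref{bilinear}); then bootstrap from there to the three ``geometric'' statements $(1)$, $(2)$, $(3)$. Inside the analytic block I would use the elementary identity $\langle U_t+V_t,\,U_t-V_t\rangle_{E_t}=\hat{h}_{E_t}(U_t)-\hat{h}_{E_t}(V_t)$ to get $(5)\Leftrightarrow(6)$ at once: if $h_X=h_Y$ then $X+Y$ and $X-Y$ are fibrewise $E_t$-orthogonal, so $(6)$ forces one of them to vanish, i.e.\ $X=\pm Y$; conversely, from $\langle X_t,Y_t\rangle_{E_t}\equiv 0$ with $X,Y$ both nonzero one gets $h_{X+Y}=h_X+h_Y=h_{X-Y}$, and $(5)$ gives the contradiction. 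For $(4)\Rightarrow(5)$: $h_X=h_Y$ forces $\deg D_X=\hat{h}_E(X)=\hat{h}_E(Y)=\deg D_Y$ by Silverman's specialization theorem, so Theorem \ref{isomorphic+} (its hypothesis holds since $e_1(\Dbar_X)=0$ by Proposition \ref{small points exist}) upgrades $h_X=h_Y$ to $\Dbar_X\cdot\Dbar_Y=0$, whence $(4)$ makes $X,Y$ proportional and comparing heights gives $X=\pm Y$. For $(6)\Rightarrow(4)$: given $\Dbar_X\cdot\Dbar_Y=0$ with $X,Y$ nonzero, rescale $X$ to $X'=(\hat{h}_E(Y)/\hat{h}_E(X))^{1/2}X$ so that $\deg D_{X'}=\deg D_Y$ and $\Dbar_{X'}\cdot\Dbar_Y=0$; Theorem \ref{isomorphic+} then yields $h_{X'}=h_Y$, and if $X,Y$ were independent the pair $(X'+Y,\,X'-Y)$ would violate $(6)$. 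Finally $(4)\Leftrightarrow(7)$ is immediate from Proposition \ref{small points exist} together with Theorem \ref{two bundles}: a sequence small for $\Dbar_X$ (with $X\neq 0$) satisfies $h_Y(t_n)\to\Dbar_X\cdot\Dbar_Y/\deg D_X$, so (using $h_X,h_Y\ge 0$) the vanishing of $\Dbar_X\cdot\Dbar_Y$ is equivalent to the existence of such a sequence with $h_X(t_n)+h_Y(t_n)\to 0$.

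To bring in Theorem \ref{Lambda}, write $F_1(X,Y)=\Dbar_X\cdot\Dbar_Y$ and $F_2(X,Y)=\hat{h}_E(X)\hat{h}_E(Y)-\langle X,Y\rangle_E^2$; both are nonnegative biquadratic forms on $\Lambda\otimes\R$ ($F_1$ by Lemma \ref{positivity}, $F_2$ by Cauchy--Schwarz for the positive-definite form $\hat{h}_E$), and $F_2$ vanishes precisely on the linearly dependent pairs. Hence $(1)$ with some $c>0$ --- a comparison $cF_2\le F_1\le c^{-1}F_2$, which extends from $\Lambda$ to $\Lambda\otimes\R$ by homogeneity and continuity --- is equivalent to $F_1$ having the same zero locus as $F_2$, i.e.\ to $(4)$; the upper bound is the easy half noted in Remark \ref{lower bound}. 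The delicate point is extracting the \emph{uniform} $c$ from $(4)$: for fixed $X\neq 0$ the quadratic forms $Z\mapsto F_i(X,Z)$ are positive semidefinite with radical exactly $\R X$ (using $F_i(X,X)=0$ and that $X$ lies in the radical of the associated bilinear forms --- by Lemma \ref{bilinear} for $F_1$, by direct computation for $F_2$), so they descend to comparable positive-definite forms on $(\Lambda\otimes\R)/\R X$; writing $F_i(X,Z)=F_i(X,Z^{\perp})$ for the $\hat{h}_E$-orthogonal component of $Z$ and normalizing, the comparison reduces to the compact set $\{(X,W):\hat{h}_E(X)=\hat{h}_E(W)=1,\ \langle X,W\rangle_E=0\}$, on which $F_1$ and $F_2$ are continuous and strictly positive, so their ratio is bounded above and below.

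For $(3)$ and $(2)$ I would pass through $(7)$ (when $\operatorname{rank}\Lambda\le 1$ all three are trivially true). With $S_1,\dots,S_m$ generating the torsion-free part of $\Lambda$ and $A_t=(\langle S_{i,t},S_{j,t}\rangle_t)_{i,j}$, the function $h_{\Lambda_i}$ is the principal $(m-1)$-minor of $A_t$ omitting index $i$, so $\sum_i h_{\Lambda_i}(t)=e_{m-1}(\text{eigenvalues of }A_t)$. If $\sum_i h_{\Lambda_i}(t_n)\to 0$ along an infinite sequence, then (Silverman's specialization theorem forces $\{t_n\}$ to have bounded height, and then bounds the entries of $A_{t_n}$) the two smallest eigenvalues of $A_{t_n}$ tend to $0$; passing to a subsequence along which the span of the two bottom eigenvectors converges in $\operatorname{Gr}(2,m)$ to a plane with basis $\vec{x},\vec{y}$, the elements $X=\sum x_iS_i$ and $Y=\sum y_iS_i$ are independent in $\Lambda\otimes\R$ with $h_X(t_n),h_Y(t_n)\to 0$, contradicting $(7)$; the Courant--Fischer inequality run the other way gives $(3)\Rightarrow(7)$. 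For $(2)$, one unwinds the definition: a point of $C\cap T(\mathcal E^{\ell,\{2\}},\epsilon)$, for $C$ the graph of $Q_1,\dots,Q_\ell\in\Lambda$, is a $t$ at which $Q_{1,t},\dots,Q_{\ell,t}$ can be perturbed by points of fibrewise canonical height $\le\epsilon$ so as to satisfy two independent $\bZ$-relations. If the $Q_i$ are dependent in $E(k)$ then $C$ already lies in a flat subgroup scheme of positive codimension and there is nothing to prove; otherwise a failure of $(7)$ is converted --- approximating the two independent real relations provided by $X,Y$ by integer ones and solving for a common set of small-height perturbations on two coordinates --- into an infinite subset of $C\cap T(\mathcal E^{m,\{2\}},\epsilon)$ for $C$ the graph of $S_1,\dots,S_m$, which by irreducibility of $C$ and independence of the $S_i$ contradicts $(2)$, while conversely a sequence in $C\cap T(\mathcal E^{\ell,\{2\}},1/n)$ with $Q_i$ independent yields, by taking limits in $\operatorname{Gr}(2,\ell)$ of the two relations and arguing as in the proof of Proposition \ref{regulatorvsreal} (implication (3)$\Rightarrow$(2)), an independent pair $X,Y\in\Lambda\otimes\R$ with $h_X(t_n),h_Y(t_n)\to 0$, again contradicting $(7)$.

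I expect the main obstacle to be this limiting machinery feeding $(3)$ and $(2)$: organizing the passage from ``finitely many, $n$-dependent integer near-relations'' to ``two honest independent real relations'' via compactness of the Grassmannian, keeping the height of $\{t_n\}$ (hence all the N\'eron--Tate pairings in play) bounded through Silverman's specialization theorems, and choosing the approximating denominators slowly enough that $h_X(t_n)\to 0$ survives the perturbation. The step $(6)\Rightarrow(4)$ and the uniformity of $c$ in $(4)\Rightarrow(1)$ are the other delicate points, but they follow formally from Theorem \ref{isomorphic+} and from the compactness argument on the $\hat{h}_E$-unit sphere described above.
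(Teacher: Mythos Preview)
Your proposal is correct, and in the analytic block it is actually cleaner than the paper's argument. The paper proves a cycle $(7)\Rightarrow(6)\Rightarrow(5)\Rightarrow(7)$, using Proposition~\ref{nondegeneracylevel1} (a consequence of Theorem~\ref{isomorphic+} applied twice, to $\Dbar_X,\Dbar_Y$ and to $\Dbar_{X+Y},\Dbar_Y$) to get from $\Dbar_X\cdot\Dbar_Y=0$ to the identity $\langle X_t,Y_t\rangle_t=\tfrac{\langle X,Y\rangle_E}{\hat h_E(Y)}h_Y(t)$, and then pushes through $(6)$. Your polarization trick $\langle (U+V)_t,(U-V)_t\rangle_t=h_U(t)-h_V(t)$ short-circuits this: it gives $(5)\Leftrightarrow(6)$ immediately, and after one rescaling plus one invocation of Theorem~\ref{isomorphic+} it also closes $(6)\Rightarrow(4)$. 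Your $(1)\Leftrightarrow(4)$ is exactly the paper's compactness argument on the $\hat h_E$-unit sphere.

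For the geometric conditions you diverge more substantially. The paper handles $(3)\Leftrightarrow(7)$ by ad hoc determinant manipulations (expanding $h_M$ along a column, Proposition~\ref{regulatorvsreal}), applied to each $\Lambda_i$ separately; your observation that $\sum_i h_{\Lambda_i}(t)=e_{m-1}(\mathrm{spec}\,A_t)$ together with Courant--Fischer is a genuinely different route that makes the ``two small eigenvalues $\Leftrightarrow$ two small real directions'' mechanism transparent, and the Grassmannian compactness cleanly packages the passage to independent limits $X,Y$. For $(2)$ the paper isolates the content as Proposition~\ref{almost 2 relations} and links it to $(4)$; your sketch links it to $(7)$ instead, but the substance---approximate real relations by integer ones, manufacture small perturbations on two coordinates, and conversely extract two limiting real relations from a sequence of pairs of integer relations---is the same, and the details you flag as the ``main obstacle'' are exactly what that proposition carries out. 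Both approaches rest on the same pillars (Theorem~\ref{isomorphic+}, Proposition~\ref{small points exist}, Silverman specialization to bound $\{t_n\}$ and hence the entries of $A_{t_n}$); yours trades some explicit bookkeeping for linear-algebraic structure.
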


	For the proof, we rely on the work carried out in Sections \ref{R-divisors} -- \ref{biquadratic section}.  Specifically, for each $X \in E(k)\otimes \R$, we can express $X$ as a finite $\R$-linear combination of elements $P_1, \ldots, P_m \in E(k)$.  We appeal to Theorem \ref{real point divisor theorem} to know that $\Dbar_X$ is a well-defined, semipositive, normalized, continuous adelic metrization on $B$, defined over the number field $K$.  Further, $(X,Y) \mapsto \Dbar_X \cdot \Dbar_Y$ is a well-defined semipositive biquadratic form on $E(k)\otimes \R$ by Proposition \ref{propertiesofpairing}.

\subsection{Intersection number 0}
Towards a proof of Theorem \ref{TFAE}, we first examine the consequences of the existence} of a pair $X, Y\in E(k)\otimes \R$ for which $\Dbar_X \cdot \Dbar_Y = 0$.  
	
Recall that $\<\cdot, \cdot\>_t$ denotes the N\'eron-Tate bilinear form on the fiber $E_t(\Kbar)\otimes\R$, and $\<\cdot, \cdot\>_E$ denotes the corresponding form on $E(k)\otimes \R$.  
	
\begin{proposition}\label{nondegeneracylevel1} 
Fix nonzero $X,Y\in E(k)\otimes \R$, and assume that $\Dbar_X \cdot \Dbar_Y = 0$.  Then for all $t\in B(\overline{K})$ for which the fiber $E_t$ is smooth, we have 
	$$	h_X(t) \; =\; \frac{\hat{h}_E(X)}{\hat{h}_E(Y)} \;h_Y(t) \quad\mbox{ and } \quad
		\<X_t,Y_t\>_t \; =\; \frac{\<X,Y\>_E}{\hat{h}_E(Y)}\; h_Y(t).$$
Moreover, we have $\Dbar_{X'} \cdot \Dbar_{Y'} = 0$ for all $X', Y' \in \mathrm{Span}_\R(\{X,Y\})$.
\end{proposition}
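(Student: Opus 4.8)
The plan is to combine Moriwaki's arithmetic Hodge index theorem, in the form of Theorem \ref{isomorphic}, with the positive-semidefiniteness of the biquadratic form $(Z,W)\mapsto\Dbar_Z\cdot\Dbar_W$ recorded in Proposition \ref{propertiesofpairing}. Write $V=\mathrm{Span}_{\R}\{X,Y\}\subset E(k)\otimes\R$. I would first prove the proportionality $h_X(t)=\bigl(\hat h_E(X)/\hat h_E(Y)\bigr)\,h_Y(t)$. By Theorem \ref{real point divisor theorem}, $\Dbar_X$ and $\Dbar_Y$ are continuous, adelic, semipositive and normalized metrizations of ample $\R$-divisors, with $\deg D_X=\hat h_E(X)>0$ and $\deg D_Y=\hat h_E(Y)>0$. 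Put $a=\hat h_E(X)$ and $b=\hat h_E(Y)$; then $a^{-1}\Dbar_X$ and $b^{-1}\Dbar_Y$ are normalized, semipositive, continuous adelic metrizations of ample $\R$-divisors, both of degree $1$, and $(a^{-1}\Dbar_X)\cdot(b^{-1}\Dbar_Y)=(ab)^{-1}\,\Dbar_X\cdot\Dbar_Y=0$. Theorem \ref{isomorphic} then says these two metrized divisors are isomorphic, so they define the same height function; since heights scale linearly in the metrized divisor, $a^{-1}h_X(t)=b^{-1}h_Y(t)$ for every $t\in B(\Kbar)$, which on a smooth fiber $E_t$ (where $h_X(t)=\hat h_{E_t}(X_t)$ and $h_Y(t)=\hat h_{E_t}(Y_t)$) is the first asserted identity.

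Next I would establish the ``moreover'' clause, that $\Dbar_{X'}\cdot\Dbar_{Y'}=0$ for all $X',Y'\in V$. Here the key facts are that $\Dbar_Z\cdot\Dbar_Z=0$ for \emph{every} $Z\in E(k)\otimes\R$ (normalization, Theorem \ref{real point divisor theorem}) and that for each fixed $U$ the quadratic form $q_U\colon Z\mapsto\Dbar_U\cdot\Dbar_Z$ is positive semidefinite (Proposition \ref{propertiesofpairing}); hence the zero locus of $q_U$ is a linear subspace of $E(k)\otimes\R$ (its radical). Since $q_X(X)=0$ and $q_X(Y)=\Dbar_X\cdot\Dbar_Y=0$, the form $q_X$ vanishes on all of $V$, i.e.\ $\Dbar_X\cdot\Dbar_Z=0$ for all $Z\in V$; the same argument with $Y$ in place of $X$ gives $\Dbar_Y\cdot\Dbar_Z=0$ for all $Z\in V$. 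Now take any $Z\in V$: the form $q_Z$ is positive semidefinite, vanishes at $Z$, and vanishes at both $X$ and $Y$ by the previous sentence, hence vanishes on $V$. Therefore $\Dbar_{X'}\cdot\Dbar_{Y'}=0$ for all $X',Y'\in V$.

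Finally, for the inner-product identity I would polarize. If $X+Y=0$, the identity follows from the first identity together with $h_{-X}=h_X$ (the metrization \eqref{real point divisor} is even in its coefficients) and $\langle X,-X\rangle_E=-\hat h_E(X)$; so assume $X+Y\neq0$. Since $X+Y$ and $Y$ both lie in $V$, the ``moreover'' clause gives $\Dbar_{X+Y}\cdot\Dbar_Y=0$, and applying the argument of the first paragraph to the pair $(X+Y,Y)$ yields $h_{X+Y}(t)=\bigl(\hat h_E(X+Y)/\hat h_E(Y)\bigr)\,h_Y(t)$. On a smooth fiber $E_t$, the quadratic-form identity $\hat h_{E_t}(X_t+Y_t)=\hat h_{E_t}(X_t)+2\langle X_t,Y_t\rangle_t+\hat h_{E_t}(Y_t)$ (used in the proof of Theorem \ref{real point divisor theorem}) gives $\langle X_t,Y_t\rangle_t=\tfrac12\bigl(h_{X+Y}(t)-h_X(t)-h_Y(t)\bigr)$; substituting the two proportionalities and using $\hat h_E(X+Y)-\hat h_E(X)-\hat h_E(Y)=2\langle X,Y\rangle_E$ produces exactly $\langle X_t,Y_t\rangle_t=\bigl(\langle X,Y\rangle_E/\hat h_E(Y)\bigr)\,h_Y(t)$.

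The main obstacle is really the ``moreover'' step: one must upgrade the single hypothesis $\Dbar_X\cdot\Dbar_Y=0$ to vanishing on the whole plane $V$, and this is exactly where the positive-semidefiniteness of the $q_U$ --- ultimately the Cauchy--Schwarz inequality of Lemma \ref{bilinear} --- is used, to force each $q_U$ to vanish on a subspace rather than merely at a point. Everything else is an application of the Hodge index theorem and elementary bookkeeping with the quadratic form $\hat h_E$.
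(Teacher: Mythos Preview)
Your proof is correct and follows essentially the same approach as the paper: rescale and apply the Hodge index theorem (the paper invokes Theorem \ref{isomorphic+}, you invoke Theorem \ref{isomorphic} directly) for the first identity, then propagate the vanishing over $V=\mathrm{Span}_\R\{X,Y\}$ via Proposition \ref{propertiesofpairing}, and finally polarize with the pair $(X+Y,Y)$ to obtain the inner-product formula. The only stylistic difference is in the ``moreover'' step: the paper uses part (3) of Proposition \ref{propertiesofpairing} to get $\Dbar_{X+Y}\cdot\Dbar_X=\Dbar_{X+Y}\cdot\Dbar_Y=0$ and then expands $\Dbar_{xX+yY}\cdot\Dbar_{aX+bY}$ explicitly via \eqref{real point divisor}, whereas you argue abstractly that the zero locus of a positive semidefinite form is its radical --- but both arguments rest on the same Cauchy--Schwarz inequality of Lemma \ref{bilinear}, so this is not a genuinely different route.
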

	
\begin{proof}	
Assume that $\overline{D}_X\cdot \overline{D}_Y=0$.  From Theorem \ref{real point divisor theorem}, each of $\Dbar_X$ and $\Dbar_Y$ is a continuous, normalized, semipositive adelic metrization on an $\R$-divisor on $B$.  The degree of $D_X$ (respectively $D_Y$) is $\hat{h}_E(X)$ (respectively, $\hat{h}_E(Y)$).  The relation between the heights $h_X$ and $h_Y$ follows immediately from Theorem \ref{isomorphic+}.
		
Using now part (2) of Proposition \ref{propertiesofpairing} our assumption that $\overline{D}_X\cdot \overline{D}_Y=0$ implies that $\Dbar_{X+Y}\cdot \Dbar_Y= \overline{D}_{X+Y}\cdot \overline{D}_X = 0$, and so
\begin{eqnarray*}
\Dbar_{xX + yY} \cdot \Dbar_{aX + bY} &=&  \left( (x^2-xy)\Dbar_X + xy \Dbar_{X+Y} + (y^2-xy) \Dbar_Y \right) \\
		&&  \qquad \cdot \left( (a^2-ab)\Dbar_X + ab \Dbar_{X+Y} + (b^2-ab) \Dbar_Y \right) \\
		&=& 0
\end{eqnarray*}
for all $x, y, a, b, \in \R$.  In particular, we have 
	$$	\hat{h}_{E_t}(X_t+Y_t) =\frac{\hat{h}_E(X+Y)}{\hat{h}_E(Y)}\, \hat{h}_{E_t}(Y_t).$$
so that
	$$\hat{h}_{E_t}(X_t+Y_t)=\hat{h}_{E_t}(X_t)+\<X_t,Q_t\>_t +\hat{h}_{E_t}(Y_t)$$ 
implies
	$$\<X_t,Y_t\>_t \; =\; \frac{\<X,Y\>_E}{\hat{h}_E(Y)}\; h_Y(t)$$
for all $t \in B(\Kbar)$ for which $E_t$ is smooth.
\end{proof}
	
The following proposition extends the observations of Proposition \ref{regulatorvsreal} to two independent relations.
	
\begin{proposition}  \label{almost 2 relations}
Let $\Lambda$ be a subgroup of $E(k)$ generated by independent, non-torsion elements $P_1, \ldots, P_m$, with $m\geq 2$.  The following are equivalent:  
\begin{enumerate}
	\item  there exist an infinite, non-repeating sequence $t_n \in B(\Kbar)$ and points $p_{i,n} \in E_{t_n}(\Kbar)$ for $i = 1, \ldots m$, for which $\hat{h}_{E_{t_n}}(p_{i,n}) \to 0$ as $n\to \infty$, and the points 
		$$P_{1, t_n} - p_{1,n}, \ldots, P_{m,t_n} - p_{m,n}$$
	satisfy two independent linear relations on $E_{t_n}$;  
	\item  there exist independent $X, Y \in \Lambda \otimes \R$ for which 
		$$\Dbar_X \cdot \Dbar_Y = 0.$$
\end{enumerate}
\end{proposition}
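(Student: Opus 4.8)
The plan is to prove the two implications separately; both are two-relation refinements of the arguments behind Proposition \ref{regulatorvsreal}, combined with Silverman's specialization theorems \cite[Theorem A]{Silverman:specialization}, \cite[Theorem B]{Silverman:specialization}, Proposition \ref{nondegeneracylevel1}, and Zhang's inequality (Theorem \ref{Zhang inequality}). For $(2)\Rightarrow(1)$, let $X,Y\in\Lambda\otimes\R$ be independent with $\Dbar_X\cdot\Dbar_Y=0$, and set $V=\mathrm{Span}_\R(X,Y)$, which is $2$-dimensional. By Proposition \ref{small points exist}, $e_1(\Dbar_X)=0$, so there is an infinite non-repeating sequence $\{t_n\}\subset B(\Kbar)$, which after discarding the finitely many bad fibers lies in the smooth locus, with $h_X(t_n)\to0$; since $\Dbar_X\cdot\Dbar_{X'}=0$ for every $X'\in V$ (Proposition \ref{nondegeneracylevel1}), that proposition also gives $h_{X'}(t_n)=\tfrac{\hat{h}_E(X')}{\hat{h}_E(X)}h_X(t_n)\to0$ for every $X'\in V$ (here $\hat{h}_E$ is positive definite on the torsion-free group $\Lambda$). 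Because $V$ is $2$-dimensional inside $\Lambda\otimes\R$ (spanned by the images of $P_1,\dots,P_m$), after relabeling the $P_i$ the projection of $V$ onto the first two coordinates is an isomorphism, so $V$ has a basis $\tilde X=P_1+\sum_{i\ge3}c_iP_i$, $\tilde Y=P_2+\sum_{i\ge3}d_iP_i$, and in particular $h_{\tilde X}(t_n),h_{\tilde Y}(t_n)\to0$. Since $\hat{h}_E(\tilde X)>0$, \cite[Theorem B]{Silverman:specialization} forces $\{t_n\}$ to have bounded height on $B$, hence \cite[Theorem A]{Silverman:specialization} bounds $\hat{h}_{E_{t_n}}(P_{i,t_n})$ uniformly in $n$; then, running the proof of Proposition \ref{regulatorvsreal}, $(2)\Rightarrow(3)$, on $\tilde X$ and on $\tilde Y$ in parallel, we choose integers $a_{i,n}/n\to c_i$, $b_{i,n}/n\to d_i$ and points $u_n,w_n\in E_{t_n}(\Kbar)$ with $nu_n=\sum_{i\ge3}a_{i,n}P_{i,t_n}$, $nw_n=\sum_{i\ge3}b_{i,n}P_{i,t_n}$, so that $p_{1,n}=P_{1,t_n}+u_n$, $p_{2,n}=P_{2,t_n}+w_n$, $p_{i,n}=O_{t_n}$ for $i\ge3$ satisfy $\hat{h}_{E_{t_n}}(p_{i,n})\to0$, and the coefficient vectors $(n,0,a_{3,n},\dots,a_{m,n})$ and $(0,n,b_{3,n},\dots,b_{m,n})$ give two $\bZ$-relations among $P_{1,t_n}-p_{1,n},\dots,P_{m,t_n}-p_{m,n}$ that are independent since their leading $2\times2$ minor is $n^2\ne0$. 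Undoing the relabeling (harmless, as (1) is symmetric in the generators) yields (1).

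For $(1)\Rightarrow(2)$, write the two given independent relations as $\sum_i a_{i,n}(P_{i,t_n}-p_{i,n})=O_{t_n}=\sum_i b_{i,n}(P_{i,t_n}-p_{i,n})$ with $a_{\cdot,n},b_{\cdot,n}\in\bZ^m$ linearly independent and $\hat{h}_{E_{t_n}}(p_{i,n})\to0$. Lemma \ref{one relation}, applied to one of the relations, shows $\{t_n\}$ has bounded height, so $\hat{h}_{E_{t_n}}(P_{i,t_n})$ is bounded uniformly in $n$. The device to extract a fixed independent pair is to pass to the $2$-plane $W_n:=\mathrm{Span}_\R(a_{\cdot,n},b_{\cdot,n})\subset\R^m$ and choose an orthonormal basis $e_n,f_n$ of $W_n$ for the standard inner product; writing $e_n,f_n$ as real combinations of $a_{\cdot,n},b_{\cdot,n}$, the two relations yield in $E_{t_n}(\Kbar)\otimes\R$ the identities $\sum_i e_{n,i}P_{i,t_n}=\sum_i e_{n,i}p_{i,n}$ and the analogue for $f_n$, whose $\hat{h}_{E_{t_n}}$-values are $\le\big(\sum_i\hat{h}_{E_{t_n}}(p_{i,n})^{1/2}\big)^2\to0$ by the triangle inequality for $\hat{h}_{E_{t_n}}^{1/2}$, using $|e_{n,i}|\le1$. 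Passing to a subsequence with $e_n\to e$ and $f_n\to f$ on the unit sphere, the limits remain orthonormal, hence $\R$-independent; set $X=\sum_i e_iP_i$, $Y=\sum_i f_iP_i\in\Lambda\otimes\R$, which are independent. Using the bound on $\hat{h}_{E_{t_n}}(P_{i,t_n})$ and the triangle inequality once more, $h_X(t_n)=\hat{h}_{E_{t_n}}(X_{t_n})\to0$ and $h_Y(t_n)\to0$ (Theorem \ref{real point divisor theorem}). Finally $\Dbar_X+\Dbar_Y$ is semipositive, ample and normalized, with $(\Dbar_X+\Dbar_Y)\cdot(\Dbar_X+\Dbar_Y)=2\,\Dbar_X\cdot\Dbar_Y\ge0$ and essential minimum $e_2\ge0$, while $\{t_n\}$ forces $e_1(\Dbar_X+\Dbar_Y)\le0$; hence $e_1=0$, and Zhang's inequality (Theorem \ref{Zhang inequality}) gives $\Dbar_X\cdot\Dbar_Y=0$ (alternatively, rescale $\Dbar_Y$ to match $\deg D_X$ and invoke Theorem \ref{isomorphic+}).

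I expect the main obstacle to lie in the limiting step of $(1)\Rightarrow(2)$. The coefficient vectors of the $n$-dependent relations are not canonically normalized, and normalizing each relation by its own Euclidean norm can collapse two independent relations to a single limiting direction; replacing the two relations by an orthonormal basis of the $2$-plane $W_n$ they span is exactly what rules this out. The remaining technical points are that the identity $\sum_i e_{n,i}P_{i,t_n}=\sum_i e_{n,i}p_{i,n}$ survives in $E_{t_n}(\Kbar)\otimes\R$ (legitimate because real linear combinations make sense there, even though the relations themselves are integral), that the limit pair $X,Y$ is genuinely $\R$-independent, and that the a priori bound on $\hat{h}_{E_{t_n}}(P_{i,t_n})$—which comes from Silverman's theorems once Lemma \ref{one relation} provides bounded height—is enough to push all the heights to $0$.
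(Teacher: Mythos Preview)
Your proof is correct and follows the same overall strategy as the paper: for $(2)\Rightarrow(1)$ you reduce to a well-chosen basis of the plane $\mathrm{Span}_\R(X,Y)$, find a common small sequence via Proposition~\ref{nondegeneracylevel1}, and run the construction from Proposition~\ref{regulatorvsreal} twice in parallel; for $(1)\Rightarrow(2)$ you normalize the two integer relations, extract a limiting independent pair $X,Y$ with $h_X(t_n),h_Y(t_n)\to 0$, and conclude via Zhang's inequality. One small slip: $\Dbar_X+\Dbar_Y$ is not \emph{normalized} a priori (its self-intersection is $2\,\Dbar_X\cdot\Dbar_Y$, which is what you are computing), but your argument does not actually use normalization, so this is harmless; your alternative invocation of Theorem~\ref{isomorphic+} is cleaner anyway.

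The one substantive difference from the paper is your normalization device in $(1)\Rightarrow(2)$. The paper performs a Gaussian elimination on the pair $(a_{\cdot,n},b_{\cdot,n})$ to place leading $1$'s in two distinct slots and then asserts the remaining rational coefficients form bounded sequences, which requires a moment's thought (one must choose pivots along a subsequence where a fixed $2\times2$ minor stays bounded away from zero). Your orthonormal-basis trick on $W_n=\mathrm{Span}_\R(a_{\cdot,n},b_{\cdot,n})$ sidesteps this entirely: it gives unit vectors automatically, and compactness of the Stiefel manifold of orthonormal $2$-frames guarantees the limit pair $(e,f)$ remains orthonormal, hence independent. This is a cleaner way to secure both boundedness and limiting independence in one stroke, at the modest cost of working momentarily in $E_{t_n}(\Kbar)\otimes\R$ rather than with integral relations.
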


	\begin{proof}
		Assume first that $\Dbar_X \cdot \Dbar_Y = 0$.  Write $X = x_1P_1 + \ldots + x_mP_m$ and $Y = y_1P_1 + \cdots + y_mP_m$ for linearly independent coefficient vectors $\vec{x}, \vec{y} \in \R^m$.   From Proposition \ref{nondegeneracylevel1}, we can replace $X$ and $Y$ by linear combinations of $X$ and $Y$ (and relabel the points $P_i$ if needed) and so assume that $x_1 = 1 = y_m$ and $x_m = y_1 = 0$.  From Theorem \ref{real point divisor theorem}, we know that $\Dbar_X$ and $\Dbar_Y$ are normalized, semipositive, continuous adelic metrizations.  By Proposition \ref{small points exist}, we know that $e_1(\Dbar_X) = e_1(\Dbar_Y) = 0$.  Theorem \ref{isomorphic+} then implies that there is an infinite non-repeating sequence $\{t_n\}\subset B(\Kbar)$ so that 
		\begin{align}\label{end1}
			h_X(t_n) + h_Y(t_n) \to 0
		\end{align}
		as $n\to \infty$.  
		We now apply Proposition \ref{regulatorvsreal} to each of $h_X$ and $h_Y$ to show that small perturbations of the specializations $P_{i,t_n}$ must satisfy two independent relations in the fibers $E_{t_n}(\Kbar)$.  More precisely, we choose integers $a_{i,n}, b_{i,n}$ for each $n\geq 1$ and each $i =2, \ldots, m-1$, so that 
		$$\frac{a_{i,n}}{n} \to x_i \quad\mbox{ and } \quad \frac{b_{i,n}}{n} \to y_i$$
		as $n\to \infty$.  As in the proof of Proposition \ref{regulatorvsreal} (2) $\implies$ (3), we choose $p_n \in E_{t_n}(\Kbar)$ so that 
		$$n \, p_n = a_{2,n} P_2 + \cdots + a_{m-1,n}P_{m-1}.$$
		Set $p_{1,n} = P_{1,t_n} + p_n \in E_{t_n}(\Kbar)$.  Then 
		$$\hat{h}_{E_{t_n}}(p_{1,n}) = \hat{h}_{E_{t_n}}\left( P_{1,t_n} + \frac{1}{n} (a_{2,n} P_2 + \cdots + a_{m-1,n}P_{m-1}) \right) \to 0,$$
		and $\{P_{1,t_n} - p_{1,t_n}, P_{2,t_n}, \ldots, P_{m-1,t_n}\}$ satisfy a linear relation.  On the other hand, we can repeat the same argument with $Y$ and find point $q_n \in E_{t_n}(\Kbar)$ so that 
		$$n \, q_n = b_{2,n} P_2 + \cdots + b_{m-1,n}P_{m-1}$$
		and set $p_{m,n} = P_{m,t_n} + q_n$.  Then 
		$$\hat{h}_{E_{t_n}}(p_{m,n}) = \hat{h}_{E_{t_n}}\left(  \frac{1}{n} (b_{2,n} P_2 + \cdots + b_{m-1,n}P_{m-1}) + P_{m,t_n} \right) \to 0,$$
		and $\{P_{2,t_n}, \ldots, P_{m-1,t_n}, P_{m,t_n} - p_{m,t_n}\}$ satisfy a linear relation.  It follows that the points
		$$\{P_{1,t_n} - p_{1,t_n}, P_{2,t_n}, \ldots, P_{m-1,t_n}, P_{m,t_n} - p_{m,t_n}\}$$
		satisfy two independent linear relations in $E_{t_n}(\Kbar)$ for all $n$.  
		
		For the converse direction, we assume there are an infinite, non-repeating sequence $t_n \in B(\Kbar)$ and points $p_{i,n} \in E_{t_n}(\Kbar)$ for $i = 1, \ldots m$, for which $\hat{h}_{E_{t_n}}(p_{i,n}) \to 0$ as $n\to \infty$, and such that 
		$$\{P_{1, t_n} - p_{1,n}, \ldots, P_{m,t_n} - p_{m,n}\}$$
		satisfy two independent linear relations on $E_{t_n}$.   From Lemma \ref{one relation}, we know that the sequence $\{t_n\}$ must have bounded height.  Choose integers $a_{i,n}, b_{i,n}$ for $n\geq 1$ and $i = 1, \ldots, m$ so that the independent relations are expressed as 
		$$a_{1,n}(P_{1, t_n} - p_{1,n}) + \cdots + a_{m,n}( P_{m,t_n} - p_{m,n}) = O_{t_n}$$
		and 
		$$b_{1,n}(P_{1, t_n} - p_{1,n}) + \cdots + b_{m,n}( P_{m,t_n} - p_{m,n}) = O_{t_n}$$
		Relabeling the points if necessary, we can rewrite the expressions as 
		$$(P_{1, t_n} - p_{1,n}) + r_{2,n} (P_{2, t_n} - p_{2,n}) + \cdots + r_{m,n}( P_{m,t_n} - p_{m,n}) = O_{t_n}$$
		and 
		$$r_{1,n}'(P_{1, t_n} - p_{1,n}) + \cdots + r_{m-1,n}'( P_{m-1,t_n} - p_{m-1,n}) + ( P_{m,t_n} - p_{m,n}) = O_{t_n}$$
		for bounded sequences of rational numbers $r_{2,n}, \ldots, r_{m,n}$ and $r_{1,n}', \ldots, r_{m-1,n}'$. Passing to a subsequence we may assume that 
		$$r_{i,n} \to x_i \in \R \quad \mbox{ and } \quad r_{i,n}' \to y_i \in \R$$
		for each $i$.  Then, recalling that $\{t_n\}$ has bounded height and that the perturbations $p_{i,n}$ have heights tending to 0 and using \cite[Theorem A]{Silverman:specialization} to infer that $\{\hat{h}_{E_{t_n}}( P_{i,t_n})\}_n$ are bounded for each $i$, we conclude that 
		$$h_X(t_n)  \to 0 \quad \mbox{ and } \quad h_Y(t_n) \to 0$$
		along this subsequence, for $X = P_1 + x_2P_2 + \ldots + x_mP_m$ and $Y = y_1P_1 + \cdots + y_{m-1}P_{m-1} + P_m$.  From Theorem \ref{isomorphic+}, we have that $\Dbar_X \cdot \Dbar_Y = 0$.
	\end{proof}

\subsection{Proof of Theorem \ref{TFAE}}  Throughout this proof, we fix a finitely-generated subgroup $\Lambda \subset E(k)$.  Assume it is of rank $m\ge 1$ with $\Lambda/\Lambda_{\mathrm{tors}}$ generated by $P_1, \ldots, P_m \in E(k)$. 
	
\bigskip
\noindent\fbox{$\eqref{condition Lambda}\iff \eqref{real nondegenerate}$}
Recall that the N\'eron-Tate height $\hat{h}_E$ on $\Lambda$ extends to a positive definite quadratic form on $\Lambda\otimes \R$.  It follows (by Cauchy-Schwarz) that the N\'eron-Tate regulator 
	$$R_E(X,Y) := \hat{h}_E(X)\hat{h}_E(Y) - \<X,Y\>_E^2 \;\geq \; 0$$ 
extends to a biquadratic form on $\Lambda\otimes\R$ satisfying $R_E(X,Y) = 0$ if and only if $X$ and $Y$ are linearly dependent over $\R$.  As 
	$$F(X,Y) := \Dbar_X \cdot \Dbar_Y$$
is also biquadratic on $\Lambda\otimes\R$ from Proposition \ref{propertiesofpairing}, and it satisfies $F(X,X) = 0$ for all $X \in E(k) \otimes \R$, the upper bound on $\Dbar_X \cdot \Dbar_Y$ in Theorem \ref{Lambda} follows.  Condition \eqref{condition Lambda} is then equivalent to the statement that $F(X,Y)=0$ if and only if $X$ and $Y$ are linearly dependent over $\R$.  
	
In details, if we assume \eqref{condition Lambda}, and if the pair $X = \sum_{i=1}^m x_i P_i$ and $Y = \sum_{i=1}^m y_i P_i$ with $P_i \in \Lambda$ satisfy $\Dbar_X\cdot \Dbar_Y = 0$, then we can approximate by rational combinations $P_n = \frac1n \sum a_{i,n} P_i \to X$ and $Q_n = \frac1n \sum b_{i,n} P_i \to Y$ with integers $a_{i,n}, b_{i,n}$, and compute that 
	$$\Dbar_{P_n} \cdot \Dbar_{Q_n} = \frac{1}{n^4} \,\Dbar_{\sum a_{i,n}P_i} \cdot \Dbar_{\sum b_{i,n}P_i} \; \geq \; \frac{c}{n^4} \, R_E\left(\sum a_{i,n}P_i, \sum b_{i,n}P_i \right) =  c \, R_E(P_n, Q_n).$$
Letting $n\to \infty$ shows that $R_E(X,Y)=0$, implying that $X,Y$ are linearly dependent over $\R$.  Now assume \eqref{real nondegenerate}, so that $F(\cdot,\cdot)$ is nondegenerate on the finite-dimensional $V = \Lambda \otimes \R$.  Using the inner product $\<\cdot, \cdot\>_E$ on $V$ and associated norm $\|\cdot \| = \hat{h}_E(\cdot)^{1/2}$, we have (by continuity and compactness) uniform positive upper and lower bounds on $\Dbar_X \cdot \Dbar_Y$ over all pairs $X,Y\in V$ satisfying $\<X, Y\>_E = 0$ and $\|X\| = \|Y\| = 1$.  On the other hand, $R_E(X,Y) = 1$ for all such pairs, and so there is a positive constant $c = c(V)$ so that 
\begin{equation} \label{comparable}
	c \, R_E(X,Y) \leq \Dbar_X \cdot \Dbar_Y \leq c^{-1} R_E(X,Y)
\end{equation}
all pairs $X,Y\in V$ satisfying $\<X, Y\>_E = 0$ and $\|X\| = \|Y\| = 1$.  By scaling the points, this extends to orthogonal pairs of any norm.  For an arbitrary pair $X,Y \in V$, we write $Y = Y' + xX$ with $\<Y',X\>_E=0$ and $x\in \R$, and observe that $\Dbar_X\cdot \Dbar_Y = \Dbar_X \cdot \Dbar_{Y'}$ from Proposition \ref{propertiesofpairing}.  We also have $R_E(X, Y'+xX) = R_E(X,Y')$ and so \eqref{comparable} holds for all pairs $X$ and $Y$ in $V$.

\bigskip
\noindent\fbox{$\eqref{real nondegenerate}\iff \eqref{small sequence}$}
Fix any pair $X, Y \in \Lambda\otimes \R$, and express $X$ and $Y$ as $\R$-linear combinations of elements $P_1, \ldots, P_m \in \Lambda$.  Theorem \ref{real point divisor theorem} shows that $\Dbar_X$ and $\Dbar_Y$ are normalized, continuous, semipositive adelic metrizations on $\R$-divisors, and Proposition \ref{small points exist} shows that each has essential minimum equal to $0$.  Theorem \ref{isomorphic+} then implies that $\Dbar_X \cdot \Dbar_Y=0$ if and only if the heights $h_X$ and $h_Y$ have a common small sequence in $B(\Kbar)$.
	
\bigskip
\noindent\fbox{$\eqref{condition rank} \iff \eqref{small sequence}$}
Assume that \eqref{small sequence} holds.  We aim to prove the conclusion of Theorem \ref{rank drop} for this $\Lambda$.  Suppose that there is an infinite, non-repeating sequence $t_n\in B(\overline{K})$ with $h_{\Lambda_i}(t_n)\to 0$ for all $i=1,\ldots,m$.  From Lemma \ref{regulatorvsreal}, we may choose point $X_1 \in \Lambda_1$ so that $\liminf_{n \to\infty} h_{X_1}(t_n) = 0$.  Pass to a subsequence so that $\lim_{n\to\infty} h_{X_1}(t_n) = 0$.  For each $i = 2, \ldots, m$, we successively apply Lemma \ref{regulatorvsreal} to find $X_i \in \Lambda_i$ for which $\liminf_{n \to\infty} h_{X_i}(t_n) = 0$ and then pass to a further subsequence so that $\lim_{n\to\infty} h_{X_i}(t_n) = 0$.  In this way, we have an infinite, non-repeating sequence of points $t_n \in B(\Kbar)$ so that $\lim_{n\to\infty} h_{X_i}(t_n) = 0$ for all $i$.  However, as $\bigcap_{i=1}^{m}\Lambda_i=\{0\}$, at least two of the $X_i$ must be independent.  This contradicts \eqref{small sequence}. 
	
Assume now that \eqref{condition rank} holds.  Fix a pair of independent nonzero points $X, Y \in \Lambda\otimes \R$, and suppose that there is an infinite nonrepeating sequence $t_n\in B(\Kbar)$ so that 
	$$h_X(t_n)+h_Y(t_n)\to 0.$$  
Then $h_X(t_n)\to 0$ and $h_Y(t_n)\to 0$. 
We write 
\begin{align*}
	\begin{split}
	X&=a_1P_1+\cdots+a_mP_m\\
	Y&=b_1P_1+\cdots+b_mP_m,
\end{split}
\end{align*}
with $a_i,b_j\in \bR$ and independent $P_i\in \Lambda$. 
We want to show that 
	\begin{align}\label{alllat}
		\displaystyle\liminf_{n\to\infty}h_{\Lambda_i}(t_n)=0
	\end{align}
for all $i=1,\ldots,m$, contradicting \eqref{condition rank}. Fix $i\in\{1,\ldots,m\}$.  If $b_i=0$, then $Y\in\Lambda_i\otimes\bR$ and \eqref{alllat} follows from Lemma \ref{regulatorvsreal}.  If on the other hand $b_i\neq 0$, then $X-\frac{a_i}{b_i}Y\in\Lambda_i\otimes\bR$ and by the parallelogram law we also have
	$$h_{X-\frac{a_i}{b_i}Y}(t_n)\to 0.$$
As before, equation \eqref{alllat} follows by Lemma \ref{regulatorvsreal}.

\bigskip
\noindent\fbox{$\eqref{small sequence} \implies \eqref{orthogonal}$}
Assume that \eqref{small sequence} holds. Fix $X,Y\in \Lambda\otimes\bR$, and suppose that $\<X_t,Y_t\>_t=0$ for all $t\in B(\Kbar)$ for which the fiber $E_t$ is smooth.  By Proposition \ref{small points exist} there is an infinite sequence $t_n\in B(\Kbar)$ with $h_{X-Y}(t_n)\to 0$.  Since $\<X_{t_n},Y_{t_n}\>_{t_n}=0$ we have 
	\begin{eqnarray*}
		h_X(t_n)+h_Y(t_n)&=& h_X(t_n)-2\<X_{t_n},Y_{t_n}\>_{t_n}+h_Y(t_n)\\
		&=&  h_{X-Y}(t_n) \; \to \; 0.  
	\end{eqnarray*}
Thus by \eqref{small sequence} we get that  either $X$ or $Y$ is $0$ or there are non-zero $a,b\in\bR$ such that $aX=bY$. In the latter case, our assumption that $\<X_t,Y_t\>_t=0$ for all $t$ implies that both $X$ and $Y$ are $0$. The assertion follows.

\bigskip
\noindent\fbox{$\eqref{orthogonal} \implies \eqref{same height}$}
Assume that \eqref{orthogonal} holds.  Fix $X,Y\in \Lambda\otimes\bR$, and suppose that $h_X(t)=h_Y(t)$ for all $t\in B(\Kbar)$.  If $X=0$ or $Y=0$ then our assumption that $h_X(t)=h_Y(t)$ for all $t$ implies that $X=Y=0$ in $\Lambda\otimes\bR$. Thus we may assume that both $X$ and $Y$ are non-zero.  Since $h_X(t)=h_Y(t)$ for all $t$, Silverman's specialization theorem \cite[Theorem B]{Silverman:specialization} implies that $\hat{h}_E(X)=\hat{h}_E(Y)$.   From Theorem \ref{isomorphic+} we know that $\Dbar_X \cdot \Dbar_Y = 0$, and therefore, from Proposition \ref{nondegeneracylevel1}, we have 
	$$\<X_t,Y_t\>_t=\frac{\<X,Y\>_E}{\hat{h}_E(Y)} \, \hat{h}_{E_t}(Y_t),$$ 
or equivalently, 
	$$	\left\<X_t- \frac{\<X,Y\>_E}{\hat{h}_E(Y)} \, Y_t,\; Y_t\right\>_t=0,$$
for all $t$. By our assumption \eqref{orthogonal} and since $Y\neq 0$, we have 
	$$X =  \frac{\<X,Y\>_E}{\hat{h}_E(Y)} \, Y.$$
Recalling that $\hat{h}_E(X)=\hat{h}_E(Y)$, we get $X=\pm Y$ in $\Lambda \otimes \R$, as claimed.

\bigskip
\noindent\fbox{$\eqref{same height}\implies  \eqref{small sequence}$} 
Suppose there exist nonzero $X,Y\in \Lambda\otimes\bR$ and an infinite, non-repeating sequence $t_n\in B(\Kbar)$ for which $h_X(t_n)+h_Y(t_n)\to 0$.  By Theorem \ref{real point divisor theorem} we know that both $h_X$ and $h_Y$ are induced by normalized semipositive adelic metrizations on ample divisors $D_X$ and $D_Y$ on $B$, of degrees $\hat{h}_E(X)$ and $\hat{h}_E(Y)$, respectively.  We may thus apply Theorem \ref{isomorphic+} to get 
	$$h_X(t)=\frac{\hat{h}_E(X)}{\hat{h}_E(Y)} \, h_Y(t)=h_{xY}(t)$$
for all $t$ in $B(\Kbar)$, where $x=\sqrt{\frac{\hat{h}_E(X)}{\hat{h}_E(Y)}}$. Our assumption \eqref{same height} then yields that $X=\pm xY$ as claimed.

\bigskip
\noindent\fbox{$\eqref{condition scheme} \iff \eqref{real nondegenerate}$}  Fix any collection of points $Q_1, \ldots, Q_{\ell}$ in $\Lambda$, and let $C$ be the irreducible curve in $\mathcal{E}^\ell$ defined by a section $(Q_1, \ldots, Q_{\ell})$ over $B$.  To say that $C$ is not contained in flat subgroup scheme of positive codimension means that the points $Q_1, \ldots, Q_{\ell}$ are linearly independent.  To say that the curve $C$ in $\mcE^\ell$ defined by $(Q_1, \ldots, Q_{\ell})$ intersects the tube $T(\mathcal{E}^{m,\{2\}},\epsilon)$ infinitely often for every $\epsilon>0$ means that there is an infinite non-repeating sequence of points $t_n \in B(\Kbar)$ and small points $q_{i,n} \in E_{t_n}(\Kbar)$ for each $n$ so that the points $\{Q_{1,t_n}- q_{1,n}, \ldots, Q_{\ell,t_n}- q_{\ell,n}\}$ satisfy two linear relations in $E_{t_n}$.  Therefore the equivalence of \eqref{condition scheme} and \eqref{real nondegenerate} is the statement of Proposition \ref{almost 2 relations}.  
	
This completes the proof of the theorem.

%%%%%%%%
\bigskip
\section{Equality of measures}
\label{measure section}
	
In this section we prove Theorem \ref{measures}, which is needed for our proof of Theorems \ref{Lambda} and \ref{scheme}.  We begin by introducing a complex-geometric perspective on the elements $X$ of the real vector space $E(k)\otimes \R$.  These points do not necessarily exist as algebraic curves in the elliptic surface $\mcE \to B$ but can be viewed as inducing foliations.
	
\subsection{Real points as holomorphic curves}  \label{foliation section}
Given a non-isotrivial elliptic surface $\mcE \to B$ defined over the number field $K$, we fix an embedding $K \hookrightarrow \C$, and let $S \subset B$ be a finitely-punctured Riemann surface so that all fibers $E_t(\C)$ are smooth for $t \in S(\C)$.  Write $\mcE_S$ for the open subset of $\mcE$ over $S$.  Recall that each rational point $P \in E(k)$ determines a holomorphic section of $\mcE\to B$ defined by $t \mapsto P_t \in E_t(\C)$ for $t \in S(\C)$. 
	
The Betti coordinates of $P \in E(k)$ are defined as follows.  Passing to the universal cover $\pi: \Stilde \to S$, there is a holomorphic period function 
	$$\tau:  \Stilde \to \Hyp$$
taking values in the upper half plane, so that the fibers of $\mcE_S$ satisfy
	$$E_{\pi(s)}(\C) \iso \C/ (\bZ\oplus \bZ\tau(s))$$
for all $s \in \Stilde$.  Passing to the universal cover of $E_{\pi(s)}(\C)$ for each fiber, we obtain a holomorphic line bundle over $\Stilde$, trivialized by sending generator 1 of the lattice to $1 \in \C$.  For each $P \in E(k)$, the corresponding section of $\mcE\to B$ lifts to a holomorphic function 
	$$\xi_P:  \Stilde \to \C.$$
The Betti map of $P$ is the real-analytic map $\beta_P: \Stilde \to \R^2$ given by 
	$$\beta_P(s) = (x(s), y(s)) \quad\mbox{ such that }\quad  \xi_P(s) = x(s) + y(s) \, \tau(s).$$   
The coordinates $x$ and $y$ themselves depend on the choices of $\tau$ and $\xi_P$, but as proved in \cite{CDMZ}, we have 
\begin{equation} \label{Betti measure}
	\omega_P = dx \wedge dy,
\end{equation}
independent of the choices, for the curvature distribution of $\Dbar_P$ at an archimedean place of $K$.  
	
Given $P \in E(k)$ and a fixed choice of $\xi_P$, and given a nonzero integer $n$, the holomorphic function 
		$$\xi := \frac{1}{n} \, \xi_P$$
will represent a point $Q\in E(\kbar)$ satisfying $n \, Q = P$.  It descends to a holomorphic curve in $\mcE_S$ that is not necessarily a section over $S$.  Translating $\xi$ by elements of $\frac{1}{n}(\bZ\oplus \bZ\tau)$, we find all curves corresponding to solutions $Q$ of $n \, Q = P$.  More generally, we find that every element of $E(k)\otimes \R$ can be represented by a family of holomorphic curves in $\mcE_S$, as follows: 
	
\begin{proposition} \label{foliation}
Fix a choice of period function $\tau: \Stilde \to \Hyp$, and suppose that $P_1, \ldots, P_m \in E(k)$ provide a basis for $E(k) \otimes \R$.  Then there exist Betti coordinates for each $X= \sum_i x_i P_i  \in E(k) \otimes \R$, given by
		$$\beta_X(s) = (x_X(s), y_X(s)) = \sum_i x_i \,\beta_{P_i}(s)  + (a,b) $$
for $s \in \Stilde$, for any choices of Betti coordinates $\beta_{P_i}$ for the points $P_i$ and any constant $(a,b) \in \R^2$, so that the curvature distribution for $\Dbar_X$ at an archimedean place of $K$ satisfies
		$$\omega_X = dx_X \wedge dy_X$$
on $S$. 
\end{proposition}
	
Recall here that the curvature distribution for $\Dbar_X$ (at the given archimedean place) was defined in  \eqref{real point divisor} by
	$$\omega_{X} \; =  \; 
	\sum_i \left(x_i^2 - \sum_{j\not=i} x_ix_j \right)  \omega_{P_i} + \sum_{i < j} x_ix_j \; \omega_{P_i + P_j}, $$
	with $P_i\in E(k)$.

\begin{remark}  
Given $X \in E(k)\otimes \R$, the family of holomorphic functions $\xi_X(s) := x_X(s) + y_X(s)\, \tau(s)$ of Proposition \ref{foliation} projects to a family of holomorphic curves in the complex surface $\mcE_S$.  For torsion points of $E(k)$ representing the 0 of $E(k)\otimes \R$, the holomorphic curves given by Proposition \ref{foliation} are precisely the leaves of the Betti foliation, because we allow for arbitrary translation of $\beta_X$ in $\R^2$.  By definition, the leaves of the Betti foliation have constant Betti coordinates; see, e.g., \cite{ACZ:Betti}, \cite{CDMZ}, and \cite{UU2} for more information.  For each nonzero $X \in E(k)\otimes\R$, there is a corresponding foliation of $\mcE_S$.  When an element $X$ is represented by $P \in E(k)$, the foliation is simply the corresponding Betti foliation for the elliptic surface with $P$ chosen as the zero section.
\end{remark}
	
\begin{proof}[Proof of Proposition \ref{foliation}]
Let $A_n$ be a sequence in $E(k)$ such that $Q_n := \frac{1}{n} A_n$ converges to $X$ in $E(k) \otimes \R$ as $n \to \infty$.  We can select the holomorphic lifts $\xi_{A_n}: \Stilde \to \C$ and choices of $\xi_{Q_n}$ so that the sequence of holomorphic functions $\xi_{Q_n}$ converges, locally uniformly in $\Stilde$.  This defines a limit holomorphic function $\xi_X$.  In terms of a basis $P_1, \ldots, P_m$ of $E(k)$, we can assume that $Q_n = \frac{1}{n} \left( a_{n,1} P_1 + \cdots + a_{n,m} P_m\right)$  for integers $a_{n,i}$, with $a_{n,i}/n \to x_i \in \R$ as $n\to \infty$.  We see that $\xi_X  - \sum_i x_i \xi_{P_i}$ must be an element of $\R\oplus \R\tau$.  Making other choices for $\xi_{A_n}$ and $\xi_{Q_n}$, we can obtain all possible translates of $\xi_X$ by elements of $\R\oplus \R\tau$; in other words, we can define Betti coordinates for $X$, up to translation by elements of $\R^2$.  Fix a choice of $\beta_X = (x_X, y_X)$ and consider the measure $\nu_X = dx_X \wedge dy_X$.  This measure is clearly independent of the choices.  Furthermore, it is the weak limit of measures $\omega_{Q_n}$ on $S$, by the formula \eqref{Betti measure} for $\omega_{Q_n}$ and local uniform convergence of $\xi_{Q_n}$ to $\xi_X$.  We already know that $\omega_{Q_n} \to \omega_X$ for the curvature distributions (at a fixed archimedean place), from the definitions given in \S\ref{R divisor subsection}.  It follows that $\nu_X = \omega_X$. 
\end{proof}

\subsection{Proof of Theorem \ref{measures}}
Fix $X_1, X_2 \in E(k) \otimes \R$, and let $\Dbar_1$ and $\Dbar_2$ be the associated metrized $\R$-divisors on $B$, defined over the number field $K$.  Fix an archimedean place of $K$, and let $\omega_1$ and $\omega_2$ be the curvature measures on $B(\C)$ at this place.  We assume that $\omega_1 = \omega_2$.   As in \S\ref{foliation section}, we fix a period function $\tau: \Stilde \to \Hyp$.   From Proposition \ref{foliation}, there exist holomorphic functions $\xi_i = x_i + y_i \tau$, $i = 1,2$, representing the points $X_1$ and $X_2$, so that
\begin{equation} \label{m1}
	dx_1 \wedge dy_1 = dx_2 \wedge dy_2
\end{equation}
on $\Stilde$.  
	
We break the proof into two steps.  In the first, we exploit the holomorphic-antiholomorphic trick of \cite[\S5]{ACZ:Betti}, applied to a relation between holomorphic functions $\xi_1$, $\xi_2$, $\tau$ (and their derivatives) and the anti-holomorphic functions $\bar\xi_1$, $\bar\xi_2$, and $\bar\tau$ (and their derivatives) coming from \eqref{m1}; the result is a relation on the holomorphic input alone.  In the second step, we apply the transcendence result of \cite[Th\'eor\`eme 5]{Bertrand} to this relation and deduce that the points $X_1$ and $X_2$ must be linearly related in $E(k) \otimes \R$.  
	
\medskip
\noindent{\bf Step 1:  Holomorphic-Antiholomorphic.}  We are grateful to Lars K\"uhne for teaching us this step.
	
Note that 
	$$d\xi_i = dx_i + y_i \, d\tau + \tau \, dy_i$$
so that 
	$$(d\xi_i - y_i d\tau)\wedge (d\bar\xi_i - y_i d\bar\tau) = (\bar\tau - \tau) \, dx_i \wedge dy_i.$$
Writing 
	$$y_i = \frac{\xi_i - \bar\xi_i}{\tau - \bar\tau}$$
we obtain a relation from \eqref{m1} expressed as 
\begin{multline*}
( (\tau-\bar\tau) d\xi_1 - (\xi_1-\bar\xi_1) d\tau) \wedge ( (\tau-\bar\tau) d\bar\xi_1 - (\xi_1-\bar\xi_1) d\bar\tau)  \\  = ( (\tau-\bar\tau) d\xi_2 - (\xi_2-\bar\xi_2) d\tau) \wedge ( (\tau-\bar\tau) d\bar\xi_2 - (\xi_2-\bar\xi_2) d\bar\tau)
\end{multline*}
as forms on $\tilde{S}$.
	
Working in coordinates in the simply connected $\Stilde$, this gives 
\begin{multline} \label{explicit relation}
\left( \xi_1' \overline{\xi_1'} - \xi_2' \overline{\xi_2'} \right) (\tau-\bar\tau)^2 - \left( (\xi_1 - \overline{\xi_1}) \xi_1' - (\xi_2 - \overline{\xi_2}) \xi_2'\right) (\tau - \bar\tau) \overline{\tau'}   \\  -    \left( (\xi_1 - \overline{\xi_1}) \overline{\xi_1'} - (\xi_2 - \overline{\xi_2}) \overline{\xi_2'} \right) (\tau - \bar\tau) \tau' + \left( (\xi_1 - \overline{\xi_1})^2 - (\xi_2 - \overline{\xi_2})^2 \right) \tau' \overline{\tau'}  \;\; = \;\;  0
\end{multline}
as functions on $\tilde{S}$.  Equation \eqref{explicit relation} can be expressed as
	$$\sum_{j=1}^N f_j(z) g_j(z) \equiv 0$$
for holomorphic functions $f_j \in \bZ[\xi_1, \xi_2, \xi_1', \xi_2', \tau, \tau']$ and antiholomorphic functions $g_j \in \bZ[\overline{\xi_1}, \overline{\xi_2}, \overline{\xi_1'}, \overline{\xi_2'}, \overline{\tau}, \overline{\tau'}]$ in $z\in \Stilde$. 
	
For each $j$, define holomorphic function 
	$$\hat{g}_j(w) := g_j(\bar w).$$
Then 
\begin{equation} \label{holomorphic function}
	F(z,w) := \sum_{j=1}^N f_j(z) \hat{g}_j(w)
\end{equation}
is holomorphic on $\Stilde\times \Stilde$ and vanishes identically on the real-analytic subvariety $\{w = \bar z\}$, where it coincides with \eqref{explicit relation}.  It follows that $F$ must vanish identically on $\Stilde \times \Stilde$; see \cite[Lemma 5.2]{ACZ:Betti}.  In particular, if we fix any $w_0 \in \Stilde$, we have $F(z, w_0) \equiv 0$ on $\Stilde$, and we obtain a polynomial relation in the holomorphic functions $\xi_1, \xi_2, \xi_1', \xi_2', \tau, \tau'$ that holds on all of $\Stilde$.

\medskip
\noindent{\bf Step 2:  Algebraic Independence.}
Suppose that $P_1, \ldots, P_m \in E(k)$ define a basis for $E(k)\otimes \R$, so that 
	$$X_i = \sum_{j = 1}^m a_{i, j} P_j$$
for $a_{i,j} \in \R$, $i = 1,2$.  From Proposition \ref{foliation}, we know that we can choose $\xi_i$ to satisfy 
	$$\xi_i = \sum_j a_{i,j} \xi_{P_j}$$
for choices of lifts $\xi_{P_j}$ of each point $P_j$.  From Step 1, for each $w_0 \in \Stilde$, the function \eqref{holomorphic function} satisfies $F(\cdot, w_0) \equiv 0$ on $\Stilde$, giving a polynomial relation on the holomorphic functions 
	$$\xi_{P_1}, \ldots, \xi_{P_m}, \xi_{P_1}', \ldots, \xi_{P_m}', \tau, \tau'$$
with real coefficients. 
But the functions $\xi_{P_j}$ come from the linearly independent algebraic points $P_j \in E(k)$ in the non-isotrivial $E$ and so satisfy the hypothesis of Th\'eor\`eme 5 in \cite{Bertrand}.  As a consequence of \cite[Th\'eor\`eme 5]{Bertrand}, a nontrivial polynomial relation $F(\cdot, w_0) \equiv 0$ on the functions $\xi_{P_j}$ and their derivatives $\xi_{P_j}'$ (with coefficients in the field $\C(\tau, \tau')$) implies that the points $P_j$ must themselves satisfy a nontrivial linear relation.  But this would contradict our assumption that the $P_i$ form a basis for $E(k)\otimes \R$, so we conclude that the polynomial relation must have been trivial. In other words, for any choice of $w_0$, the coefficients of $F(z,w_0)$ -- as polynomials in $\xi_{P_1}, \ldots, \xi_{P_m}, \xi_{P_1}', \ldots, \xi_{P_m}'$ -- must vanish.  
	
Examining the relation \eqref{explicit relation}, we can determine these coefficients explicitly. 
 The ``constant" term, having no dependence on the $\xi_{P_j}$ or $\xi_{P_j}'$, gives 
	$$C_1(w_0)\, \tau' + C_2(w_0)\, \tau \tau' = 0$$
as a function of $z \in \Stilde$, with coefficients $C_1, C_2$ that are antiholomorphic functions of $w_0$ on $\Stilde$.  For fixed $w_0$, if $C_1(w_0)$ or $C_2(w_0)$ is nonzero, this would imply that $\tau$ is constant, which is absurd because the elliptic surface $\mcE\to B$ is non-isotrivial.  This implies that $C_2(w_0) = 0$ for all $w_0$.  But, again looking at the formula from \eqref{explicit relation}, we have 
	$$C_2(w_0) = \overline{\xi_1'}(w_0) \overline{\xi_1}(w_0) - \overline{\xi_2'}(w_0) \overline{\xi_2}(w_0) = 0$$
for all $w_0$.  Taking complex conjugates, we have 
	$$0 \equiv \xi_1' \xi_1 - \xi_2' \xi_2 = \sum_{j, \ell=1}^m( a_{1,j}a_{1,\ell} - a_{2,j}a_{2,\ell}) \xi_{P_j}' \xi_{P_\ell}. $$
In other words, we find another relation on the holomorphic functions $\xi_{P_j}'$ and $\xi_{P_\ell}$ which must therefore be trivial \cite[Th\'eor\`eme 5]{Bertrand}.  We conclude that either
	$$a_{1, j} =  a_{2, j}$$ 
for all $j$ or that 
	$$a_{1, j} = -  a_{2, j}$$ 
for all $j$.  In other words,
	$$X_1 = \pm X_2.$$
This completes the proof of Theorem \ref{measures}.

%%%%%%%%
\bigskip
\section{Proofs of the main theorems}
\label{final proofs}
	
In this section, we prove our main theorems.
	
\subsection{Proof of Theorem \ref{Lambda}}
Recall that the N\'eron-Tate height $\hat{h}_E$ on $E(k)$ extends to a positive definite quadratic form on $E(k)\otimes \R$ because $E$ is non-isotrivial.  It follows (by Cauchy-Schwarz) that the N\'eron-Tate regulator 
	$$R_E(X,Y) := \hat{h}_E(X)\hat{h}_E(Y) - \<X,Y\>_E^2 \;\geq \; 0$$ 
extends to a biquadratic form on $E(k)\otimes\R$ satisfying $R_E(X,Y) = 0$ if and only if $X$ and $Y$ are linearly dependent over $\R$.  As 
	$$F(X,Y) := \Dbar_X \cdot \Dbar_Y$$
is also biquadratic on $E(k) \otimes\R$ (see Proposition \ref{propertiesofpairing}) and satisfies $F(X,X)=0$ for all $X \in E(k)\otimes \R$, the upper bound on $\Dbar_X \cdot \Dbar_Y$ in Theorem \ref{Lambda} follows.

From Theorem \ref{TFAE}, we know that Theorem \ref{Lambda} holds for $\mcE\to B$ if and only if $\Dbar_X \cdot \Dbar_Y \not=0$ for all pairs of linearly independent $X, Y \in E(k)\otimes \R$.  So assume we have nonzero elements $X, Y \in E(k) \otimes \R$ satisfying $\Dbar_X \cdot \Dbar_Y =0$.  By scaling $X$ and $Y$, we may assume that $\hat{h}_E(X) = \hat{h}_E(Y) = 1$.  We proved in Theorem \ref{real point divisor theorem} that $\Dbar_X$ and $\Dbar_Y$ are normalized, semipositive, continuous adelic metrizations on $\R$-divisors on $B$, each on divisors of degree 1.  Theorem \ref{isomorphic} then implies that $\Dbar_X$ and $\Dbar_Y$ are isomorphic, so the curvature forms for $\Dbar_X$ and for $\Dbar_Y$ on $B_v^{\an}$ must coincide at all places $v$ of the number field $K$.  Fixing a single archimedean place, we deduce from Theorem \ref{measures} that $X = \pm Y$.  This completes the proof.  
	
\subsection{Proof of Theorem \ref{scheme}}
Suppose that $C$ is an algebraic curve in $\mcE^m$ that dominates the base curve $B$.  Passing to a finite branched cover $B' \to B$, we may view $C$ as a section $C'$ of the $m$-th fibered power of the pull-back elliptic surface $\mcE' \to B'$.  As Theorem \ref{Lambda} holds for $\mcE'\to B'$, we apply Theorem \ref{TFAE} to conclude that the intersection of $C'$ with the tube $T((\mcE')^{m, \{2\}}, \epsilon)$ is contained in a finite union of flat subgroup schemes of positive dimension, for all sufficiently small $\epsilon>0$.  Projecting back to $\mcE^m\to B$, we can make the same conclusion about the intersection of $C$ with $T(\mcE^{m, \{2\}}, \epsilon)$.  This completes the proof.

%%%%%%
%%%%%%
%%%%%%
	
\bigskip
\appendix
	
\section{Arithmetic equidistribution for $\R$-divisors}
In this Appendix, we show that an equidistribution law holds on projective varieties defined over a number field, for adelic semipositive metrizations $\Dbar$ associated to an ample $\R$-divisor.  Formal definitions, extending those we provided for curves in Section \ref{R-divisors}, appear in \cite[Chapters 2 and 4]{Moriwaki:Memoir}. (Note that our definition of $D$-Green function differs from the one in \cite{Moriwaki:Memoir} by a factor of $2$.) Theorem \ref{two bundles} and Corollary \ref{equidistribution} extend the equidistribution theorems of Chambert-Loir, Thuillier, and Yuan \cite{ChambertLoir:equidistribution, Thuillier:these, Yuan:equidistribution} for adelically metrized line bundles to $\R$-divisors.  Our proofs follow a known strategy for equidistribution; we mimic the presentation of Chambert-Loir and Thuillier in \cite{ChambertLoir:Thuillier}, while they appeal to results of Yuan \cite{Yuan:equidistribution} and Zhang \cite{Zhang:adelic}, building on the ideas that originally appeared in \cite{Szpiro:Ullmo:Zhang}.  See also  \cite{Yuan:survey}.  We provide the details for completeness.  The key ingredient for passing from $\bQ$-divisors to $\R$-divisors is the continuity of the arithmetic volume function on the space of metrized of $\R$-divisors, proved by Moriwaki \cite[Theorem 5.3.1]{Moriwaki:Memoir}.

\begin{theorem}  \label{two bundles}
Let $X$ be a normal and geometrically integral projective variety of dimension $d\geq 1$ over a number field $K$.  Fix an ample $\bR$-divisor $D$ on $X$, equipped with a continuous, relatively nef, adelic metrization $\Dbar$ over $K$, satisfying $\widehat{\deg}(\Dbar^{d+1}) = 0$.  Let $\Mbar$ be an integrable adelic metrization on an $\R$-divisor $M$ over $K$.  For any generic sequence $x_n\in X(\Kbar)$ with $h_{\Dbar}(x_n) \to 0$, we have
	$$h_{\Mbar}(x_n) \to \frac{\widehat{\deg}( \Dbar^d \, \Mbar)}{\mathrm{vol}(D)}.$$
\end{theorem}
	
A sequence $\{x_n\} \subset X(\Kbar)$ is generic if every subsequence is Zariski dense.  The arithmetic notions of relatively nef and integrable are defined in \cite[\S4.4]{Moriwaki:Memoir}, and the multilinear, symmetric intersection form $\widehat{\deg}(\Dbar_1\cdots \Dbar_{d+1})$ is defined in \cite[\S4.5]{Moriwaki:Memoir}.  The intersection coincides with the arithmetic intersection number denoted by $c_1(\Lbar_1)\cdots c_1(\Lbar_{d+1})$ in \cite{Zhang:adelic} when $\Dbar_i$ is the metrized divisor associated to an adelically metrized line bundle $\Lbar_i$; see Remark \ref{intersectionvsdeg}.

For curves $X$, the hypothesis on $\Dbar$ in Theorem \ref{two bundles} simplifies in the language of Section \ref{R-divisors} to a continuous, semipositive, and normalized metrization.  We have $\widehat{\deg}( \Dbar^d \, \Mbar) = \Dbar \cdot \Mbar$ as defined in \eqref{pairing}.  
	
\begin{cor}  \label{equidistribution}
Let $X$ be a normal and geometrically integral projective variety of dimension $d\geq 1$ over a number field $K$.  Fix an ample $\bR$-divisor $D$ on $X$, equipped with a continuous, relatively nef adelic metrization $\Dbar$ over $K$, satisfying $\widehat{\deg}(\Dbar^{d+1}) = 0$.  For each place $v$ of $K$ and for any generic sequence $x_n\in X(\Kbar)$ with $h_{\Dbar}(x_n) \to 0$, the discrete probability measures
	$$\mu_n = \frac{1}{|\Gal(\Kbar/K)\cdot x_n|} \sum_{y \in \Gal(\Kbar/K)\cdot x_n}  \delta_y$$
converge weakly in $X^{\an}_v$ to the probability measure 
	$$\mu_{\Dbar,v} = \frac{1}{\vol(D)} \, c_1(\Dbar)_v^d$$ %\omega_{\Dbar,v}.$$
\end{cor}
	
\noindent 
Here, $X_v^{\an}$ denotes the Berkovich analytification of the variety $X$ over the complete and algebraically closed field $\C_v$.  The measure $c_1(\Dbar_1)_v\cdots c_1(\Dbar_d)_v$ is defined in \cite{ChambertLoir:equidistribution} for integrable, adelically metrized line bundles on $X$, and the definition extends to $\R$-divisors by multilinearity.  For curves $X$, we have $d=1$ and $c_1(\Dbar)_v = \omega_{\Dbar,v}$ as defined in \S\ref{metrizations}.
	
\subsection{Essential minima}  \label{d minima}
Let $X$ be a normal and geometrically integral projective variety of dimension $d\geq 1$ over a number field $K$.  For any $\R$-divisor $D$ on $X$ defined over $K$, we set 
	$$H^0(X, D) = \{\phi \in K(X):  (\phi) + D \geq 0\} \cup \{0\}.$$
For ample $D \in \mathrm{Div}_{\bZ}(X)$, the volume of $D$ is  
	$$\vol D = \lim_{k\to\infty} \frac{d!}{k^d} \dim H^0(X, kD).$$
For a $\bQ$-divisor $D$, the volume can be defined by taking the limit along sequences where $kD \in \mathrm{Div}_{\bZ}(B)$.  The volume extends continuously to $\R$-divisors; see, for example, \cite[Theorem 2.2.44]{Lazarsfeld:Positivity:I}. 

As in \S\ref{minima} and following \cite{Zhang:adelic}, the essential minimum of the height $h_{\Dbar}$ is defined as 
	$$	e_1(\Dbar) := \sup_{Y} \inf_{x \in (X\setminus Y) (\Kbar)} h_{\Dbar}(x),$$
over all Zariski closed proper subsets $Y$ in $X$ of codimension 1, and we put 
	$$e_{d+1}(\Dbar) := \inf_{x\in X(\Kbar)} h_{\Dbar}(x).$$

\begin{theorem} \cite[Theorem 1.10]{Zhang:adelic} \label{Zhang inequality d} 
For any adelic, semipositive metrization $\Dbar$ of an ample $\R$-divisor $D$ on $X$, we have
	$$e_1(\Dbar) \geq \; \frac{\widehat{\deg}(\Dbar^{d+1})}{(d+1) \vol D} \;  \geq \frac{1}{d+1} \left( e_1(\Dbar) + d\, e_{d+1}(\Dbar) \right)$$
\end{theorem}
	
\begin{proof}
Zhang proved the result for ample line bundles equipped with adelic, semipositive metrics \cite[Theorem 1.10]{Zhang:adelic}.  It holds also for metrizations of $\R$-divisors because the height function associated to an $\R$-divisor is a uniform limit of heights associated to $\bQ$-divisors, and the intersection number is multilinear and the volume $\vol(D)$ is continuous.
\end{proof}

\subsection{Arithmetic volume}  Let $X$ be a normal and geometrically integral projective variety of dimension $d\geq 1$ over a number field $K$.  	The {\bf arithmetic volume} of an adelically metrized $\R$-divisor $\Dbar$ is defined as follows.  We first fix a family of norms on $H^0(X,D)$ by 
$$\|\phi\|_{\mathrm{sup},v} = \displaystyle\sup_{x \in X_v^{\an}\setminus\supp D} |\phi(x)|_v e^{-g_v(x)},$$
for each place $v$ of $K$.  
Set
$$\chi(\Dbar) = - \log \frac{ \mu((H^0(X,D)\otimes {\bf A}_K)/H^0(X,D))}{ \mu(\prod_v U_v)}$$
where ${\bf A}_K$ is the ring of adeles, $\mu$ is a Haar measure on $H^0(X,D)\otimes {\bf A}_K$, and $U_v$ is the unit ball in $H^0(X,D)\otimes \C_v$ in the induced norm.  Then
	$$\widehat{\mathrm{vol}}_\chi(\Dbar) := \limsup_{k\to\infty}  \frac{(d+1)!}{k^{d+1}} \, \chi(k\Dbar).$$
	
In  \cite[Theorem 5.2.1]{Moriwaki:Memoir}, Moriwaki proves that $\widehat{\mathrm{vol}}_\chi$ defines a continuous function on a space of continuous, adelic metrizations on $\R$-divisors.  As a consequence, he shows that for relatively nef metrizations, we have $\widehat{\mathrm{vol}}_\chi(\Dbar) = \widehat{\deg}(\Dbar^{d+1})$ \cite[Theorem 5.3.2]{Moriwaki:Memoir}.  
Therefore, Zhang's inequality (Theorem \ref{Zhang inequality d}) implies that 
\begin{equation}  \label{Zhang positive d}
	e_1(\Dbar) \geq \widehat{\mathrm{vol}}_\chi(\Dbar)/( (d+1) \vol D).
\end{equation} 
for all continuous, semipositive, adelic metrizations of $\R$-divisors on $B$.

\begin{remark}  This volume function $\widehat{\mathrm{vol}}_\chi$ is defined differently than the one studied by Moriwaki in \cite{Moriwaki:Memoir}, but they coincide.  See, for example, Appendix C.2 of \cite{Bombieri:Gubler} and the discussion on page 615, for the comparison of an adelic volume to a Euclidean volume.  
\end{remark}
	
\begin{proposition} \label{strong Zhang}
For all integrable adelic metrizations on an ample $\R$-divisor $D$, we have 
	$$e_1(\Dbar) \geq \frac{\widehat{\mathrm{vol}}_\chi(\Dbar)}{(d+1) \, \vol D}$$  
\end{proposition} 
	
\begin{proof}
From \eqref{Zhang positive d}, the inequality holds for relatively nef $\Dbar$. For integrable metrics, we write $\Dbar = \Dbar_1- \Dbar_2$ for relatively nef $\Dbar_i$ and approximate each $\Dbar_i$ with relatively nef adelic metrics on $\bQ$-divisors $\Dbar_{i,n}$ as $n\to \infty$.  Because $D$ is ample, we can assume that $D_{1,n} - D_{2,n}$ is ample for all $n$.  In that setting, we apply \cite[Lemme 5.1]{ChambertLoir:Thuillier}. The result then follows by uniform convergence of the resulting height functions, so that $e_1$ is continuous, and by continuity of the volume function $\widehat{\mathrm{vol}}_\chi$  \cite[Theorem 5.2.1]{Moriwaki:Memoir} and of the classical volume.
\end{proof}

\subsection{Proof of Equidistribution}  
	
\begin{proof}[Proof of Theorem \ref{two bundles}]
Fix an ample $\R$-divisor $D$, equipped with an adelic, relatively nef metrization $\Dbar$ for which $\widehat{\deg}(\Dbar^{d+1}) = 0$.  Let $x_n \in X(\Kbar)$ be a generic sequence with $h_{\Dbar}(x_n) \to 0$.  
		
Assume first that $\Mbar$ is an adelic, {\em arithmetically nef} metrization on an ample $\R$-divisor $M$, meaning that $\Mbar$ is relatively nef and the height $h_{\Mbar}$ is non-negative at all points of $X(\Kbar)$; see \cite[\S4.4]{Moriwaki:Memoir}.  For each positive integer $m$, by Zhang's inequality (Theorem \ref{Zhang inequality d}) applied to $(m\Dbar) + \Mbar$, we have 
	$$\liminf_{n\to\infty} \left( m h_{\Dbar}(x_n) + h_{\Mbar}(x_n)\right) \geq \frac{\deghat((m\Dbar + \Mbar)^{d+1})}{(d+1)\vol(m\, D +  M)} = \frac{ (d+1)m^d \,\deghat(\Dbar^d\,\Mbar) + O(m^{d-1}) }{(d+1)\vol(m\, D +  M)}$$
from the multilinearity of the intersection number and because $\deghat(\Dbar^{d+1}) = 0$.
As the sequence $x_n$ is small for $\Dbar$, this gives 
	$$\liminf_{n\to\infty} h_{\Mbar}(x_n) \geq \frac{ (d+1)m^d\, \deghat(\Dbar^d\,\Mbar) + O(m^{d-1}) }{(d+1)\vol(m\, D +  M)}$$
for all $m$.  Letting $m$ go to $\infty$, we obtain
\begin{align}\label{Zhanglower}
	\liminf_{n\to\infty}  h_{\Mbar}(x_n) \geq  \frac{\deghat(\Dbar^d \, \Mbar)}{\vol D}.
\end{align}
		
For the reverse inequality, we choose $m$ large enough so that $mD - M$ is ample.  We can therefore apply Proposition \ref{strong Zhang} to obtain
	$$\liminf_{n\to \infty} ( m h_{\Dbar}(x_n) - h_{\Mbar}(x_n))  \geq  \frac{ \widehat{\mathrm{vol}}_\chi \left(m\Dbar- \Mbar\right)}{(d+1)\, \vol(mD-M)}$$
so that 
\begin{equation} \label{limsup}
	- \limsup_{n\to \infty} h_{\Mbar}(x_n)  \geq  \frac{ \widehat{\mathrm{vol}}_\chi \left(m\Dbar- \Mbar\right)}{(d+1)\, \vol(mD-M)}
\end{equation}
Fix a place $v_0$ of the number field $K$ and $c\in \R$, and let $\Dbar_c$ denote $\Dbar + (0, \{g_v\})$ where $g_{v_0}(x) \equiv c/r_v$ and $g_v(x)\equiv 0$ for all $v \not= v_0$; recall that $r_v$ was defined in \eqref{rv}.  Choosing $c$ large enough, we can assume that $\Dbar_c$ is arithmetically nef.  It follows that 
	$$\volhat_\chi(m\Dbar_c-\Mbar) \geq m^{d+1} \deghat(\Dbar_c^{d+1}) - (d+1) m^d \deghat(\Dbar_c^d \,\Mbar),$$
combining \cite[Theorem 2.2]{Yuan:equidistribution} with the continuity of $\volhat_\chi$ \cite[Theorem 5.2.1]{Moriwaki:Memoir}; see also \cite[Lemme 5.2]{ChambertLoir:Thuillier}.  
		
But note that 
	$$\volhat_\chi(m\Dbar_c-\Mbar) = \volhat_\chi (m\Dbar - \Mbar) + (d+1)\, c\, m  \vol(mD-M)$$
from the definition of $\volhat_\chi$.  Consequently, 
\begin{eqnarray*}
	\widehat{\mathrm{vol}}_\chi \left(m\Dbar- \Mbar\right) &\geq& m^{d+1} \deghat(\Dbar_c^{d+1}) - (d+1) m^d \deghat(\Dbar_c^d \,\Mbar) -  (d+1)c \, m  \vol(mD-M) \\
	&=&  m^{d+1}\left(  \deghat(\Dbar^{d+1}) + c(d+1)\vol(D)\right) \\
	&&  \quad - \; (d+1)m^d\left( \deghat(\Dbar^d \, \Mbar) + d\,c \, c_1(D)^{d-1}c_1(M) \right) \\
	&& \quad - \; (d+1)c \, m \vol(mD-M) \\
	&=& -(d+1)m^d \,\deghat(\Dbar^d\, \Mbar) + O(m^{d-1})
\end{eqnarray*}
with the last equality because $\deghat(\Dbar^{d+1}) = 0$.  Compare \cite[Proposition 5.3]{ChambertLoir:Thuillier}.  
		
Therefore, \eqref{limsup} gives
	$$ \limsup_{n\to \infty} h_{\Mbar}(x_n)  \leq - \frac{\widehat{\mathrm{vol}}_\chi (m\Dbar- \Mbar)}{(d+1) \vol(mD-M)} \leq \frac{(d+1)m^d \,\deghat(\Dbar^d\, \Mbar) + O(m^{d-1})}{(d+1) \vol(mD-M)}$$
for all sufficiently large $m$.  Letting $m\to \infty$, we obtain the desired upper bound of 
\begin{align}\label{Yuanupper}
	\limsup_{n\to \infty} h_{\Mbar}(x_n)  \leq \frac{\deghat(\Dbar^d \, \Mbar)}{\vol D}.
\end{align}
Putting the two inequalities \eqref{Zhanglower} and \eqref{Yuanupper} together, we have
	$$\lim_{n\to\infty} h_{\Mbar}(x_n) = \frac{\deghat(\Dbar^d \, \Mbar)}{\vol D}.$$
		
Now suppose that $\Mbar$ is integrable.  By definition, we can write $\Mbar = \Mbar_1 - \Mbar_2$ for relatively nef $\Mbar_i$ on ample divisors $M_i$.  By adding and subtracting the trivial divisor with constant metric, we can assume that each $\Mbar_i$ is arithmetically nef, and we apply the result above to each $\Mbar_i$.  We have $h_{\Mbar} = h_{\Mbar_1} - h_{\Mbar_2}$ and $\deghat(\Dbar^d \, \Mbar) = \deghat(\Dbar^d \, \Mbar_1) - \deghat(\Dbar^d \, \Mbar_2)$.  The theorem is a consequence of this linearity.  
\end{proof} 
	
\medskip
	
\begin{proof}[Proof of Corollary \ref{equidistribution}]
	Fix a place $v\in M_K$, and let $\phi$ be a smooth real-valued function on $X_v^{\an}$.  By density as in \cite[Theorem 3.3.3]{Moriwaki:Memoir} it is enough to consider these functions.  We denote  by $\overline{O}_\phi$ the trivial divisor on $X$ equipped with the metrization given by $g_v = \phi$ and $g_w = 0$ for all $w\not= v$ in $M_K$.  This metrization is integrable.

Let $\mu_n$ denote the probability measure in $X^{\an}_v$ supported uniformly on the Galois conjugates of $x_n$.  Note that
	$$h_{\overline{O}_\phi}(x_n)= r_v \int_{X^{\an}_v}\phi \, d\mu_{n}$$
by the definition of the height function, where $r_v = [K_v:\bQ_v]/[K:\bQ]$.  We have 
	$$\deghat(\Dbar^d \, \overline{O}_\phi)= r_v\int_{X^{\an}_v}\phi \; c_1(\Dbar)_v^d.$$
Applying Theorem \ref{two bundles} to $\Mbar=\overline{O}_\phi$, we get that 
	$$\lim_{n\to\infty}h_{\overline{O}_\phi}(x_n)= \frac{r_v}{\vol D} \int_{X^{\an}_v}\phi \; c_1(\Dbar)_v^d = r_v \int_{X^{\an}_v}\phi \,d\mu_{\Dbar,v},$$
demonstrating weak convergence of $\mu_n$ to $\mu_{\Dbar,v}$ in $B^{\an}_v$.
\end{proof}

%%%%%%
%%%%%%
%%%%%%
%%%%%%

\bigskip \bigskip
%\bibliographystyle{../tex/bib/math}
%\bibliography{../tex/bib/math}

\def\cprime{$'$}

\bigskip\bigskip

\end{document}